\newcommand{\bC}{\mathbb{C} \xspace}
\newcommand{\bR}{\mathbb{R} \xspace}
\newcommand{\bQ}{\mathbb{Q} \xspace}
\newcommand{\bF}{\mathbb{F} \xspace}
\newcommand{\Fpbar}{\overline{\mathbb{F}}_{p} \xspace}
\newcommand{\bZ}{\mathbb{Z} \xspace}
\newcommand{\bN}{\mathbb{N} \xspace}
\newcommand{\Qp}{\mathbb{Q}_p \xspace}
\newcommand{\Qpbar}{\overline{\mathbb{Q}}_p \xspace}
\newcommand{\Zp}{\mathbb{Z}_p \xspace}
\newcommand{\Ql}{\mathbb{Q}_{\ell} \xspace}
\newcommand{\frk}{\mathfrak}
\newcommand{\Gal}{\mathrm{Gal} \xspace}
\newcommand{\Fix}{\mathrm{Fix} \xspace}
\newcommand{\Stab}{\mathrm{Stab} \xspace}
\newcommand{\transp}[2]{( #1 \; #2)} 
\newcommand{\ev}{\mathrm{ev} \xspace}
\newcommand{\id}{\mathrm{id} \xspace}
\newcommand{\End}{\mathrm{End} \xspace}
\newcommand{\Hom}{\mathrm{Hom} \xspace}
\newcommand{\Sym}{\mathrm{Sym} \xspace}
\newcommand{\Spec}{\mathrm{Spec} \xspace} 
\newcommand{\CH}{\mathrm{CH} \xspace}
\newcommand{\CHM}{\mathrm{CHM} \xspace}
\newcommand{\bUn}{\mathds 1 \xspace}
\newcounter{todocount} 
\newtheorem{theorem}{Theorem}[section]
\newtheorem{proposition}[theorem]{Proposition}
\newtheorem{lemma}[theorem]{Lemma}
\newtheorem{conjecture}[theorem]{Conjecture}
\newtheorem{`conjecture'}[theorem]{``Conjecture''}
\newtheorem{corollary}[theorem]{Corollary}
\theoremstyle{definition}
\newtheorem{definition}[theorem]{Definition}
\newtheorem{notation}[theorem]{Notations}
\newtheorem{remark}[theorem]{Remark}
\title{Examples for the standard conjecture of Hodge type}
\author{Thomas AGUGLIARO}
\date{}
\begin{document}
\maketitle

\begin{abstract}
   For each prime number $p$ and each integer $g \geqslant 4$, we construct infinitely many abelian varieties of dimension $g$ over $\Fpbar$ satisfying the standard conjecture of Hodge type. The main tool is a recent theorem of Ancona \cite{anc21}. These varieties are constructed explicitly through Honda--Tate theory. Moreover, they have Tate classes that are not generated by divisors nor liftable to characteristic zero. Also, we prove a result towards a classification of simple abelian varieties for which the result of \cite{anc21} can be applied to. Along the way, we prove results of independent interest about Honda-Tate theory and about multiplicative relations between algebraic integers.
\end{abstract}

\tableofcontents

\section{Introduction} \label{intro}

The goal of this paper is to prove the following result.
\begin{theorem} \label{thm_intro}
	For all prime numbers $p$ and all integers $g \geqslant 4$, there exists infinitely many abelian varieties of dimension $g$ over $\Fpbar$ satisfying the standard conjecture of Hodge type, and with Tate classes which are not Lefschetz and that do not come from specializing Hodge classes of CM-liftings.
\end{theorem}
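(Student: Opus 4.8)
I will use the theorem of Ancona in \cite{anc21} in the following form, as a \emph{sufficient criterion}: there is an explicit condition, call it $(\star)$, on the Weil number $\pi$ of a simple abelian variety over a finite field --- formulated in terms of the factorisation of the characteristic polynomial of Frobenius together with the $p$-adic slopes of $\pi$ --- such that $(\star)$ implies the standard conjecture of Hodge type for the corresponding abelian variety, and such that $(\star)$ is insensitive to base change to $\overline{\mathbb{F}}_p$ and to isogeny. The plan is then: for each prime $p$ and each $g\geqslant 4$, produce infinitely many pairwise non-isogenous simple abelian varieties over $\overline{\mathbb{F}}_p$ of dimension $g$ satisfying $(\star)$ and carrying a Tate class which is neither Lefschetz nor the specialisation of a Hodge class of a CM lifting.

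The first step is a reduction. It suffices to fix $g$ and to construct, over a suitable finite field $\mathbb{F}_q$ with $q=p^a$, a \emph{single} simple abelian variety $A_\pi$ of dimension $g$ with $\mathrm{End}^0(A_\pi)=\mathbb{Q}(\pi)$ a CM field of degree $2g$, chosen so that $A_\pi$ stays simple over $\overline{\mathbb{F}}_p$, that $A_\pi$ satisfies $(\star)$, and that $A_\pi$ has the exotic-class property below; since all the relevant properties are isogeny invariants, one may work up to isogeny, which is where Honda--Tate theory lives. The "infinitely many'' clause then follows by running the construction over an infinite set of exponents $a$ (or with infinitely many choices of $\pi$) and checking pairwise non-isogeny \emph{over $\overline{\mathbb{F}}_p$} --- for which it is enough to arrange, say, pairwise distinct Newton polygons or pairwise non-isomorphic CM fields $\mathbb{Q}(\pi)$.

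The core is a Honda--Tate construction in which $\pi$ is chosen so that, simultaneously: (i) $\mathbb{Q}(\pi)$ is a CM field of degree $2g$ and no proper power of $\pi$ generates a smaller field, so that $A_\pi$ is absolutely simple of dimension $g$; (ii) the $p$-adic slopes attached to $\pi$ put $A_\pi$ in the range of Ancona's criterion $(\star)$ --- this is where the classification result announced in the abstract enters, isolating which $\pi$ the theorem of \cite{anc21} actually covers; and (iii) the Galois conjugates of $\pi$ satisfy enough multiplicative relations that the Frobenius torus $T_\pi\subseteq\operatorname{Res}_{\mathbb{Q}(\pi)/\mathbb{Q}}\mathbb{G}_m$ is strictly smaller than the Lefschetz group of $A_\pi$. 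Condition (iii) is precisely what forces a Tate class of "Weil type'' in middle degree that does not lie in the subalgebra generated by divisor classes; the existence of a Weil number meeting (i), (ii) and (iii) at once is the Honda--Tate statement of independent interest referred to in the abstract. To finish, for such $\pi$ one analyses CM liftings: any CM lift of $A_\pi$, up to isogeny, corresponds to a CM type $\Phi$ on $\mathbb{Q}(\pi)$ compatible with the slopes of $\pi$ via the Shimura--Taniyama formula, and its Hodge classes specialise exactly to the invariants of the associated reflex torus $T_\Phi$. One would show that for \emph{every} admissible $\Phi$ the torus $T_\Phi$ is strictly larger than $T_\pi$, hence has strictly fewer invariants, and then exhibit a $T_\pi$-invariant Tate class lying outside every such invariant subspace and outside the Lefschetz algebra --- the class demanded by \Cref{thm_intro}. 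The restriction $g\geqslant 4$ is unavoidable here, since on abelian varieties of dimension $\leqslant 3$ all Tate classes are Lefschetz, so this exotic structure simply does not exist below dimension four.

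The step I expect to be the real obstacle is reconciling (ii) with (iii): making the Frobenius torus very degenerate tends to force supersingular-type slope behaviour, which is exactly the kind of situation Ancona's criterion is likely to exclude, so one must thread the needle and find $\pi$ whose conjugates are only \emph{just} degenerate enough. A second, closely related difficulty is that the final step has to rule out \emph{all} CM liftings, not merely the evident one, which amounts to a delicate comparison of $T_\pi$ with the whole family of admissible reflex tori; carrying this through uniformly in $g\geqslant 4$ and $p$, while keeping infinitely many pairwise non-isogenous examples, is the technical heart of the argument.
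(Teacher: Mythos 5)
Your broad strategy --- Honda--Tate construction of a Weil number $\pi$, Ancona's rank-2 theorem as the positivity input, and a degeneracy of the Frobenius action to force exotic Tate classes that a CM lift cannot carry --- is the same skeleton as the paper. But there are two genuine problems with the plan as written.

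First, the reduction to \emph{simple} abelian varieties of every dimension $g\geqslant 4$ is not available. Ancona's theorem (\Cref{thm_ancona}) demands motives of rank $2$, and the paper proves (\Cref{second_theorem}) that a geometrically simple abelian variety whose exotic submotives all have rank $\leqslant 2$ necessarily has \emph{even} dimension. So the condition $(\star)$ you posit, whatever it is, cannot be satisfied by a simple abelian variety of odd dimension, and the step ``it suffices to fix $g$ and construct a single simple $A_\pi$ of dimension $g$'' fails for $g$ odd. The paper's own route for odd $g\geqslant 5$ is not to produce a simple example but to take $A = A'\times E$ with $A'$ simple of even dimension $g-1$ and $E$ a supersingular elliptic curve, pulling back the non-liftable Tate class from $A'$ (Corollary 3.4). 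Your stated reason that $g\geqslant 4$ is forced because ``on abelian varieties of dimension $\leqslant 3$ all Tate classes are Lefschetz'' is not the constraint that is actually operative; the real constraint in this method is the even-dimensionality of simple mildly exotic varieties.

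Second, the criterion $(\star)$ is doing all the work and is never pinned down. Ancona's theorem is stated for a rank-2 Chow motive over $\mathcal{O}_K$ whose Betti realization is polarized and whose special fibre is $2$-dimensional in algebraic classes; translating this into a checkable condition on $\pi$ and its multiplicative relations is a substantial part of the paper (\Cref{motive_decomposition}, \Cref{exoset_exomot}, \Cref{lemma_relations}, and the definition of ``mildly exotic''). In particular one must show that the \emph{only} exotic direct summand has rank $2$, which requires knowing the full Galois group of $\widetilde L/\mathbb{Q}$ and a combinatorial analysis of which $\prod_{i\in I}\pi_i=q^{\#I/2}$ relations can hold; the paper arranges $\Gal(\widetilde R/\mathbb{Q})=S_g$ precisely for this. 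You flag this as the ``technical heart'' but do not say how the needle is threaded; as it stands the construction could produce a $\pi$ whose exotic summands have rank $>2$, in which case Ancona's theorem does not apply. Relatedly, your proposal to achieve pairwise non-isogeny by varying Newton polygons is in tension with condition (iii): the slopes of $\pi$ are constrained by the multiplicative relations you need for exoticity (in the paper $v(\pi)=1/g$ is forced), so the safe route is varying the CM field $L$, which is what \Cref{fields_construction} does.

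Your approach to the non-liftability of the exotic Tate class (compare the Frobenius torus $T_\pi$ to the Mumford--Tate torus of each admissible CM lift) is in the right spirit, but the paper's argument is more concrete and more robust: it applies the Shimura--Taniyama formula (\Cref{S-T_formula}) to any CM lift $\mathcal{B}$ and directly computes the Hodge type of the rank-2 motive $M_{\langle I_0\rangle}$ to be $(g/2+1,\,g/2-1)\oplus(g/2-1,\,g/2+1)$ (\Cref{hodge_type_computation}), so it visibly carries no Hodge classes regardless of the CM type chosen. This avoids quantifying over all admissible reflex tori and handles at one stroke the point you flag as ``a delicate comparison.''
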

We will recall the standard conjecture of Hodge type in \Cref{prelim}. As a non-trivial instance, we have the following corollary. Here $\mathcal Z_{\mathrm{num}}^n(X)$ will denote the $\bQ$-vector space of algebraic cycles of codimension $n$ on a variety $X$ modulo numerical equivalence and $\rho_n$ will denote its dimension.
\begin{corollary}
	For all prime numbers $p$ and all even integers $g \geqslant 4$, there exists infinitely many abelian varieties $A$ of dimension $g$ over $\Fpbar$ such that the intersection pairing on $g/2$-cycles
	\begin{align*}
		\mathcal Z_{\mathrm{num}}^{g/2}(A) \times \mathcal Z_{\mathrm{num}}^{g/2}(A) &\to \bQ \\
		\alpha, \beta &\mapsto \alpha \cdot \beta
	\end{align*}
has signature $(s_+, s_-)$ with 
	\begin{align*}
		s_+ + s_- = \rho_{g/2} \; \; \text{ and } \; \;
		s_+ = \rho_0 - \rho_1 + \rho_2 - \rho_3 + \dots + (-1)^{g/2} \rho_{g/2}.
	\end{align*}
\end{corollary}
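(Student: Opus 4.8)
The plan is to deduce the corollary formally from \Cref{thm_intro} together with the fact that the Lefschetz standard conjecture is known for abelian varieties. Applying \Cref{thm_intro} to an even integer $g\geqslant 4$ furnishes infinitely many abelian varieties $A$ of dimension $g$ over $\overline{\mathbb F}_p$ for which the standard conjecture of Hodge type holds; I fix one such $A$ together with a polarization, i.e.\ an ample class $L\in\mathcal Z^1_{\mathrm{num}}(A)$, and it remains to compute the signature of the intersection pairing on $\mathcal Z^{g/2}_{\mathrm{num}}(A)$. Since $A$ is an abelian variety, the Lefschetz standard conjecture $B(A)$ holds (Lieberman, Kleiman); combined with the nondegeneracy of the numerical intersection pairings, $B(A)$ yields the hard Lefschetz isomorphisms $L^{\,g-2n}\colon\mathcal Z^n_{\mathrm{num}}(A)\xrightarrow{\ \sim\ }\mathcal Z^{g-n}_{\mathrm{num}}(A)$ for $0\leqslant n\leqslant g/2$ --- in particular $\rho_n=\rho_{g-n}$ and $L\colon\mathcal Z^n_{\mathrm{num}}(A)\hookrightarrow\mathcal Z^{n+1}_{\mathrm{num}}(A)$ for $n<g/2$ --- hence the primitive (Lefschetz) decomposition
\[
  \mathcal Z^{g/2}_{\mathrm{num}}(A)=\bigoplus_{m=0}^{g/2}L^{\,g/2-m}\cdot P^m,\qquad P^m:=\ker\!\big(L^{\,g-2m+1}\colon\mathcal Z^m_{\mathrm{num}}(A)\to\mathcal Z^{g-m+1}_{\mathrm{num}}(A)\big),
\]
with $\dim P^0=\rho_0$ and $\dim P^m=\rho_m-\rho_{m-1}$ for $1\leqslant m\leqslant g/2$.

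Next I would analyse the pairing $(\alpha,\beta)\mapsto\alpha\cdot\beta$ on $\mathcal Z^{g/2}_{\mathrm{num}}(A)$ through this decomposition. It is orthogonal for the pairing --- a standard consequence of the relations $L^{\,g-2m+1}P^m=0$ --- so the signature is the sum of the signatures of the restrictions to the summands $L^{\,g/2-m}\cdot P^m$. On such a summand, writing $\alpha=L^{\,g/2-m}\alpha'$ and $\beta=L^{\,g/2-m}\beta'$ with $\alpha',\beta'\in P^m$, one has $\alpha\cdot\beta=\alpha'\cdot\beta'\cdot L^{\,g-2m}$, and the standard conjecture of Hodge type in codimension $m$ says precisely that $(-1)^m\,\alpha'\cdot\beta'\cdot L^{\,g-2m}$ is positive definite on $P^m$. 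Hence the restriction of the pairing to $L^{\,g/2-m}\cdot P^m$ is definite of sign $(-1)^m$ and of rank $\dim P^m$ (equal to $\rho_m-\rho_{m-1}$, resp.\ $\rho_0$ when $m=0$).

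Summing over $m$ then gives $s_+=\sum_{\substack{0\leqslant m\leqslant g/2\\ m\ \mathrm{even}}}\dim P^m$ and $s_-=\sum_{\substack{0\leqslant m\leqslant g/2\\ m\ \mathrm{odd}}}\dim P^m$; adding these two yields $s_++s_-=\dim\mathcal Z^{g/2}_{\mathrm{num}}(A)=\rho_{g/2}$, while substituting $\dim P^0=\rho_0$, $\dim P^m=\rho_m-\rho_{m-1}$ and telescoping the first sum produces the alternating expression for $s_+$ asserted in the statement. Since \Cref{thm_intro} supplies infinitely many such $A$ for each even $g\geqslant 4$, the corollary follows. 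The only substantive ingredient is \Cref{thm_intro} itself; everything afterwards is the linear algebra of the Lefschetz decomposition combined with the Hodge index statement, so I expect no genuine obstacle --- the single point deserving a little care is the deduction of hard Lefschetz and of the primitive decomposition for cycles modulo numerical equivalence from $B(A)$, for which I would appeal to Kleiman's treatment of the standard conjectures.
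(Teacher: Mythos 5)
Your approach is the natural one and, indeed, the only reasonable one: combine \Cref{thm_intro} with the Lefschetz standard conjecture $B(A)$ (known for abelian varieties by Lieberman and Kleiman) to obtain hard Lefschetz on $\mathcal Z^{\star}_{\mathrm{num}}(A)$, take the primitive decomposition of $\mathcal Z^{g/2}_{\mathrm{num}}(A)$, and apply the Hodge index statement on each summand. The paper states the corollary without proof, so there is no separate argument to compare against, but this is evidently the intended deduction and everything up to the last line is correct.

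The final telescoping, however, does not reproduce the stated formula when $g/2$ is odd, and you assert that it does. What your argument actually yields is
\[
  s_+ \;=\; \sum_{\substack{0\leqslant m\leqslant g/2\\ m\ \text{even}}} \dim \mathcal P^m,
\]
which, after substituting $\dim\mathcal P^0=\rho_0$ and $\dim\mathcal P^m=\rho_m-\rho_{m-1}$, telescopes to $\rho_0-\rho_1+\cdots+\rho_{g/2}$ when $g/2$ is even, but to $\rho_0-\rho_1+\cdots+\rho_{g/2-1}$ when $g/2$ is odd, because the sum over even $m$ then stops at $m=g/2-1$. The displayed formula in the corollary carries the extra term $(-1)^{g/2}\rho_{g/2}=-\rho_{g/2}$ in the odd case; since $\rho_0\leqslant\rho_1\leqslant\cdots$ from the injectivity of cupping with $L$ on $\mathcal Z^{<g/2}_{\mathrm{num}}$, that formula then gives a nonpositive value for $s_+$, and it is incompatible with $s_++s_-=\rho_{g/2}$. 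So the claim that the telescoping ``produces the alternating expression asserted in the statement'' fails for odd $g/2$; your write-up should flag this parity discrepancy (it appears to be a slip in the corollary itself) rather than silently assert the match. Everything else, including the deduction of hard Lefschetz and the primitive decomposition for $\mathcal Z^{\star}_{\mathrm{num}}$ from $B(A)$ via Kleiman, is fine.
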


The standard conjecture of Hodge type was formulated in 1968 by Grothendieck \cite{Gro68}, in a paper in which he proved that together with the other standard conjectures, it would imply the Weil conjecture. But these standard conjectures turned out to be more difficult than the Weil conjecture, which was proven by Deligne in 1974. 

By a specialization argument \cite[Remarque 5.3.2.2]{and04}, if the standard conjecture of Hodge type is proven for all varieties (resp. all abelian varieties) over $\overline \bF_p$ where $p$ is a fixed prime number, then the conjecture for varieties (resp. abelian varieties) over all fields of characteristic $p$ would follow. Moreover, the case of abelian varieties is crucial as the category of motives over $\overline \bF_p$ should be generated by abelian varieties \cite[Proposition 7.3.3.3]{and04}.

When the conjecture was formulated, the evidences were for divisors, as a consequence of a theorem of Segre \cite{seg37}, and in characteristic $0$ by the Hodge--Riemann relations \cite[Theorem 6.33]{voi02}. Since then, the only results on the standard conjecture of Hodge type are rather recent. Milne proved in \cite{mil02} that the conjecture is valid for cycles that are linear combinations of intersections of divisors (so called Lefschetz cycles). In \cite{anc21}, Ancona proved the conjecture for abelian fourfolds, introducing tools to handle some cases with non-Lefschetz cycles. In \cite{kos22}, Koshikawa proves the conjecture for the squares of simple abelian varieties of prime dimensions, using the tool developped by Ancona. Using different techniques, Milne builds examples of non-simple abelian varieties satisfying the conjecture in \cite{mil22}. In \cite{IIK} Ito, Ito and Koshikawa proved the conjecture for self-products of $K3$ surfaces using the Kuga--Satake construction to reduce to the case of abelian varieties. 
	
	 In \Cref{WT}, we highlight a new class of abelian varieties which we will call \textit{mildly exotic abelian varieties}. Roughly speaking, these are abelian varieties with a few more Tate classes than Lefschetz classes, but for which it is still possible to prove the standard conjecture of Hodge type using \cite{anc21}. A first part of the article is devoted to prove that there are many such mildly exotic abelian varieties. 
	\begin{theorem} \label{thm_intro2}
		Mildly exotic abelian varieties over $\overline \bF_p$ satisfy the standard conjecture of Hodge type. Moreover, for each prime number $p$, and each even integer $g \geqslant 4$, there exist infinitely many abelian varieties $A$ over $\overline \bF_p$ of dimension $g$ which are simple and mildly exotic, and such that there are Tate classes in $A$ which do not come from the specialization of a Hodge class of a CM-lifting of $A$.
	\end{theorem}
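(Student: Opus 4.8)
I would prove the two assertions of the statement separately.

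For the first, that every \emph{mildly exotic} abelian variety $A$ over $\overline \bF_p$ satisfies the standard conjecture of Hodge type, the plan is to use the reformulation recalled in \Cref{prelim}: the conjecture for $A$ is a Hodge--Riemann positivity statement for the cup-product pairing on the primitive part of the $\bQ$-algebra of Tate classes of $A$. By the structure of a mildly exotic variety established in \Cref{WT}, this algebra is spanned by Lefschetz classes together with one small ``exotic'' piece, and since the Lefschetz decomposition is orthogonal for the relevant pairing it suffices to check positivity on the Lefschetz part and on the exotic part separately. On the Lefschetz part this is Milne's theorem \cite{mil02}. The exotic part is, by design, small enough and of the right shape that after Lefschetz twists it sits inside the $H^4$ of an abelian fourfold, so that the criterion of Ancona \cite{anc21} supplies the positivity there, transporting it from a characteristic-zero lifting of that fourfold.

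For the second assertion, fix a prime $p$ and an even integer $g \geqslant 4$ and work inside Honda--Tate theory. The point is to find a CM field $E$ with $[E:\bQ] = 2g$ and a $q$-Weil number $\pi \in E$ (for some $q = p^n$) such that $\bQ(\pi) = E$ and $A_\pi$ is ordinary --- which forces $A_\pi$ to be simple of dimension $g$ with $\End^0(A_\pi) = E$ --- and such that the Galois conjugates $\sigma(\pi)$ of $\pi$ satisfy exactly one multiplicative relation beyond the tautological ones $\sigma(\pi)\bar\sigma(\pi) = q$; equivalently, the Frobenius torus $T_\pi$ has codimension one in the Lefschetz group of $A_\pi$. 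Such an extra relation, of the shape $\sigma_1(\pi)\sigma_2(\pi) = \sigma_3(\pi)\sigma_4(\pi)$ with the four embeddings suitably generic, forces a Frobenius-invariant line in $\wedge^4 H^1(A_\pi)$ which one checks is not a Lefschetz class, so $A_\pi$ is mildly exotic. The existence of such a $\pi$ inside a well-chosen $E$, with the prescribed conjugacy relation and the prescribed $p$-adic valuations, is exactly what our result on Honda--Tate theory is meant to provide; running this over an infinite family of CM fields $E$ --- say $E = K \cdot L$ with $K$ imaginary quadratic and $L$ totally real of degree $g$ varying, with $p$ having a fixed splitting pattern --- produces infinitely many $A_\pi$, pairwise non-isogenous over $\overline \bF_p$ since the fields $\End^0(A_\pi) = E$ are pairwise non-isomorphic. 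To see that the exotic class is not the specialization of a Hodge class of a CM-lifting, note that any CM-lifting of $A_\pi$ to characteristic zero has CM by $E$ with some CM type $\Psi$, and that the Shimura--Taniyama relation constrains the admissible $\Psi$ in terms of the $p$-adic valuations of $\pi$ (in the ordinary case, in terms of the splitting of $p$ in $E$); I would choose the decomposition group of $p$ so that every admissible $\Psi$ is non-degenerate, i.e.\ has Mumford--Tate torus equal to the Lefschetz group of $A_\pi$, whose cohomological invariants are exactly the Lefschetz classes. Under the comparison isomorphisms, specialization sends a Hodge class to the Tate class cut out by the same character of $\mathrm{Res}_{E/\bQ}\mathbb{G}_m$; the character producing our exotic class is trivial on $T_\pi$ but, by non-degeneracy, nontrivial on the Mumford--Tate torus of every admissible $\Psi$, hence it is not realised by a Hodge class on any CM-lifting.

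The main obstacle is the heart of the second assertion: arranging, for every even $g \geqslant 4$ and along an infinite family at once, Weil numbers whose Frobenius conjugates satisfy \emph{exactly one} extra multiplicative relation --- none would leave $A_\pi$ purely Lefschetz and so without the required exotic class, while more than one would take us outside the range where Ancona's fourfold method applies --- while simultaneously keeping $\bQ(\pi) = E$ so that $A_\pi$ is simple, keeping $A_\pi$ ordinary, and keeping all admissible CM types of $E$ non-degenerate. Balancing these constraints is precisely what the explicit Honda--Tate construction, together with a careful choice of $E$ and of how $p$ splits in it, is designed to accomplish.
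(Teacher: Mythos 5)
Your outline of the first assertion is essentially the paper's route: decompose $\mathfrak h(A)$ via the CM action (\Cref{motive_decomposition}), reduce by \Cref{SCHT_reduction} to the separate pieces, apply Milne (\Cref{SCHT_lefschetz}) to the Lefschetz summands and Ancona (\Cref{thm_ancona}) to the remaining rank-two exotic summands. One small inaccuracy: Ancona's criterion as used here (\Cref{thm_ancona}) applies directly to any rank-two relative Chow motive over $\mathcal O_K$ with a polarization on its Betti realization; there is no need to embed the exotic piece ``inside the $H^4$ of an abelian fourfold,'' which is both unnecessary and not what the paper does.

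The second assertion is where the proposal has a real gap. You insist on choosing $A_\pi$ \emph{ordinary}, but ordinarity makes the non-liftability requirement fail. For an ordinary CM abelian variety with CM field $E$, the Serre--Tate canonical lift is a CM lift whose CM type $\Psi$ satisfies $\Psi_v = H_v$ or $\Psi_v = \emptyset$ at each $p$-adic place $v$ of $E$, by the Shimura--Taniyama formula and the fact that all slopes are $0$ or $1$. Now take any exotic Tate class: it lives in a line $L_{I}$ over $\tilde L$ with the defining multiplicative relation $\prod_{i\in I}\pi_i = q^{\#I/2}$. Taking $p$-adic valuations, exactly half the $\pi_i$ with $i\in I$ have slope $1$ and half have slope $0$, so the Hodge type of $L_{I}$ on the canonical lift is $(\#I/2,\#I/2)$, i.e.\ it \emph{is} a Hodge class there. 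So in the ordinary setting every exotic Tate class is the specialization of a Hodge class from a CM lift, and the construction collapses. The paper avoids this by deliberately choosing a \emph{non-ordinary} Newton polygon: $p$ is inert in $Q$ and in $R$, so $L=Q\cdot R$ has exactly two places $v,\overline v$ over $p$, and the CM type is chosen (\Cref{CMtype_construction}) so that $v(\pi)=1/g$ and $\overline v(\pi)=1-1/g$. With these fractional slopes, the Shimura--Taniyama formula applied to an \emph{arbitrary} CM lift $\mathcal B$ forces (\Cref{hodge_type_computation}) the Hodge realization of $M_{\langle I_0\rangle}$, where $I_0=\{1,\dots,g\}$, to have type $(g/2+1,\,g/2-1)\oplus(g/2-1,\,g/2+1)$, which contains no Hodge class; this is what delivers the non-liftability. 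So the $p$-adic slopes must be chosen away from $\{0,1\}$, not toward it, and the constraint ``keep $A_\pi$ ordinary'' should be dropped entirely from your list.
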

	The tools for the proof of this theorem are:
	\begin{itemize}
		\item Honda--Tate theory which allows us to construct and study abelian varieties over $\overline \bF_p$.
		\item The theory of motives, which allows us to decompose the cohomology of abelian varieties in factors that are easier to study.
		\item Results of Milne \cite{mil02} and Ancona \cite{anc21} that allow us to study the motives coming from the previous point.
	\end{itemize}
	
	A second goal of the article is devoted to describe more concrete properties of mildy exotic abelian varieties, and classifying these varieties. 
\begin{theorem}
	Let $A$ be a mildly exotic simple abelian variety over $\overline \bF_p$. Then the dimension $g$ of $A$ is even, and there exists a central endomorphism $f$ of $A$ such that $f^2$ is induced by the multiplication by a negative integer.
\end{theorem}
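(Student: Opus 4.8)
The plan is to translate the cohomological definition of a \emph{mildly exotic} abelian variety given in \Cref{WT} into a statement about the division algebra $D := \mathrm{End}^0(A)$ and its centre $E$, using Honda--Tate theory and Tate's theorem on homomorphisms of abelian varieties over finite fields. Recall that, $A$ being simple over $\overline \bF_p$, the algebra $D$ is a division algebra whose centre $E$ is a number field, either totally real or CM; that $\mathfrak{h}^1(A)$ is a simple motive with endomorphism algebra $D$; and that $H := H^1(A,\mathbb{Q}_\ell)$ (taking $\ell$-adic étale cohomology for definiteness) is a free $E\otimes_{\mathbb{Q}}\mathbb{Q}_\ell$-module of rank $2g/[E:\mathbb{Q}]$, a standard consequence of the simplicity of $A$. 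The Tate classes of $A$ are the invariants of the Frobenius torus $\mathfrak{T}_A$ on $H^{\bullet}(A,\mathbb{Q}_\ell)$, the Lefschetz classes are the invariants of the larger Lefschetz group $L_A \supseteq \mathfrak{T}_A$, and the structure theory of \Cref{WT} tells us that $A$ being mildly exotic means precisely that the gap between the two is as small as it can be while $A$ is still not Lefschetz; concretely, the exotic (non-Lefschetz) Tate classes of $A$ are the Weil classes attached to a single imaginary quadratic field $K$ acting on $\mathfrak{h}^1(A)$.

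First I would locate $K$ inside the centre $E$. By the structure theory of \Cref{WT} --- this is where the hypothesis is really used --- $K$ acts on the simple motive $\mathfrak{h}^1(A)$ \emph{centrally}, that is $K$ embeds into $E$; in particular $E$ contains an imaginary quadratic subfield, so it is not totally real, hence is a CM field. Choosing for a generator of $K/\mathbb{Q}$ an element $f_0$ of trace zero --- possible because $K$ is imaginary quadratic --- one has $f_0^{\,2} = -N_{K/\mathbb{Q}}(f_0)$, a negative rational number, since the norm form of an imaginary quadratic field is positive definite. Replacing $f_0$ by a suitable positive integer multiple yields an endomorphism $f \in \mathrm{End}(A)$ lying in the centre of $\mathrm{End}(A)$ with $f^2 = [-n]$ for some positive integer $n$. (A polarisation enters only through the positivity of the Rosati involution, which forces it to act on the CM field $E$ as complex conjugation; this makes the preceding coherent and is why the trace-zero generator is the right choice.)

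Next I would deduce that $g$ is even. Through the embedding $K \subseteq E \subseteq D$ the module $H$ becomes a $K$-vector space, and the freeness over $E\otimes_{\mathbb{Q}}\mathbb{Q}_\ell$ recalled above pins its $K$-dimension down to $[E:K]\cdot\bigl(2g/[E:\mathbb{Q}]\bigr) = g$. Hence the Weil classes attached to $K$, built from the top $K$-exterior power of $H$, lie in $\bigwedge^{g}_{\mathbb{Q}_\ell} H = H^{g}(A,\mathbb{Q}_\ell)$ and span there a $\mathbb{Q}_\ell$-subspace of dimension $[K:\mathbb{Q}] = 2$. Since $A$ is mildly exotic, at least one of these Weil classes is a nonzero exotic Tate class, so $H^{g}(A,\mathbb{Q}_\ell)$ contains a nonzero Tate class; but that cohomology group has weight $g$, while Tate classes have even weight, so $g$ is even.

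The main obstacle is the first reduction itself: showing that mild exoticness genuinely confines the exotic Tate classes to the Weil classes of one imaginary quadratic field --- ruling out quaternionic (Mumford-type) excess and exotic classes distributed over several cohomological degrees --- \emph{and} that this field is central, and then upgrading the cohomological datum to an actual central endomorphism via Tate's theorem. That is where the definition of \Cref{WT} and the results of \cite{anc21} and \cite{mil02} carry the weight; once $f$ and $K$ are in hand, the sign of $f^2$ follows from positivity of the norm form of an imaginary quadratic field, and the parity of $g$ from the vanishing of Tate classes in odd cohomological weight.
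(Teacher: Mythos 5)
Your overall plan is the one the paper follows: locate an imaginary quadratic subfield of the centre of $\End^0(A)$, take a trace-zero generator $f_0$ so that $f_0^2 = -N_{K/\bQ}(f_0) < 0$, scale to an integral endomorphism, and read off the parity of $g$ from the degree in which the exotic (Weil--Tate) classes sit. The sign computation and the parity-from-weight observation are both correct and the latter is even slicker than the paper's cardinality count.

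However, there is a genuine gap, and you name it yourself without closing it: \emph{why} mild exoticness forces the exotic Tate classes to be the Weil classes of a single central imaginary quadratic field living in degree $g$ and nowhere else. This is not something that follows from ``the structure theory of \Cref{WT}'' as a black box --- it \emph{is} the structure theory of \Cref{WT}, i.e.\ precisely the content of \Cref{second_theorem}, which is what you are being asked to establish. Citing it is circular. The missing engine in the paper is \Cref{lemma_partition}: for any exotic index set $I$, transitivity of the Galois action on $\{1,\dots,2g\}$ (coming from simplicity of $A$) combined with the rank-$\leqslant 2$ constraint of \Cref{meav} (so the $G$-orbit of $I$ is exactly $\{I,\tau I\}$) forces $I\cup\tau I = \{1,\dots,2g\}$, hence $\#I = g$. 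That single lemma simultaneously localises the exotic classes to degree $g$, yields $g$ even, and, after a Galois-theoretic argument (\Cref{lemma_commutative}, \Cref{lemma_noncommutative}), produces the quadratic imaginary subfield $\Fix(I)$ inside the centre $\bQ(\pi)$ (or inside $L = \End^0(A)$ in the commutative case). Without this step your dimension count of $\Lambda^g_K H^1$ has nothing in it known to be a Tate class, and the embedding $K\hookrightarrow E$ is unsupported. One further small inaccuracy: in the commutative case of the paper's structure theorem there may be \emph{two} imaginary quadratic subfields and hence two exotic submotives, so ``a single imaginary quadratic field'' is not quite right, though this does not affect the construction of $f$.
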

This theorem gives us a better idea of how simple mildy exotic abelian varieties look like. It allows us to build many new interesting examples in the case where $g \equiv 2 \bmod 4$.
\begin{theorem} \label{nc_com-intro}
	For all $g \geqslant 6$ with $g \equiv 2 \bmod 4$ and all prime numbers $p$ there exist
	\begin{itemize}
		\item infinitely many simple abelian varieties $A$ over $\overline \bF_p$ of dimension $g$ whose algebra of endomorphisms is noncommutative.
		\item infinitely many simple abelian varieties $A$ over $\overline \bF_p$ of dimension $g$, whose algebra of endomorphisms is commutative and with at least $4$ independent exotic Tate classes (i.e the $\Ql$-subspace generated by these classes is of dimension $4$ and it does not contain a nonzero Lefschetz class).
	\end{itemize}
\end{theorem}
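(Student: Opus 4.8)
The plan is to construct every abelian variety required by the statement explicitly via Honda--Tate theory, feeding the Honda--Tate existence result proved earlier (the ``result about Honda--Tate theory of independent interest'') the right input: a CM field $E$ with prescribed splitting behaviour at $p$, together with prescribed slopes for a Weil number generating $E$. Given such a Weil number $\pi$, one reads the endomorphism algebra of the associated simple abelian variety, and in the second case its exotic Tate classes, directly off $\pi$. The hypothesis that $g'$ be odd is what makes the local computations at $p$ come out right; in both cases the resulting variety has even dimension $2g'$ and carries a central endomorphism $f$ with $f^{2}$ a negative integer, in accordance with the structure theorem above.

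For the first item, fix $p$ and an odd $g'\geqslant 3$ and take $E=K(\sqrt{-d})$, where $K$ is a fixed totally real field of degree $g'$ in which $p$ is inert (such $K$ exists for every $p$ and $g'$, for instance inside a suitable cyclotomic field) and $d$ ranges over the infinitely many squarefree positive integers, prime to $p$ and lying in pairwise distinct square classes of $K^{\times}$, for which the prime $\mathfrak p=p\mathcal O_{K}$ splits in $E/K$. Then $E$ is CM of degree $2g'$, it contains the imaginary quadratic field $\bQ(\sqrt{-d})$, and $p$ has exactly two primes $v,\bar v$ in $E$, interchanged by complex conjugation, with $[E_{v}:\Qp]=g'$. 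For each such $E$ the Honda--Tate existence result provides a Weil $p^{N}$-number $\pi$ with $\bQ(\pi^{k})=E$ for all $k\geqslant 1$ and with $v$-adic slope $v(\pi)/v(p^{N})=1/(2g')$ (hence the complementary slope at $\bar v$, since $\pi\bar\pi=p^{N}$); realising this slope on an actual valuation forces $N$ to be a suitable multiple of $2g'$ and, after clearing a totally positive unit by a norm from $E$ to its totally real subfield, a replacement of $\pi$ by $\pi^{2}$ times a unit. By Honda--Tate the associated simple abelian variety $A$ over $\overline{\bF}_p$ has $\mathrm{End}^{0}(A)$ the central division algebra over $E$ with local invariant $\tfrac{1}{2g'}\cdot g'=\tfrac12$ at each of $v,\bar v$ and $0$ elsewhere, that is, a quaternion algebra over $E$; since the slope, hence all local invariants, are unchanged under $\pi\mapsto\pi^{k}$ and $\bQ(\pi^{k})=E$ throughout, $A$ stays simple over $\overline{\bF}_p$ with this endomorphism algebra, and $2\dim A=2\cdot[E:\bQ]=4g'$. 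Distinct $d$ give non-isomorphic centres, hence non-isogenous $A$: infinitely many examples with noncommutative endomorphism algebra.

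For the second item I would run the same machine with $E$ now a CM field of degree $4g'$, still containing an imaginary quadratic subfield $\bQ(\sqrt{-n})$, but chosen in addition with a subfield or Galois structure forcing several independent multiplicative relations among the conjugates of the Weil number beyond the tautological relations $\tau(\pi)\,\overline{\tau(\pi)}=p^{N}$. Using the Honda--Tate existence result I would build a Weil $p^{N}$-number $\pi$ generating $E$, with $\bQ(\pi^{k})=E$ for all $k$, whose slopes at the primes above $p$ make every local invariant of $\mathrm{End}^{0}(A)$ vanish — so that $\mathrm{End}^{0}(A)=E$ is commutative and $\dim A=2g'$ — while the imposed relations yield Tate classes in the higher even cohomology of $A$ orthogonal to the span of divisor classes. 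That at least four such classes arise and are $\Ql$-linearly independent is then verified through the explicit description of the Tate classes of a CM abelian variety over $\overline{\bF}_p$ in terms of the Frobenius torus, i.e.\ the group of multiplicative relations among the conjugates of $\pi$ modulo roots of unity, together with the criterion established earlier for a Tate class to be Lefschetz. Varying $E$ (or $N$) again produces infinitely many non-isogenous examples; these are the announced new mildly exotic abelian varieties with commutative endomorphism algebra.

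The principal obstacle in both cases is the arithmetic input: producing a Weil number that realises exactly the prescribed slopes at $p$ while generating exactly the field $E$, and with $\bQ(\pi^{k})=E$ for every $k$ so that $A$ remains simple over $\overline{\bF}_p$. This is precisely what the Honda--Tate existence theorem delivers, and it demands controlling the ideal class of the relevant product $v^{a}\bar v^{b}$ — achieved by passing to a power of $\pi$ and clearing a totally positive unit — together with the genericity needed for $\bQ(\pi)=E$ rather than a proper CM subfield (one must, for instance, exclude $\pi^{k}$ falling into the imaginary quadratic subfield). For the second item there is the further delicacy of keeping every local invariant trivial while still forcing enough multiplicative relations, and of then certifying that the resulting classes are genuinely non-Lefschetz and $\Ql$-independent: this last step is exactly where the structural results on mildly exotic abelian varieties and the Lefschetz criterion proved earlier in the paper are indispensable.
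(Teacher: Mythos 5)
Your plan for the first bullet breaks down precisely on the point that the paper's ``result of independent interest'' (Proposition~\ref{weil_numbers_minimality}, or rather its Corollary~\ref{easy_case}) controls. You take $E=K(\sqrt{-d})$ of degree $2g'$ with $p$ inert in $K$ and $p\mathcal O_K$ split in $E/K$. For $E\otimes\Qp$ to split into two factors of degree $g'$ with $K_p$ a field, you are forced to have $p$ split in $\bQ(\sqrt{-d})$ (the inert and ramified cases give a single factor). But then each of the two primes of $\bQ(\sqrt{-d})$ above $p$ has a unique extension to $E$ (residue degree $g'=[E:\bQ(\sqrt{-d})]$), so Corollary~\ref{easy_case} applies to the inclusion $\bQ(\sqrt{-d})\subset E$ and forces $\pi^k\in\bQ(\sqrt{-d})$ for some $k$. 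Your claim that $\bQ(\pi^k)=E$ for all $k$ is therefore impossible in this setup. Over $\overline{\bF}_p$ the Frobenius field collapses to $\bQ(\sqrt{-d})$; the variety remains simple of dimension $2g'$, but $\End^0(A)$ is a division algebra of index $2g'$ over $\bQ(\sqrt{-d})$, not a quaternion algebra over a degree-$2g'$ field. By Proposition~\ref{lemma_noncommutative} such a variety cannot be mildly exotic, so while your examples satisfy the bare wording of the first bullet (which is not hard by itself), they miss entirely the structure the paper is after: exotic Tate classes in rank-$2$ pieces to which Theorem~\ref{thm_ancona} applies.

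The paper avoids this by going one degree higher: it builds a CM field $L$ of degree $4g'$ (Lemma~\ref{ramified_diagram}) whose local behavior at $p$ is carefully chosen so that the Frobenius generates exactly the subfield $F=Q\cdot R_0$ of degree $2g'$, with $[L:F]=2$. Crucially, two of the three quadratic subextensions of the biquadratic field $B$ are \emph{ramified} at $p$; this prevents the Weil number from being $p$-potentially in anything smaller than $F$, which is exactly what your unramified setup cannot arrange. The motive of $\mathcal A$ then carries a rank-$2$ exotic direct summand, the variety is mildly exotic, and its endomorphism algebra is quaternion over $F$.

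For the second bullet your description is too vague to count as a proof. The concrete ingredient you are missing is that $L$ must contain an imaginary \emph{biquadratic} subfield $B=Q\cdot Q'$ with $p$ inert in both imaginary quadratic fields $Q$ and $Q'$ (Lemma~\ref{nr_diagram}); then Corollary~\ref{easy_case} makes both $N_{L/Q}(\pi)$ and $N_{L/Q'}(\pi)$ powers of $q$, producing two distinct rank-$2$ exotic summands $\det_{Q}\frk h^1(\mathcal A)$ and $\det_{Q'}\frk h^1(\mathcal A)$, hence four independent exotic Tate classes. One also needs the analogue of Lemma~\ref{lemma_relations} (using the maximal Galois group of $R_0$) to certify that these are the \emph{only} exotic summands and are of rank $2$, so that the variety is mildly exotic. ``Forcing several independent multiplicative relations'' without pinning down where they come from and then bounding everything else does not establish this.
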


The proof of \Cref{nc_com-intro} involves the study of multiplicative relations between algebraic integers. This study can be done with two distinct computations for each cases of \Cref{nc_com-intro}, but there is a better explanation using representation theory. The purpose of \Cref{relation-representation} is to give a general result which covers both cases of \Cref{nc_com-intro}. It is of independent interest, and it uses the insight of Girstmair \cite{gir99} that one can extract information from the Galois module structure on the space of multiplicative relations.

The abelian varieties produced in \Cref{main_proof} for the proof of \Cref{thm_intro2} have even dimension $g$ and \textit{Frobenius rank} $g-1$ (for a definition, see \cite[\S 2.1]{DKZ}). In \cite{kos22}, Koshikawa deduced that \cite{anc21} can be applied to prove the standard conjecture of Hodge type for self-product of abelian varieties $A$ of dimension $g$ under the conditions that $g$ is odd and that the Frobenius rank is $g-1$. Moreover, he had examples of varieties satisfying his criterion in prime dimension. It is also possible to prove that the  examples of \Cref{nc_com-intro} produced in \Cref{proof_nc_com} have respectively Frobenius rank $g/2 - 1$ and $g-2$ where $g$ is the dimension of the abelian variety. Hence a finer analysis is needed in order to apply \cite{anc21}.

\subsection{Organization of the paper}
	In \Cref{prelim}, we give an overview of the tools we will be using in the paper, namely we recall the standard conjecture of Hodge type, and we review complex multiplication and motives. \Cref{section_main_result} contains the main result \Cref{main_result} and explains how to deduce \Cref{thm_intro} stated in the introduction from it. In \Cref{weil_numbers}, we compute the smallest field containing a power of a given Weil $q$-number, and we give a proof of \Cref{main_result} in \Cref{main_proof}. In \Cref{WT}, we define and study the structure of mildly exotic abelian varieties. In \Cref{relation-representation} we give a result concerning the multiplicative relations between Weil $q$-numbers, and we use it in \Cref{proof_nc_com} to construct different kinds of mildly exotic varieties (among which simple abelian varieties with noncommutative endomorphism ring).
\subsection{Conventions} \label{conventions}
	In this paper, we will follow the following conventions:
	\begin{enumerate}
		\item If $\mathcal A$ is an abelian scheme over a noetherian base $S$, we will consider the finite dimensional $\bQ$-algebra:
		\[\End^0(\mathcal A):= \End_{S}(\mathcal A) \otimes_{\bZ} \bQ.\]
		\item If $q$ is the power of a prime number $p$, a Weil $q$-number is an algebraic integer $\pi$ such that for all complex embeddings $\sigma: \bQ(\pi) \to \bC$ we have $|\sigma(\pi)| = q^{1/2}$. When we consider valuations on a number field $L$, with a given Weil $q$-number $\pi \in L$, a $p$-adic valuation $v$ will usually be normalized so that $v(q) = 1$.
		\item We will be using smooth Chow motives over a base $S=\Spec(B)$, where $B$ is either a field or the ring of integers $\mathcal O_K$ of a $p$-adic field $K$. We refer to \cite[\S 5.1]{OS11} for generalities about smooth Chow motives (in which they are called Chow motives). Let $F$ be a field of characteristic $0$, we will denote by $\CHM(S)_{F}$ the $F$-linear category of smooth Chow motives over $S$. If $X$ is smooth over $S$, $\mathfrak h(X)$ will be the smooth Chow motive attached to $X$. By a submotive of a motive $M$, we mean a direct summand of $M$.
		If $F'$ is an extension of $F$, and $M$ is in $\CHM(S)_{F}$, we will denote by $M_{F'}$ or $M \otimes_{F} F'$ the motive of $\CHM(S)_{F'}$ obtained from $M$ by extension of scalars.
		If $s: S' \to S$ is a morphism, we will denote by 
		\[s^*: \CHM(S)_F  \to \CHM(S')_F\]
		the functor extending the map $X \mapsto X \times_S S'$ from smooth schemes over $S$ to schemes over $S'$.
	\end{enumerate}
\subsection{Acknowledgements}
	I would like to thank G. Ancona for his many helpful comments during the writing of this article, E. Ambrosi, O. Benoist and F. Fité for useful discussions. I would also like to thank P. Laubie for his advice concerning \LaTeX.
	
	Finally I would like to thank the anonymous referee, who gave me useful suggestions and who motivated me to write \Cref{relation-representation}.
	
	This research was partly supported by the grant ANR--23--CE40--0011 of Agence National de la Recherche.

\section{Preliminaries} \label{prelim}
	We will recall the statement of the \textit{standard conjecture of Hodge type}. Then we will review the necessary notions about complex multiplication, Honda--Tate theory and motives. 

For detailed definitions about algebraic cycles and the standard conjectures, see \cite{and04} and \cite{Kl94}.
Let $X$ be a projective smooth irreducible variety of dimension $n$ over an algebraically closed field $k$. In this section, we fix a prime number $p$.

\begin{notation}
For all $0 \leqslant i \leqslant n$, we denote by $\mathcal Z^i(X)$ the $\bQ$-vector space of $\bQ$-algebraic cycles of codimension $i$, modulo numerical equivalence. That is $\bQ$-linear formal sums of irreducible algebraic subvarieties of $X$ of codimension $i$, such that a formal sum $S$ is identified with $0$ if $S \cdot Z = 0$ for all codimension $n-i$ algebraic subvariety $Z$ of $X$. 

We denote by $\CH^i(X)$ the $i$-th Chow group of $X$, that is the $\bQ$-vector space of $\bQ$-algebraic cycles of codimension $i$ modulo rational equivalence and by
\[ \mathrm{cl}^i: \CH^i(X)\otimes_{\bQ} \Ql \to H_{\ell}^{2i}(X)\]
the cycle class map to $\ell$-adic cohomology for any prime number $\ell$ different from the characteristic of $k$.

\end{notation}
The degree map canonically identifies $\mathcal Z^n(X)$ with $\bQ$.
Moreover, for all $0 \leqslant i \leqslant n/2$, we have a perfect pairing given by the intersection product:
$$\mathcal Z^i(X) \times \mathcal Z^{n-i}(X) \to \mathcal Z^n(X) \cong \mathbb Q.$$
Then for a given hyperplane section $L \in \mathcal Z^1(X)$, we have the \textit{Lefschetz operator}
\begin{align*}
	\mathcal Z^i(X) &\to \mathcal Z^{n-i}(X) \\
	Z &\mapsto Z \cdot L^{n-2i}.
\end{align*}
The \textit{primitive part} $\mathcal P^i(X)$ is defined to be the kernel of $Z \mapsto Z \cdot L^{n-2i+1}$, i.e. the map where one intersects with $L$ once more after the Lefschetz operator. For more context, see  \cite{and04}, \cite{Gro68} or \cite{Kl94}.
\begin{conjecture}[Standard conjecture of Hodge type \cite{Gro68}] \label{SCHT}
	For all $0 \leqslant i \leqslant n/2$, the pairing
	\begin{align*}
	\nu^i: \mathcal P^i(X) \times \mathcal P^i(X) &\longrightarrow \quad \quad \bQ \\
	(Z, Z') \; \quad &\mapsto (-1)^i Z \cdot Z' \cdot L^{n-2i}
	\end{align*}
	is positive definite.
\end{conjecture}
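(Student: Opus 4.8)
The plan is to reduce the general statement to two ``universal'' situations and treat those separately. The space $\mathcal Z^i(X)$, the intersection pairing, the Lefschetz operator and the primitive part $\mathcal P^i(X)$ are unchanged by extension of the base field: every cycle on $X$, and every intersection number between cycles, is already defined over a subfield of $k$ finitely generated over the prime field, and these data are preserved under further extension. So it suffices to prove the conjecture over an algebraically closed field: over $\bC$ when $\mathrm{char}\,k=0$, because a finitely generated field of characteristic zero embeds into $\bC$; and over $\overline \bF_p$ when $\mathrm{char}\,k=p$, by the specialization argument of \cite[Remarque 5.3.2.2]{and04}. Finally, since the conjecture concerns only the primitive part of $H^{2i}(X)$ with its cup-product form, over $\overline \bF_p$ one reduces further to the key case where $X$ is an abelian variety \cite[Proposition 7.3.3.3]{and04}.

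In characteristic zero the conjecture is a theorem, and I would recall its proof. Over $\bC$, hard Lefschetz (a theorem there) supplies the Lefschetz decomposition already on cycles modulo numerical equivalence and makes numerical and homological equivalence agree on primitive parts, so the cycle class map realizes $\mathcal P^i(X)$ as a $\bQ$-subspace of the primitive rational classes of type $(i,i)$ in $H^{2i}(X,\bQ)$, under which $(-1)^i\,Z\cdot Z'\cdot L^{n-2i}$ becomes $(-1)^i\int_X\alpha\cup\beta\cup L^{n-2i}$. The Hodge--Riemann bilinear relations \cite[Theorem 6.33]{voi02} assert that this symmetric form is positive definite on the full real space of primitive $(i,i)$-classes, hence in particular on the algebraic subspace; this is exactly the required statement.

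For $X$ an abelian variety $A$ over $\overline \bF_p$, the plan is to substitute for singular Hodge theory a combination of three inputs. First, Honda--Tate theory and the classification of $\End^0(A)$ cut the motive of $A$ into small pieces governed by the Weil $q$-number $\pi$ of $A$, reducing positive-definiteness to a statement about the quadratic form on the Tate classes of each piece. Second, positivity of the Rosati involution attached to the polarization gives the conjecture on the sub-form carried by Lefschetz classes (intersections of divisors), following Milne \cite{mil02}. Third, Ancona's theorem \cite{anc21}: matching the $\ell$-adic realization of such a piece with the Betti realization of a CM abelian variety lifting it to characteristic zero transports, via the characteristic-zero case applied to the lift, the Hodge--Riemann positivity of the lift onto those Tate classes of $A$ accounted for by that lift. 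One then verifies, piece by piece, that these three inputs cover every primitive class, and reassembles the local positivity into global positive-definiteness using that the intersection pairing respects the isotypic decomposition of $A$.

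The serious obstacle is this last step in positive characteristic. No crystalline or $p$-adic analogue of the Hodge--Riemann relations is known, so every positivity statement must be channelled either through Rosati positivity, which reaches only the Lefschetz part, or through characteristic-zero Hodge theory along a CM lift, which reaches only the Tate classes visible on that lift. The genuinely hard point is therefore the \emph{exotic} Tate classes: those orthogonal to all Lefschetz classes and not obtained by specializing a Hodge class of a CM lifting. The heart of the argument is, for a given abelian variety, to produce an auxiliary CM abelian variety (or family of them) whose Hodge classes explain these exotic classes with the right signs --- exactly the place where Ancona's mechanism must be pushed further. Singling out the abelian varieties where this can be done explicitly --- the ``mildly exotic'' ones studied in \Cref{WT}, which already cover Frobenius ranks beyond the range treated in \cite{kos22} --- is the first concrete step; carrying the same bookkeeping through all simple isotypic types and all Frobenius ranks is what the full statement calls for.
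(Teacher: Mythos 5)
The statement you have been asked to prove is the standard conjecture of Hodge type itself, which is open: the paper states it only as a conjecture and proves it solely for special classes of abelian varieties over $\overline \bF_p$ (\Cref{main_result}, \Cref{CSTH_me}), so there is no proof in the paper for your argument to be matched against, and your proposal does not close the gap either. Its first two paragraphs recall genuinely known material (the characteristic-zero case via the Hodge--Riemann relations \cite{voi02}, positivity on Lefschetz classes via \cite{mil02}), but the third paragraph is a programme rather than a proof, and your own last paragraph concedes the decisive point: for a general abelian variety over $\overline \bF_p$ the exotic Tate classes --- those orthogonal to Lefschetz classes and not explained by Hodge classes of a CM lift --- are not covered by any known positivity mechanism. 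Ancona's criterion (\Cref{thm_ancona}) requires the exotic piece to have rank $2$ with a two-dimensional space of algebraic classes and a polarized Betti realization; for arbitrary abelian varieties the exotic summands of \Cref{motive_decomposition} can have arbitrarily large rank, and ``producing an auxiliary CM abelian variety whose Hodge classes explain these exotic classes with the right signs'' is exactly the unsolved problem, not a step you carry out. This is why the paper restricts to mildly exotic varieties (\Cref{meav}), where the rank-$2$ condition is imposed by definition.

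Your preliminary reductions are also weaker than stated. The spaces $\mathcal Z^i(X)$ and $\mathcal P^i(X)$ are \emph{not} unchanged under extension of the base field: new algebraic classes can appear over a larger field, so positivity over an algebraic closure is a genuinely stronger statement, and the specialization argument of \cite[Remarque 5.3.2.2]{and04} only says that the conjecture for \emph{all} varieties (resp.\ all abelian varieties) over $\overline \bF_p$ implies it in characteristic $p$ --- it does not let you treat a single given $X$ by passing to $\overline \bF_p$ in isolation of the others. Likewise, the reduction of the general case to abelian varieties rests on the expectation that motives over $\overline \bF_p$ are generated by abelian varieties (the paper writes ``should be generated,'' citing \cite[Proposition 7.3.3.3]{and04}), and even granting that, transporting a positivity statement about the intersection form on primitive parts through such a motivic generation requires an argument (compatibility of polarizations and Lefschetz structures) that you do not supply. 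In short, the proposal is a reasonable survey of the state of the art, but it proves the conjecture in no case beyond those already covered by \cite{mil02}, \cite{anc21} and the constructions of this paper.
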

When formulated, this conjecture was known for divisors as a consequence of a theorem of Segre \cite{seg37} and in characteristic $0$ by the Hodge-Riemann relations \cite[Theorem 6.33]{voi02}. We will now recall a recent result concerning a class of cycles for which the conjecture is known.

\begin{definition}[Lefschetz cycles]
	Let us denote by $\mathcal L^*(X)$ the sub $\bQ$-algebra of $\mathcal Z^*(X)$ generated by $\mathcal Z^1(X)$, and by $\mathcal L^i(X)$ the $\bQ$-vector space $\mathcal L^*(X) \cap \mathcal Z^i(X)$ for $0 \leqslant i \leqslant n$. An element of $\mathcal L^*(X)$ is called a Lefschetz cycle.
\end{definition}
The following result has been proved by Milne in \cite[Remark 3.7]{mil02}, and another proof can be found in \cite[Proposition 5.4, Proposition 6.8]{anc21}:
\begin{proposition} \label{SCHT_lefschetz}
	If $X$ is an abelian variety over $k$, the pairing $\nu^i$ defined in \Cref{SCHT} is positive definite when restricted to $\mathcal L^i(X) \cap \mathcal P^i(X)$.
\end{proposition}

\begin{definition}[Tate classes and potential Tate classes]
  	Let $k_0 \subset \overline \bF_p$ be a finite field, $X_0/k_0$ a smooth projective absolutely irreducible variety, $X = X_0\times_{k_0} \overline \bF_p$ and $\ell$ be a prime number different from $p$. For any non-negative integer $i$, a \textit{Tate class} in $H^{2i}(X, \Ql)$ is an eigenvector for the Frobenius endomorphism, associated to the eigenvalue $q^i$. An element $c$ of $H^{2i}(X,\Ql)$ is called a potential Tate class if there is a finite extension $k_1$ of $k_0$ such that $c$ is a Tate class in $H^{2i}(X, \Ql)=H^{2i}(X_1 \times_{k_1} \overline \bF_p)$, where $X_1=X_0 \times_{k_0} {k_1}$. 
	
	The set of potential Tate classes in $H^{2i}(X, \Ql)$ only depends on $X$, so we use the following abuse of notation: a Tate class on $X/\overline \bF_p$ will be a potential Tate class in some $H^{2i}(X, \Ql)$.
\end{definition}

\begin{definition}[Algebraic classes]
	Let $X$ be an irreducible smooth projective variety over $\overline \bF_p$. For any non-negative integer $i$, an algebraic class in $H^{2i}(X, \Ql)$ is an element of the image of the cycle class map
	\[\mathrm{cl}^i: \CH^i(X)_{\Ql} \to H^{2i}(X, \Ql).\]
\end{definition}
\begin{remark}
	For any irreducible smooth projective variety $X$ over $\overline \bF_p$, algebraic classes on $X$ are Tate classes. Tate conjecture predicts that every Tate class on $X$ is also an algebraic class, but few results are known about this conjecture.
\end{remark}
\begin{definition}[Lefschetz and exotic classes]
  Let $X/\overline \bF_p$ be a smooth projective variety, a divisor class in $H^2(X,\Ql)$ is an element of the image of $\mathrm{cl}^1: \CH^1(X)_{\Ql} \to H^2(X,\Ql)$. The subalgebra generated by divisors classes in $H^*(X,\Ql)$ under the cup product will be called the subalgebra of Lefschetz classes $\mathcal L_\ell(X)$. For a non-negative integer $i$, a Lefschetz class in $H^{2i}(X,\Ql)$ is an element of $H^{2i}(X,\Ql) \cap \mathcal L_\ell(X)$.
  Tate classes in $H^{2i}(X, \Ql)$ that are not Lefschetz classes are called \textit{exotic classes}.

If $X_0$ is defined over a finite field $k_0 \subset \overline \bF_p$, we define an exotic class on $X_0$ to be an exotic class on $X = X_0 \times_{k_0} \overline \bF_p$.
\end{definition}
\begin{definition}[CM-fields and CM-algebras]
  Let $K$, $L$ be number fields and $R$ be a ring.
  \begin{itemize}
  \item We say that $K$ is totally real (resp. totally imaginary) if all embedings $K \to \bC$ (resp. no embedings) are real valued. 
  \item We say that $L$ is a CM-field if it is a totally imaginary quadratic extension of a totally real field.
  \item We say that $R$ is a CM-algebra if it is isomorphic to a product of CM-fields.
  \end{itemize}
  \end{definition}
  \begin{remark}
  	If $R$ is a CM-algebra, there exists a unique automorphism $\tau$ of $R$ such that for any ring map
	\[\sigma: R \to \bC, \]
	we have $\tau_{\bC} \circ \sigma = \sigma \circ \tau$, where $\tau_{\bC}$ denotes the complex conjugation.
  \end{remark}
 \begin{definition}[Abelian schemes with CM]
  Let $S$ be an irreducible noetherian scheme and $\mathcal A/S$ be an abelian scheme of relative dimension $g$. We say $\mathcal A$ has CM if there is a CM-algebra $R$ of degree $2g$ over $\bQ$ and an injective map of rings $R \rightarrow \End^0(\mathcal A)$.
 \end{definition}

 \begin{definition}[CM-types] \label{def_CM-types}
 	Let $K$ be a field of characteristic zero and fix an algebraic closure $\overline K$. Let $A$ be an abelian variety over $K$ with complex multiplication by $L$. Let $0$ denote the geometric point corresponding to the zero element of $A$. Then the differential of the action of $L$ on $A$ induces an action on the tangent space of $A$ at $0$.
 	$$ L \rightarrow \End_{\overline K}(T_0 A).$$
 	The subset of characters $\Phi \subset \Hom(L, \overline K)$ of the action of $L$ on $T_0 A$, is called the \textit{CM-type} of $A$ and $L$. \\
	More generally, a CM-type for $L$ will be a subset $\Phi$ of $\Hom(L, \overline K)$ such that we have a partition $\Phi \cup \overline \Phi = \Hom(L, \overline K)$.
 \end{definition}
 \begin{remark}[Link with the Hodge decomposition] \label{hodge_type-CM_type}
 	Let $A$ be a complex abelian variety with complex multiplication by $L$ and type $\Phi$. We have that $\Hom(L, \bC) = \Phi \cup \overline \Phi$. Then the tangent space $V$ of $A$ at $0$ is a complex vector space which decomposes as 
 	$$ V = \bigoplus_{\sigma \in \Phi} V_\sigma $$
 	where $V_\sigma = \{ v \in V| \; \ell \cdot v = \sigma(\ell)v \}$ is an eigenline for the action of $L$. On top of that, \cite[I, \S 1]{mum} gives us the Hodge decomposition:
 	$$H^1(A, \bC) = {V}^{*} \oplus {\overline V}^*,$$
where $^*$ refers to duality and $\overline \cdot$ refers to complex conjugation. \\
 	Hence, the eigenlines for the action of $L$ on $H^1(A)$ are of $2$ kinds: the eigenlines which are in the $H^{1,0}(A)$ corresponds to the characters $\sigma \in \Phi$ and the eigenlines which are in the $H^{0,1}(A)$ corresponds to the characters $\sigma \in \overline \Phi$.
 \end{remark}
 \begin{definition}
 A $p$-adic field is a finite extension $K$ of $\Qp$. We will denote by $\mathcal O_K$ the ring of elements of $K$ which are integral over $\Zp$. The ring $\mathcal O_K$ has a unique maximal ideal $\frk p$, and the residue field $\mathcal O_K/\frk p$ is a finite field.
 \end{definition}
  The following proposition has been used by Shimura and Taniyama in \cite{ST61}, it will provide us information on Weil numbers using information of Hodge theoretic nature. 
 For a proof, see \cite[Lemma 5]{tat71}.
 \begin{proposition}[Shimura-Taniyama's formula] \label{S-T_formula}
 	Let $K$ be a $p$-adic field with integer ring $\mathcal O_K$ whose residue field is of cardinal $q_0$. Fix an algebraic closure $\Qpbar$ of $K$, and an abelian scheme $\mathcal A$ over $\mathcal O_K$ with CM by a field $L$ with CM-type $\Phi$. For each place $v$ of $L$ over $p$, let us denote by $H_v$ the subset of $\Hom(L, \Qpbar)$ of maps which factors through the completion $L \to L_v$, and $\Phi_v = \Phi \cap H_v$. Then there exists $\pi \in L$ which induces the Frobenius of the special fiber of $\mathcal A$ and such that for all place $v | p$ in $L$ we have:
	$$\frac{v(\pi)}{v(q_0)} = \frac{\# \Phi_v}{\#H_v}.$$ 	
 \end{proposition}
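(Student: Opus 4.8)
The plan is to reduce the identity to a degree computation for $p$-divisible groups, in the spirit of Tate's proof of \cite[Lemma 5]{tat71}. Up to replacing $\mathcal{A}$ by an isogenous abelian scheme — which changes neither the Weil number $\pi$ nor the CM-type — I may assume that the maximal order $\mathcal{O}_L$ acts on $\mathcal{A}$. Since $\mathcal{O}_L \otimes_{\bZ} \Zp = \prod_{v \mid p} \mathcal{O}_{L_v}$, the associated idempotents decompose the $p$-divisible group as $\mathcal{A}[p^\infty] = \bigoplus_{v \mid p} G_v$, where $G_v := \mathcal{A}[v^\infty]$ is a $p$-divisible group over $\mathcal{O}_K$ equipped with an $\mathcal{O}_{L_v}$-action and of $\Zp$-height $[L_v : \Qp] = \# H_v$.

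First I would pin down the dimension of $G_v$. The $\mathcal{O}_K$-module $\mathrm{Lie}(\mathcal{A}/\mathcal{O}_K)$ is free of rank $g$ and coincides with $\mathrm{Lie}(\mathcal{A}[p^\infty])$; after extending scalars to $\Qpbar$ it splits, by the definition of the CM-type and because $[L : \bQ] = 2g$, into the $L$-eigenlines indexed by $\Phi$, and the part contributed by $G_v$ is indexed exactly by those $\sigma \in \Phi$ factoring through $L_v$, i.e. by $\Phi_v$. Since the dimension of a $p$-divisible group over the local ring $\mathcal{O}_K$ can be read on any fibre, we get $\dim G_v = \# \Phi_v$, while $\sum_v \dim G_v = \dim \mathcal{A}[p^\infty] = g$, consistent with $\# \Phi = g$.

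Next I would bring in the Frobenius. The special fibre $\mathcal{A}_0$ is an abelian variety over $\bF_{q_0}$ carrying the $\mathcal{O}_L$-action, so its relative $q_0$-power Frobenius endomorphism is central in $\End^0(\mathcal{A}_0)$, hence lies in the centre, which is contained in the maximal commutative subfield $L$; this is the element $\pi$, an algebraic integer, so $\pi \in \mathcal{O}_{L_v}$ for each $v \mid p$. As the idempotents of $\mathcal{O}_L \otimes \Zp$ are defined over $\Zp$, the map $[\pi]$ preserves each $G_{v,0}$ and acts there as the relative $q_0$-Frobenius of $G_{v,0}$; therefore its degree (the order of its kernel) is $q_0^{\dim G_{v,0}} = q_0^{\# \Phi_v}$. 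On the other hand, $G_v$ is of rank one over $\mathcal{O}_{L_v}$ in a compatible sense — a fact inherited from the characteristic-zero statement that $H_1(\mathcal{A}_{\overline K})$ is a free rank-one $L$-module, via flatness over $\mathcal{O}_K$ and Dieudonné theory — so the degree of $[\pi]$ on $G_{v,0}$ also equals $\#(\mathcal{O}_{L_v}/\pi\,\mathcal{O}_{L_v}) = p^{f_v \cdot \mathrm{ord}_v(\pi)}$, where $f_v$ is the residue degree of $L_v/\Qp$ and $\mathrm{ord}_v$ the valuation with value group $\bZ$.

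Equating the two expressions and writing $q_0 = p^{f_0}$ gives $f_v \cdot \mathrm{ord}_v(\pi) = f_0 \cdot \# \Phi_v$; since $\mathrm{ord}_v(q_0) = e_v f_0$ with $e_v$ the ramification index of $L_v/\Qp$, the valuation $v$ normalized by $v(q_0) = 1$ equals $\mathrm{ord}_v/(e_v f_0)$, so $v(\pi) = \#\Phi_v/(e_v f_v) = \#\Phi_v/\#H_v$, which is the assertion (recall $\#H_v = [L_v:\Qp] = e_v f_v$). The main obstacle is the step relating $\deg[\pi]$ on the special fibre to $\mathrm{ord}_v(\pi)$: it requires knowing that $G_v$ is genuinely ``rank one'' over $\mathcal{O}_{L_v}$ — which is what forces the reduction to the maximal order and uses the rank-one-ness of $H_1$ in characteristic zero — and that this structure, and hence the formula $\deg[\pi] = \#(\mathcal{O}_{L_v}/\pi)$, survives reduction modulo $p$; this is precisely where the theory of $p$-divisible groups over $\mathcal{O}_K$ and their Dieudonné modules does the real work. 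The remaining points — the identification of the Frobenius with an element of $L$ and its compatibility with the decomposition indexed by $v$ — are comparatively formal.
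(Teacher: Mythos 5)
The paper does not supply its own proof of \Cref{S-T_formula}; it cites \cite[Lemma 5]{tat71}. Your argument faithfully reconstructs Tate's proof from that reference --- isogeny to get an action of the maximal order $\mathcal{O}_L$, the decomposition of $\mathcal{A}[p^\infty]$ by the idempotents of $\mathcal{O}_L\otimes_{\bZ}\Zp$, the Lie-algebra identification $\dim G_v=\#\Phi_v$, and the two degree computations of $[\pi]$ on the special fibre (once as the relative $q_0$-Frobenius, once via the rank-one $\mathcal{O}_{L_v}$-module structure) --- and the final bookkeeping with $e_v$, $f_v$, $f_0$ is correct.
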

       The abelian varieties of this paper will be constructed by the following theorem \cite[Lemma 4]{tat71}, central in Honda--Tate theory.
  \begin{theorem} \label{thm_bourbaki}
    Let $L$ be a CM-field and $\Phi$ be a CM-type for $L$ with values in $\overline{\bQ}_p$. There exist a $p$-adic field $K$, a morphism of $\Qp$-algebras $K \to \overline{\bQ}_p$ and a CM-abelian scheme $\mathcal A$ of CM-type $(L, \Phi)$
    defined over the ring of integers $\mathcal O_K$.
  \end{theorem}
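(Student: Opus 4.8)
The plan is to reproduce the argument behind \cite[Lemma 4]{tat71}: first construct the abelian variety over $\bC$ out of the analytic datum $(L,\Phi)$, then descend it to a number field, and finally spread it out over the ring of integers of a $p$-adic field using the potential good reduction of abelian varieties with complex multiplication.

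First I would carry out the analytic construction. Writing $\Phi=\{\sigma_1,\dots,\sigma_g\}\subset\Hom(L,\bC)$ and letting $\iota\colon L\hookrightarrow\bC^{\Phi}$ be $x\mapsto(\sigma_i(x))_i$, the quotient $A_{\bC}:=\bC^{\Phi}/\iota(\mathcal O_L)$ is a complex torus on which $\mathcal O_L$ acts by multiplication, whence a ring homomorphism $L\to\End^0(A_{\bC})$; its tangent space at $0$ is $\bigoplus_{\sigma\in\Phi}\bC_\sigma$, so its CM-type is precisely $\Phi$ in the sense of \Cref{hodge_type-CM_type}. Since $L$ is a CM-field I would pick $\xi\in L$ with $\overline\xi=-\xi$ and $\operatorname{Im}\bigl(\sigma(\xi)\bigr)>0$ for all $\sigma\in\Phi$; then $(x,y)\mapsto\operatorname{Tr}_{L/\bQ}(\xi x\overline y)$, transported through $\iota$, is a Riemann form, so $A_{\bC}$ is in fact a (polarized) abelian variety, with CM by $L$ and CM-type $\Phi$.

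Next I would descend to a number field. A complex abelian variety carrying a polarization together with an action of a CM-field of degree twice its dimension is rigid --- the corresponding moduli problem is $0$-dimensional and the CM structure is algebraic --- so it is defined over $\overline\bQ$; hence there are a number field $F\subset\bC$, an abelian variety $A/F$ and an embedding $\mathcal O_L\hookrightarrow\End_F(A)$ whose base change to $\bC$ recovers $(A_{\bC},\Phi)$. Then I would fix a place $\frkp$ of $F$ above $p$ and an auxiliary prime $\ell\neq p$: the $\ell$-adic Tate module of $A$ is an abelian, locally algebraic $\Gal(\overline F/F)$-representation, so the inertia group at $\frkp$ acts on it through a finite quotient, and by the Néron--Ogg--Shafarevich criterion $A$ acquires good reduction over a finite extension $F'/F$. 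Choosing a place $\frkp'$ of $F'$ above $\frkp$ and setting $K:=F'_{\frkp'}$, the abelian variety $A_K$ extends to an abelian scheme $\mathcal A$ over $\mathcal O_K$; the $\mathcal O_L$-action extends by the Néron mapping property, and $\mathcal A_K$ retains the CM-type $\Phi$ since it is a base change of $A$. This $\mathcal A/\mathcal O_K$ is the desired abelian scheme.

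The one non-formal input is the potential good reduction of abelian varieties with complex multiplication --- equivalently, that inertia acts on their Tate modules through a finite group --- which rests on Serre--Tate theory (or, equivalently, the main theorem of complex multiplication). The analytic construction of CM abelian varieties and the descent and Néron-model manipulations are standard, so I expect that reduction step, rather than any computation, to be the crux of the argument.
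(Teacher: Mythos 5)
Your argument is the standard one: it is essentially the proof of \cite[Lemma 4]{tat71}, which is precisely the reference the paper cites for this statement (analytic construction of a CM abelian variety over $\bC$, descent to a number field, then potential good reduction via Serre--Tate and Néron models to spread out over $\mathcal O_K$). Since the paper gives no independent proof and simply invokes Tate's lemma, your proposal matches the intended argument.
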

  We will now use the language of motives, for more precisions see \cite{and04} and \cite[\S 5.1]{OS11}. In the following, $S$ will be either $\Spec (C)$ where $C$ is a field, or $\Spec(\mathcal O_K)$ where $\mathcal O_K$ is the ring of integers of a $p$-adic field $K$.
  
 \begin{definition}
 For a given field $F$ of characteristic $0$, the category of relative $F$-linear smooth Chow motives over $S$ is denoted $\CHM(S)_F$. It is a rigid $F$-tensor category, for each relatively smooth scheme $X$ over $S$, we denote by $\frk h(X)$ the smooth Chow motive associated to $X$.
 \end{definition}
 \begin{definition}[Realization functors]
 \begin{itemize}	
 \item For $s$ a point of $S$, $\eta$ the point corresponding to an algebraic closure of $\kappa(s)$ and a prime number $\ell$ different from the characteristic of $\kappa(s)$, we have the $\ell$-adic realization functor
 \[ R_\ell^{(s)}:\CHM(S)_{\bQ} \to \Ql[G]-\text{modules},\] 
 extending $\frk h(X) \mapsto H^*(X_\eta, \Ql)$ endowed with the natural action of $G=\Gal(\kappa(\eta)/\kappa(s))$. 
 \item For $\eta$ a $\bC$-point of $S$, we have the Hodge realization functor with values in the category of $\bQ$-Hodge structures $\text{Hodge}(\bQ)$,
 \[ R^{(\eta)}_B: \CHM(S)_{\bQ} \to \text{Hodge}(\bQ),\]
 extending $\frk h(X) \mapsto H_B^*(X_\eta, \bQ)$ endowed with its natural polarizable Hodge structure.
 \end{itemize}
 \end{definition}
 \begin{remark}
In practice, $S$ will either be $\Spec(k_0)$ with $k_0$ a finite field or $\Spec(\mathcal O_K)$ with $\mathcal O_K$ the ring of integers of a $p$-adic field $K$. In these cases we will take $s$ to be the identity of $\Spec(k_0)$ or the residual point of $\Spec (\mathcal O_K)$, and we will drop the $s$ from the notation $R_\ell$.

In these cases, $s$ is the spectrum of a finite field, hence the Frobenius element is a canonical element of $G$, and we get a canonical automorphism of $R_\ell(M)$ which we will again call the arithmetic Frobenius. We define the geometric Frobenius of $M$ as the inverse of the arithmetic Frobenius.
 \end{remark}
 \begin{definition}
 Let $M$ be a smooth Chow submotive of $\frk h(\mathcal A)$ for an abelian scheme $\mathcal A$ over $S$. We call rank of $M$ and denote by $\dim_{\bQ}(M)$ the $\Ql$-dimension of the $\ell$-adic realization of $M$ for any prime number $\ell$ invertible on $S$. This is independent of $\ell$ by \cite[Corollary 3.5]{jan07}.
 \end{definition}
 
 \begin{definition}[Tate classes in a motive]
 	Let $M$ be a smooth Chow motive over $S$, pure of weight $2i$. For a prime $\ell$ different from $p$, a Tate class in $R_\ell(M)$ is an eigenvector of the Frobenius endomorphism for the eigenvalue $q^i$. 
	
	We will say that $M$ contains a Tate class if there is some $\ell$ such that $R_\ell(M)$ contains a nonzero Tate class (this is independent of $\ell$ by \cite{KM74}). We will say that $M$ contains only Tate classes if the Frobenius endomorphism acts as multiplication by $q^i$ on $R_\ell(M)$.
 \end{definition}
 
 \begin{definition}[Potential Tate classes in a motive]
 	Let $M$ be a smooth Chow motive over $S$, pure of weight $2i$. For $\ell$ different from $p$, let $F$ be the Frobenius endomorphism acting on $R_\ell(M)$. An element $x$ of $R_\ell(M)$ is called a potential Tate class if there exists $k$ such that $x$ is an eigenvector of $F^k$ for the eigenvalue $q^{ik}$ on $R_\ell(M)$.
	
	We will say that $M$ contains only potential Tate classes if there is a power $F^k$ of the Frobenius endomorphism $F$ acting as $q^{ik}$.
 \end{definition}
 
  \begin{definition}
  	Let $M$ be a $\bQ$-linear smooth Chow motive over $S$, of rank $r$. We say $M$ has CM if there is a CM-algebra $R$ of degree $r$ over $\bQ$ and an injective map of rings $R \to \End_{\CHM(S)_{\bQ}}(M)$.
\end{definition}
  \begin{theorem}[Deninger, Murre, O'Sullivan] \label{kunneth-hopf}
	  Let $\mathcal A$ be an abelian scheme of relative dimension $g$ over $S$ which is either the spectrum of a field or $\Spec(\mathcal O_K)$ where $\mathcal O_K$ is the ring of integers a $p$-adic field $K$. The Hopf algebra $\frk h(\mathcal A)$ is symmetric and the component $\frk h^1(\mathcal A)$ of $\frk h(\mathcal A)$ on
which $n_{\frk h(\mathcal A)}$ acts as $n$ for every $n$ satisfies $\bigwedge^{2g+1} \frk h^1(\mathcal A)=0$. In particular we get a decomposition
	\[\mathfrak h(A) = \bigoplus_{n=0}^{2g} \frk h^n(A) \]
where $\frk h^n(A) = \bigwedge^n  \frk h^1(A)$.
Moreover, the realization in $\ell$-adic cohomology of $\frk h^n(A)$ corresponds to $H^n(\mathcal A_{\eta}, \Ql)$ for all $\ell$, where $\eta$ is a geometric point of $S$. This decomposition is compatible with the morphism of specialization \[\Spec(\bF_q) \to \Spec(\mathcal O_K).\]
  \end{theorem}
The decomposition of abelian motives in a relative setting has been done in \cite[Corollary 3.2]{DM91} in the case where $S$ is smooth over a field. For a proof of the previous theorem in a more general setting see O'Sullivan \cite[Theorem 5.1.6]{OS11}.

\begin{remark}
There is a general definition for $\Sym$ objects and $\bigwedge$ objects in a rigid $\otimes$-category. But with this definition, some notation may appear unusual: for an abelian variety $A$ over $S=\Spec(\overline \bF_p)$, we have $\frk h^n(A) = \Sym^n  \frk h^1(A)$. This is because $\frk h^1(A)$ is an odd cohomology group, and because the realization functor $R_\ell$ that we consider does not respect the $\otimes$-structures, it respect them only up to a sign. Hence for this general definition of $\Sym$ and $\bigwedge$, we have as expected
\[ R_\ell (\Sym^n \frk h^1(A)) ) = {\bigwedge}^n R_\ell(\frk h^1(A)) = {\bigwedge}^n H^1(A, \Ql).\]
But for psychological reasons, we will not use this general definition, and what we denote by $\bigwedge^n \frk h^1(A)$ in the previous theorem and in the rest of the article is the motive that is denoted $\Sym^n \frk h^1(A)$ in \cite{OS11}. Hence in this paper, the convention is so that
\[R_\ell \bigg (\bigwedge^n \frk h^1(A)) \bigg ) = \bigwedge^n R_\ell \frk (h^1(A)).\]
\end{remark}

  The complex multiplication of $L$ on an abelian scheme $\mathcal A$ allows us to decompose the motive $\mathfrak h^n(\mathcal A)$ into a direct sum of smaller motives.

\begin{notation} \label{not-decompCM}
	Let $S$ be either the spectrum of a field, or $\Spec(\mathcal O_K)$ where $\mathcal O_K$ is the ring of integers of a $p$-adic field $K$. Let $\mathcal A$ be an abelian scheme of relative dimension $g$ over $S$, with complex multiplication by a CM-field $L$. We will denote by $\tilde L$ a Galois closure of $L$, $G$ the Galois group of $\tilde L$ over $\bQ$, $\alpha$ a primitive element of $L$ over $\bQ$ and $\{\alpha_1, \dots, \alpha_{2g}\}$ the set of conjugates of $\alpha$ in $\tilde L$. We can assume that $\alpha_{i+g} = \overline{\alpha_i}$ for $1 \leqslant i \leqslant g$, and we will identify the set $\{\alpha_1, \dots, \alpha_{2g}\}$ with $\{1, \dots, 2g\}$ to endow the latter with an action of $G$. As an embedding $L \to \tilde L$ is determined by its value at $\alpha$, we have
	\[ \Hom(L, \tilde L) \cong \{ \alpha_1, \dots, \alpha_{2g} \} \cong \{1, \dots, 2g\}. \]
\end{notation}
\begin{lemma} \label{decomp-h1}
With the notations of \ref{not-decompCM}, the motive $\frk h^1(\mathcal A)$, has complex multiplication by $L$. In particular $\frk h^1(\mathcal A)_{\tilde L}$ is canonically the direct sum of rank $1$ submotives indexed by the embeddings $\Hom(L, \tilde L) \cong\{1, \dots, 2g\}$:
	\[ \frk h^1(\mathcal A)_{\tilde L} = \bigoplus_{\sigma \in \Hom(L, \tilde L)} L_\sigma.\]
\end{lemma}
For a proof, see \cite[Proposition 6.6]{anc21}.
\begin{lemma} \label{decomp-hn}
With the notations of \ref{not-decompCM}, for $n$ a nonnegative integer, the motive $\frk h^n(\mathcal A)_{\tilde L}$ has a canonical decomposition as a direct sum of lines
	\begin{equation} \label{eq:dec_lines}
	 	\frk h^n(\mathcal A)_{\tilde L} = \bigoplus_{\substack{I \subset \Hom(L, \tilde L) \\ |I| = n}} L_I,
	 \end{equation}
where $L_I = \bigotimes_{\sigma \in I} L_\sigma$ and the $L_\sigma$ are the lines of \Cref{decomp-h1}.
\end{lemma}
\begin{proof}
	\Cref{kunneth-hopf} gives us $\frk h^n(\mathcal A) = \bigwedge^n \frk h^1(\mathcal A)$, then by multilinearity of $\bigwedge^n$ and by the decomposition provided by \Cref{decomp-h1}, we get the desired result.
\end{proof}

\begin{proposition}  \label{motive_decomposition}
With the notations of \ref{not-decompCM}, we have a decomposition of $\frk h^n(\mathcal A)$ in $\CHM(S)_{\bQ}$
\[ \frk h^n(\mathcal A) = \bigoplus_{\langle I \rangle} M_{\langle I \rangle}, \] 
where $\langle I \rangle$ runs through the set of orbit of $G$ acting on 
\[\big \{I \subset \Hom(L, \tilde L), |I| = n \big \} \cong \big \{I \subset \{1, \dots 2g\}, |I| = n \big \}.\]
More precisely $M_{\langle I \rangle}$ is obtained by descent, by endowing
	 \[(M_{\langle I \rangle})_{\tilde L}:= \bigoplus_{J \in \langle I \rangle} L_J\]
	  with a $\bQ$-structure for all orbits $\langle I \rangle$, where the $L_J$ are the lines of \Cref{decomp-hn} .
\end{proposition}
For a proof, see \cite[Proposition 6.7]{anc21}.
\begin{remark} \label{rmk-rank-eigvalues}
  For a given subset $I$ of $\{1, \dots, 2g\}$, the rank of $M_{\langle I \rangle}$ is $\#\langle I \rangle$. Moreover, these decompositions are stable under pullback by a morphism $S' \to S.$
  
  When $S$ is $\Spec(B)$ where $B$ is either a finite field, or the ring of integers of a $p$-adic field, we have a canonical element $\pi \in \End^0(\mathcal A)$ which we call will call Frobenius.  In case $B$ is a finite field it is the usual geometric Frobenius acting on $\mathcal A$. If $B$ is the ring of integers $\mathcal O_K$ of a $p$-adic field $K$, then by \Cref{S-T_formula} the Frobenius endomorphism of the special fibre lifts to an element $\pi$ of $L \subset \End^0(\mathcal A)$. By functoriality, $\pi$ acts on $\frk h^1(\mathcal A)$, and we can order the set of its eigenvalues $\{\pi_1, \dots, \pi_{2g}\} \subset \tilde L$ so that the Frobenius acts on $M_{\langle I \rangle}$ with eigenvalues in 
  \[\left \{\prod_{j \in J} \pi_j | \; J \in \langle I \rangle \right \}. \]
  Hence, the $\ell$-adic realization of $M_{\langle I \rangle}$ contains a nonzero Tate class if and only if $\#I$ is even and $\prod_{i \in I} \pi_i = q^{\# I/2}$ where $q$ is the cardinal of the residue field of $B$.
\end{remark}

\begin{definition}[Motives containing algebraic classes] \label{def_contain_algcycle}
	Let $M$ be a smooth Chow motive over $S = \Spec(F)$ pure of weight $2n$,  where $F$ is an algebraically closed field. We will say that $M$ contains a nonzero algebraic class if there are maps
	\[c:\bUn(-n) \to M, c': M \to \bUn(-n) \]
	such that $c' \circ c \not = 0$ in $\End(\bUn)\cong \bQ$.
	
	Now, if $M$ is a smooth Chow motive pure of weight $2n$ over $S = \Spec(\mathcal O_K)$ where $\mathcal O_K$ is the ring of integers of a $p$-adic field $K$, or $S = \Spec(k_0)$ with $k_0$ a finite field, we fix a map $s:\Spec(\overline \bF_p) \to S$ and we say that $M$ contains a nonzero algebraic class in its special fibre if $s^*M\in \CHM(\Spec(\overline \bF_p))$ contains a nonzero algebraic class.
\end{definition}

\begin{proposition}
	Let $\mathcal A$ be an abelian scheme over $S = \Spec( \mathcal O_K)$ with $\mathcal O_K$ the ring of integers of a $p$-adic field $K$ or $S=\Spec(k_0)$ with $k_0$ a finite field, and $\Spec (\overline \bF_p) \to S$ be a geometric point of $S$. Let $M$ be a submotive of $\frk h^{2n}( \mathcal A)$ in $\CHM(S)_{\bQ}$, then $M$ contains a nonzero algebraic classes in its special fibre in the sense of \Cref{def_contain_algcycle} if and only if for all $\ell \not = p$, $R_\ell (M) \subset H^{2n}( \mathcal A \times_S \overline \bF_p, \Ql)$ contains a nonzero algebraic class.
\end{proposition}
\begin{proof}
  Denote by $s: \Spec(\overline \bF_p) \to S$ the geometric special point of $S$. By definition, $M$ is a direct summand of $\frk h^{2n}(\mathcal A)$, hence we have an inclusion and a projection $i: M \to \frk h^{2n}(\mathcal A)$, $p: \frk h^{2n}(\mathcal A) \to M$. Assume that $M$ contains a nonzero algebraic class in its special fibre, then we have maps
  \begin{align*}
    c&: \bUn(-n) \to s^*M \\
    c'&: \; s ^* M \; \, \to \bUn(-n)
  \end{align*}
  such that $c' \circ c \not = 0$. Hence for any prime number $\ell \not = p$, $R_\ell(c' \circ c) \not = 0$ so that $R_\ell(c) \not = 0$. Then the image of $R_\ell(i \circ c)$ is by definition in $R_\ell(M)$ and it contains a nonzero algebraic class. 
	
	Now, assume that for all $\ell \not = p$, $R_\ell (M) \subset H^{2n}( \mathcal A \times_S \overline \bF_p, \Ql)$ contains a nonzero algebraic class. Let $\ell \not = p$ be a prime number such that numerical equivalence coincides with $\ell$-adic homological equivalence on $\mathcal A \times_S \overline \bF_p$ (such an $\ell$ exists by \cite[Theorem 1]{clo99}).
        By assumption, there is an algebraic $\ell$-adic class of $\frk h^{2n}(\mathcal A)_{\Ql}$ inside $M_{\Ql}$, which means that we have a map
        \[c_1: \bUn(-n)_{\Ql} \to s^*\frk h^{2n}(\mathcal A)_{\Ql}\]
        such that $R_\ell(c_1) \not = 0$ has values in $R_\ell(M)$. Consider the map
        \[c_2=p \circ c_1: \bUn(-n)_{\Ql} \to s^*M_{\Ql}. \]
        We have
        \begin{align*}
          R_\ell( i ) \circ R_\ell( c_2) &= R_\ell(i \circ p) \circ R_\ell(c_1) \\
          &= R_\ell(c_1),
        \end{align*}
        where the last equality comes from the fact that the image of $R_\ell(c_1)$ is inside $R_\ell(M)$. As $R_\ell(c_1) \not = 0$, we deduce that $R_\ell(c_2) \not = 0$. By compatibility of $\CHM(\overline \bF_p)$ with respect to extensions of coefficients \cite[\S 5]{OS11}, we have an isomorphism
        \[  \Hom(\bUn(-n), s^*M) \otimes_{\bQ} \Ql \xrightarrow{\raisebox{0ex}{\ensuremath{\sim}}} \Hom(\bUn(-n)_{\Ql}, s^*M_{\Ql}). \]
        Hence there exists $c: \bUn(-n) \to s^*M$ such that $R_\ell(c) \not = 0$, so that $R_\ell(i \circ c) \not = 0$. 
        As homological equivalence coincides with numerical equivalence for $\ell$, there exists a map $c_1': s^*\frk h^{2n}(\mathcal A) \to \bUn(-n)$ such that $c'_1 \circ (i \circ c) \not = 0$. Denote by $c'$ the composite
        \[c_1' \circ i : s^*M \to \bUn(-n).\]
        Then $c' \circ c = c_1' \circ i \circ c \not = 0$. This shows that $M$ contains a nonzero algebraic class in its special fibre in the sense of \Cref{def_contain_algcycle}.
\end{proof}
\begin{definition}[Exotic submotives]
	Let $\mathcal A$ be an abelian scheme over $S = \Spec(\mathcal O_K)$ with $\mathcal O_K$ the ring of integers of a $p$-adic field $K$ or $S=\Spec(k_0)$ with $k_0$ a finite field, and $\Spec (\overline \bF_p) \to S$ be a geometric point of $S$. For a submotive $M$ of $\frk h( \mathcal A)$ in $\CHM(S)_{\bQ}$, we will say that $M$ is an \textit{exotic submotive} of $\frk h(\mathcal A)$, or just an exotic motive if $\mathcal A$ is clear in the context, if $M \not = 0$ and the nonzero elements of $R_\ell (M) \subset H^*(\mathcal A \times_S \overline \bF_p, \Ql)$ are exotic Tate classes. Equivalently, $M \not = 0$ is exotic if it is made of Tate classes, and if there is no nonzero Lefschetz classes of $\mathcal A \times_S \overline \bF_p$ in $R_\ell (M)$.
\end{definition}
In this definition, everything is independent of $\ell$ by \cite[Proposition 6.8]{anc21}.
\begin{definition}[Lefschetz submotives]
	Let $\mathcal A$ be an abelian scheme over $S = \Spec(\mathcal O_K)$ with $\mathcal O_K$ the ring of integers of a $p$-adic field $K$ or $S=\Spec(k_0)$ with $k_0$ a finite field, and $\Spec (\overline \bF_p) \to S$ be a geometric point of $S$. For a submotive $M$ of $\frk h( \mathcal A)$ in $\CHM(S)_{\bQ}$, we will say that $M$ contains only Lefschetz classes in its special fibre if every element of $R_\ell(M) \subset H^*(\mathcal A \times_S \overline \bF_p, \Ql)$ is a Lefschetz classe.
\end{definition}

\begin{lemma} \label{exoset_exomot}
	Let $\mathcal A$ be an abelian scheme over the ring of integers $\mathcal O_K$ of a $p$-adic field $K$, or over a finite field $k_0$, with complex multiplication by a CM field $L$ and $\pi \in L$ be the Frobenius of its special fibre. Let $\tilde L$ be a Galois closure of $L$, $\{\pi_1,\dots,\pi_r\}\subset\tilde L$ the set of Galois conjugates of $\pi$ and $I$ be a subset of $\{1, \dots, 2g\}=\Hom(L, \tilde L)$ of cardinal $2n$. Then the motive $M_{\langle I \rangle}$ of \Cref{motive_decomposition} contains a nonzero Lefschetz class in its special fiber if and only if the function
	\begin{align*}
		f_I: \{\pi_1, \dots, \pi_r\} &\longrightarrow \bN \\
          \pi_i &\mapsto \#\{\sigma \in I |\; \exists m, \sigma(\pi)^m = \pi_i^m \}
	\end{align*}
	is invariant under complex conjugation. Moreover it happens if and only if $M_{\langle I \rangle}$ contains only Lefschetz classes.
\end{lemma}
\begin{proof}
	Let $A_0$ be the special fibre of $\mathcal A$ and $A$ be its base change to $\overline \bF_p$. The fact that $M_{\langle I \rangle}$ contains a nonzero Lefschetz class if and only if all of its cohomology classes are Lefschetz is \cite[Proposition 6.8]{anc21}.
	
	Up to replacing $A_0$ by an extension of scalars, we can assume that all divisor classes of $A$ come from divisor classes of $A_0$. Consider
	\[\mathcal L(A)_{\tilde L} = \bigoplus_{\substack{\{\sigma, \sigma'\} \subset \Hom(L, \tilde L) \\ \sigma(\pi)\sigma'(\pi)=q}} L_{\{\sigma, \sigma'\}}\]
	as a submotive of $\frk h^2(A)_{\tilde L}$. The $\bQ$-structure on $\frk h^2(A)_{\tilde L}$ induces a $\bQ$-structure on $\mathcal L(A)_{\tilde L}$, which gives rise to $\mathcal L(A) \in \CHM(k)_{\bQ}$ such that $\mathcal L(A) \otimes_{\bQ} \tilde L = \mathcal L(A)_{\tilde L}$. Denote by $\mathbb L$ the set 
	\[ \big \{ \{\sigma, \sigma'\} \subset \Hom(L, \tilde L) \; | \; \sigma(\pi)\sigma'(\pi) = q  \big\},\]
	and for $\chi: \mathbb L \to \bN$, denote by
	\begin{align*}
		|\chi| &= \sum_{s \in \mathbb L} \chi(s) \\
		L_\chi &= \bigotimes_{s \in \mathbb L} L_s^{\otimes \chi(s)}.
	\end{align*}
	Because we have the decomposition of $\mathcal L(A)_{\tilde L}$ as a sum of lines, we deduce
	\[ \Sym^n \mathcal L(A)_{\tilde L} = \bigoplus_{\substack{\chi: \mathbb L \to \bN \\ |\chi| = n}} L_\chi.\]
	According to Tate's theorem \cite[Theorem 4]{tat66} the $\ell$-adic realization of $\mathcal L(A)$ is the subspace of divisors classes in $H^2(A, \Ql)$. More precisely, it is the subspace of divisors classes in
        \[H^2(A_0 \times_{k_0} \overline \bF_p, \Ql)\]
        which are defined over $k_0$, but by hypothesis all divisors classes of $A$ are defined over $k_0$. Cup product induces a map
	\[ \Delta: \Sym^n \mathcal L(A) \to \frk h^{2n}(A),\]
	which is compatible with the decompositions in sum of lines.
	The image of $R_{\ell} \Delta$ consists of sums of products of divisors, hence it is the subspace of Lefschetz classes in $H_{\ell}^{2n}(A)$. Hence the submotive $M_{\langle I \rangle}$ of $\frk h^{2n}(A)$ contains a nonzero Lefschetz class if and only if it has a nonzero intersection with the image of $\Delta$. By compatibility of $\Delta$ with respect to the decompositions and by Galois equivariance, this is the case if and only if the line $L_I$ of \Cref{decomp-hn} is in the image of $\Delta$. This happens if and only if there is a decomposition
	\[ I = \bigcup_{\{\sigma, \sigma'\} \in J} \{\sigma, \sigma' \}.\]
	Such a decomposition exists if and only if $f_I$ is invariant under complex conjugation.
\end{proof}

\begin{remark} \label{SCHT_reduction}
	The decomposition provided by \Cref{motive_decomposition} is not orthogonal for the intersection product. But we can introduce another product $\langle,\rangle_{1, \rm mot}^{\otimes 2n}$, for which the decomposition is orthogonal \cite[Proposition 6.7]{anc21}. Moreover, we know how to compare these two pairings on $\mathcal P^n(A)$: they are equal up to multiplication by a positive scalar \cite[Lemma 5.3]{anc21}. Hence, it is sufficient to prove the standard conjecture of Hodge type for $M_{\langle I \rangle}$'s separatly, for more details see \cite[\S 5-\S 6]{anc21}.
\end{remark}
   Knowing the rank of the motives of \Cref{motive_decomposition} will be useful in view of the next theorem due to Ancona \cite[Theorem 8.1]{anc21}.
  \begin{theorem} \label{thm_ancona}
    Let $M$ be a relative smooth Chow motive over the ring of integers $\mathcal O_K$ of a $p$-adic field $K$, with a quadratic form
    $q$. Denote by $V_B$ the Betti realization, $V_Z$ the vector space of algebraic classes in the geometric special fibre of $M$ modulo numerical equivalence. Assume that
    \begin{enumerate}
    \item $\dim_{\bQ} V_B = \dim_{\bQ}V_Z = 2$
    \item $q_B:V_B \times V_B \to \bQ$ is a polarization of Hodge structures.
    \end{enumerate}
    Then, $q_Z$ is positive definite.
  \end{theorem}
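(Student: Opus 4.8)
The plan is to compare the quadratic spaces $(V_Z,q_Z)$ and $(V_B,q_B)$ over each completion of $\bQ$ and then to combine the Hasse--Minkowski theorem with Hilbert reciprocity: once the two forms are known to be isometric over $\bQ_v$ for every finite place $v$, the product formula for Hilbert symbols forces them to be isometric over $\bR$ as well, and since a polarization of a rank-$2$ Hodge structure is positive definite over $\bR$, the form $q_Z$ will be positive definite over $\bR$, hence (being a rank-$2$ $\bQ$-form) over $\bQ$.

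For the local comparisons, fix an embedding $\overline K\hookrightarrow\bC$. If $\ell\neq p$, let $V_\ell$ be the $\ell$-adic realization of $M$; the good reduction hypothesis $S=\mathcal O_K$ lets the proper smooth base change theorem identify $V_\ell$, with the Frobenius action on the special fibre, with the $\ell$-adic realization of the generic fibre, and the Betti--$\ell$-adic comparison then gives an isometry $V_B\otimes_{\bQ}\Ql\cong V_\ell$. On the special fibre, the cycle class map (injective modulo homological equivalence, which coincides with numerical equivalence in this situation) realizes $V_Z$ inside $V_\ell$ compatibly with the forms; since $q_Z$ is nondegenerate and $\dim_{\bQ}V_Z=\dim_{\Ql}V_\ell=2$, this realization is an isometry $V_Z\otimes_{\bQ}\Ql\cong V_\ell$. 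Hence $q_B\otimes\Ql\cong q_Z\otimes\Ql$ for every $\ell\neq p$. At the prime $p$ one runs the same argument with crystalline cohomology in place of $\ell$-adic cohomology: the $p$-adic étale realization of the generic fibre is crystalline, the crystalline comparison theorem relates it to the crystalline realization of the special fibre, the Betti--$p$-adic comparison produces $V_B\otimes_{\bQ}\Qp$, and the crystalline cycle class map on the special fibre together with the nondegeneracy of $q_Z$ and the dimension count identifies $V_Z\otimes_{\bQ}\Qp$ with it. Thus $q_B\otimes\bQ_v\cong q_Z\otimes\bQ_v$ for all finite places $v$ of $\bQ$.

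Now I would conclude as follows. The forms $q_B$ and $q_Z$ are rank-$2$ $\bQ$-quadratic forms with the same localization at every finite place; hence they have the same discriminant in $\bQ^{\times}/\bQ^{\times 2}$ (a nonzero rational number that is a square in every $\bQ_\ell$ has even valuation at every prime and is positive, hence is a square in $\bQ$) and the same Hasse invariant $\varepsilon_v$ at every finite $v$. The product formula $\prod_v\varepsilon_v(q)=1$, applied to $q_B$ and to $q_Z$, then gives $\varepsilon_\infty(q_B)=\varepsilon_\infty(q_Z)$, so $q_B\otimes\bR\cong q_Z\otimes\bR$. Since $q_B$ is a polarization of the rank-$2$ Hodge structure $V_B$ it is positive definite over $\bR$ (in rank $2$ the Weil operator is $\pm\mathrm{id}$, and with the sign normalization of the pairing the Hodge--Riemann relations give the positivity), so $q_Z\otimes\bR$ is positive definite; therefore $q_Z$ is positive definite. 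The step I expect to be the main obstacle is the comparison at $p$: one must check that the crystalline realization of $M$ and the crystalline cycle class map on the special fibre yield an isometry of quadratic spaces over $\Qp$ itself, rather than only after scalar extension to $K_0$ or $K$ --- this is precisely where the integral model $\mathcal O_K$ is essential --- and for the motives of interest here this rests on the crystalline comparison theorem for abelian schemes, since those $M$ are direct summands of motives of abelian schemes over $\mathcal O_K$. A secondary point used throughout is the nondegeneracy of $q_Z$, which is what upgrades the cycle class maps from morphisms to isometries of quadratic spaces.
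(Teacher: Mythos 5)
The paper itself does not prove this result — it is quoted verbatim from Ancona, \cite[Theorem 8.1]{anc21} — so the comparison has to be made against Ancona's original argument. Your overall strategy is exactly his: compare $(V_Z,q_Z)$ and $(V_B,q_B)$ over each completion of $\bQ$ via cycle-class and comparison isomorphisms, invoke Hilbert reciprocity to pin down the Archimedean invariant, and deduce definiteness of $q_Z$ from the polarization hypothesis on $q_B$. The $\ell$-adic side ($\ell\neq p$) is treated correctly, and the passage through the product formula at the end is right: once the discriminant in $\bQ^\times/(\bQ^\times)^2$ and all finite Hasse invariants agree, the real ones must agree too.

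The genuine gap is precisely where you expected it, but it is sharper than you make it sound. The crystalline realization of $M$ and the crystalline cycle class map are $K_0$-linear, where $K_0=W(\kappa)[1/p]$ and $\kappa$ is the residue field of $K$; the chain of comparison isomorphisms (Berthelot--Ogus, de Rham, Betti) only yields an isometry $q_Z\otimes K_0\cong q_B\otimes K_0$. For $[K_0:\Qp]$ even the map $\Qp^\times/(\Qp^\times)^2\to K_0^\times/(K_0^\times)^2$ has a nontrivial kernel, so two rank-$2$ $\bQ$-forms can become isometric over $K_0$ without being isometric over $\Qp$; concretely, the discriminants could differ by a factor of $p$ and the Hasse invariants at $p$ could disagree. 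So you cannot simply ``run the same argument with crystalline cohomology'' — you need an additional input at $p$ to descend the isometry from $K_0$ to $\Qp$, and this is where Ancona's proof does nontrivial work (it is one of the technical hearts of \cite[\S 8]{anc21}). Your sentence deferring to ``the crystalline comparison theorem for abelian schemes'' gestures at the mechanism but does not supply the descent. Moreover, since Hilbert reciprocity only relates the product $\varepsilon_p\cdot\varepsilon_\infty$ once the other finite invariants agree, you genuinely need the $p$-adic comparison to conclude: skipping one finite prime destroys the argument. A secondary omission: you rely on numerical equivalence agreeing with homological equivalence on the special fibre, which for abelian varieties is a theorem (Lieberman/Clozel-type input) and should be cited, not asserted in passing.
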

 
\section{Main result and consequences} \label{section_main_result}
In this section, we state the main result of the paper, \Cref{main_result}. Then we explain how to deduce \Cref{thm_intro} announced in the introduction from it. The proof of \Cref{main_result} is given in \Cref{main_proof}.\\
We fix a prime number $p$.
\begin{theorem} \label{main_result}
  Let $Q$ be an imaginary quadratic number field in which $p$ is inert.
  Then for each totally real field $R$ of even degree $g$ satisfying
  \begin{enumerate}
  \item $p$ is inert in $R$,
  \item If $\tilde R$ is a Galois closure of $R$, then $\Gal(\tilde R/\bQ) = S_g$,
  \item There is an embedding of fields $Q_p = Q \otimes_{\bQ} \Qp \hookrightarrow R_p = R \otimes_{\bQ} \Qp$,
  \end{enumerate}
  there exists a simple abelian variety $A$ over $\overline{\bF}_p$ such that:
  \begin{itemize}
  \item $A$ satisfies the standard conjecture of Hodge type,
  \item $A \times E$ satisfies the standard conjecture of Hodge type, where $E$ is a supersingular elliptic curve,
  \item $\End^0(A)= Q \cdot R$,
  \item there is a Tate class on $A$ that cannot be the reduction of an algebraic class on a CM integral model of $A$, nor a Lefschetz class.
  \end{itemize}
\end{theorem}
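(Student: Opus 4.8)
The plan is to realize $A$ as the reduction of an explicit CM abelian variety built by Honda--Tate theory, to deduce the standard conjecture of Hodge type from Ancona's criterion once the Frobenius rank is controlled, and to detect the exotic non-liftable class by a combinatorial argument on the motivic decomposition. First I would set $L=Q\cdot R$: as $Q$ is imaginary and $R$ totally real they are linearly disjoint over $\bQ$, so $L$ is a CM field of degree $2g$, its Galois closure is $\tilde L=Q\cdot\tilde R$, and by hypothesis (2) $\Gal(\tilde L/\bQ)\cong S_g\times\bZ/2\bZ$, with $S_g$ permuting the $g$ embeddings of $R$, the factor $\bZ/2\bZ$ permuting the two embeddings of $Q$, and complex conjugation corresponding to $(\mathrm{id},\mathrm{swap})$. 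Hypotheses (1) and (3) force $p$ to have exactly two places $v,\bar v$ in $L$, each unramified of residue degree $g$ and interchanged by complex conjugation, so that $\Hom(L,\Qpbar)=H_v\sqcup H_{\bar v}$ is a disjoint union of two Frobenius orbits of size $g$. Then choose a CM type $\Phi$ of $L$ with $\#(\Phi\cap H_v)=a$ for a suitable integer $a$ with $1\leqslant a\leqslant g-1$ and $a\neq g/2$, apply \Cref{thm_bourbaki} to obtain an abelian scheme $\mathcal A$ of CM type $(L,\Phi)$ over the ring of integers $\mathcal O_K$ of a $p$-adic field, and let $A$ be the geometric special fibre over $\overline{\bF}_p$. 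By the Shimura--Taniyama formula (\Cref{S-T_formula}) the associated Weil number $\pi$ satisfies $v(\pi)=a/g$ and $\bar v(\pi)=(g-a)/g$; after possibly enlarging the finite field of definition one may assume the products of conjugates of $\pi$ that ought to be powers of $q$ are honestly powers of $q$.

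Next I would check that $A$ is simple with $\End^0(A)=L=Q\cdot R$. By Honda--Tate theory this reduces to showing that no power of $\pi$ lies in a proper subfield of $L$; here hypothesis (2) is decisive, since the maximal Galois group $S_g$ prevents $\pi$ from acquiring extra symmetries. Concretely this is where the computation of the smallest field containing a power of a Weil $q$-number, carried out in \Cref{weil_numbers}, is used, and it yields $\bQ(\pi^N)=L$ for all $N$, hence $\End^0(A)=L$.

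For the standard conjecture of Hodge type, by \Cref{motive_decomposition} and \Cref{SCHT_reduction} it is enough to verify it on each summand $M_{\langle I\rangle}$ of $\frk h(A)$ separately; for the summands whose Tate classes are all Lefschetz this is Milne's \Cref{SCHT_lefschetz}. For the remaining ones the crucial input is that $A$ has Frobenius rank $g-1$ in the sense of \cite[\S 2.1]{DKZ}. This is proved by combining \Cref{S-T_formula} applied at \emph{every} place above $p$ with Kronecker's theorem: a product $\prod_{i\in J}\pi_i$ is a root of unity times $q^{\#J/2}$ precisely when $\#(\tau(J)\cap H_v)=\#J/2$ for all $\tau\in\Gal(\tilde L/\bQ)$, and hypothesis (2) guarantees there are no multiplicative relations among the Frobenius eigenvalues beyond those forced this way; counting the $J$ satisfying this condition pins down the rank of the relation lattice, hence the Frobenius rank $g-1$. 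As Koshikawa observed in \cite{kos22} on the basis of \cite{anc21}, Frobenius rank $g-1$ is exactly the situation in which Ancona's criterion \Cref{thm_ancona} can be brought to bear on the non-Lefschetz summands, each being governed by rank-$2$ relative Chow motives over $\mathcal O_K$ whose Betti realization carries a polarization of Hodge structures and whose special fibre has a two-dimensional space of algebraic classes; this yields the standard conjecture of Hodge type for $A$.

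Finally, among the $J$'s producing Tate classes there is at least one, say $I_0$, that is not a union of complex-conjugate pairs; by \cite{mil02} together with the genericity coming from (2) the summand $M_{\langle I_0\rangle}$ is an exotic submotive of $\frk h(A)$, so it contains a Tate class $t$ that is not Lefschetz. Any CM integral model of $A$ comes from a CM type $\Phi'$ on $L$ with the same Newton invariants as $\Phi$, i.e.\ with $\#(\Phi'\cap H_v)\in\{a,g-a\}$; a finite verification shows that for every such $\Phi'$ and every $J$ in the orbit $\langle I_0\rangle$ one has $\#(J\cap\Phi')\neq\#J/2$, so $M_{\langle I_0\rangle}$ supports no Hodge class on any CM lift of $A$. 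As the decomposition of \Cref{motive_decomposition} is compatible with the specialization $\mathcal O_K\to\overline{\bF}_p$, the class $t$ — which lives in $M_{\langle I_0\rangle}$ — cannot be the reduction of a Hodge class, a fortiori of an algebraic cycle, of any CM integral model of $A$. The main obstacle is the identification of the Frobenius rank as exactly $g-1$: this is the step where hypothesis (2) really does the work, being needed to exclude unexpected relations among the conjugates of $\pi$, and it is what allows the results of \cite{anc21,kos22} to apply; the simplicity statement is of the same arithmetic nature and likewise rests on \Cref{weil_numbers}, whereas the combinatorics of the last step is routine.
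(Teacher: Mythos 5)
Your proposal follows the same broad outline as the paper's proof — build $\mathcal A$ over $\mathcal O_K$ by Honda--Tate from a CM type on $L=Q\cdot R$, use \Cref{motive_decomposition} and \Cref{SCHT_reduction} to reduce to the non-Lefschetz summands, apply \Cref{thm_ancona} on a rank-$2$ exotic summand, and rule out lifting via a CM-type/Hodge-type computation. However, two of your steps are left vague in a way that actually hides where the real work lies, and one of them would go wrong as stated.

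First, you pick ``a suitable integer $a$ with $1\leqslant a\leqslant g-1$ and $a\neq g/2$'' for $\#(\Phi\cap H_v)$. The paper takes $a=1$, and this is not a cosmetic choice: it is precisely what makes the Hodge type of $M_{\langle I_0\rangle}$ asymmetric. For a CM lift $\mathcal B$ with CM type $\Phi'$, the Shimura--Taniyama invariants force $\#\Phi'_v=a$; if one tracks which of the $g$ $Q$-conjugate eigenlines land in $H^{1,0}$, the number of them is $2b+g/2-a$ where $b=\#\{i\leqslant g/2:\sigma_i\in\Phi'\}$ is free. For $a=1$ this lies in $\{g/2-1,g/2+1\}$ and so never equals $g/2$; but for, say, $a=2$ one can have $b=1$ and then the Hodge type is $(g/2,g/2)$, and your claimed ``finite verification'' that $\#(J\cap\Phi')\neq\#J/2$ for every $\Phi'$ fails. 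So the non-liftability conclusion is false for generic $a$; the constraint $a=1$ (or $a=g-1$) must be made explicit, and it is what the paper's \Cref{hodge_type_computation} actually uses.

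Second, you outsource the heart of the argument — that $M_{\langle I_0\rangle}$ is the \emph{only} exotic summand and that it has rank $2$ — to the assertion ``Frobenius rank $g-1$'' plus Koshikawa's observation. This is where all the difficulty is concentrated: the paper proves it by a genuine $S_g$-combinatorics argument (\Cref{lemma_relations}), showing every multiplicative relation $\prod_{i\in I}\pi_i=q^{\#I/2}$ with $\langle I\rangle\neq\langle I_0\rangle$ factors through relations $\pi_i\pi_{i+g}=q$, using transpositions in $S_g$ to derive contradictions (and treating the case $\#I=g$ separately). Your sketch gestures at ``counting the $J$'' and ``no unexpected relations'' but provides no mechanism; and note the paper's introduction explicitly warns that other examples in the paper have Frobenius rank $g/2-1$ or $g-2$ yet still satisfy the conjecture, so Frobenius rank is a coarser invariant than the direct classification of exotic summands that the proof actually requires. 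Finally, the paper also needs a preparatory step you elide: \Cref{relation} (via \Cref{easy_case}) replaces $\pi$ by a power so that $\pi_1\cdots\pi_g=q^{g/2}$ holds on the nose rather than up to a root of unity — your phrase ``after possibly enlarging the finite field of definition'' covers this, but it is precisely the minimality result of \Cref{weil_numbers}, and is also what you need to show $\bQ(\pi)=L$, so it deserves more than a gesture.
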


The proof will be given in \Cref{main_proof}.

\begin{proposition} \label{fields_construction}
  Let $Q$ be an imaginary quadratic field, in which $p$ is
  irreducible and $g\geqslant 4$ be an even integer. Then there exists infinitely many fields $R$ of degree $g$ satisfying the  assumptions of \Cref{main_result}.
\end{proposition}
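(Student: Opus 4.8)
The plan is to construct $R$ as a suitable subfield of a cyclotomic-type field, or more robustly, to produce $R$ by a direct interpolation of local conditions at $p$ together with a generic Galois group. Concretely, I would first fix an auxiliary totally real field $R_0$ of degree $g$ in which $p$ is inert; such a field exists because one can take an irreducible monic polynomial $f_0 \in \bZ[x]$ of degree $g$ which is totally real (all roots real — e.g. a Chebyshev-like polynomial or a product of real linear factors perturbed slightly) and which is irreducible modulo $p$, so that $p$ is inert in $R_0 = \bQ[x]/(f_0)$. Requiring $f_0$ to be irreducible mod $p$ simultaneously forces $R_0 \otimes_\bQ \Qp$ to be the unramified extension of $\Qp$ of degree $g$; since $g$ is even and $Q \otimes_\bQ \Qp$ is the unramified quadratic extension of $\Qp$ (as $p$ is inert, equivalently irreducible, in the imaginary quadratic field $Q$), the unramified extension of degree $g$ contains the unramified quadratic one, so condition (3) is automatic. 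Thus the genuine content is to arrange conditions (1), (2), (3) together, and to get infinitely many such $R$.

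Next I would address the Galois condition (2). The key input is a Hilbert-irreducibility / large-sieve type statement: the set of monic degree-$g$ polynomials over $\bZ$ whose splitting field has Galois group $S_g$ has density $1$, and moreover this remains true after imposing finitely many congruence conditions modulo a fixed modulus, as long as those congruence conditions are themselves compatible with some $S_g$-realization. The congruence condition I want to impose is exactly ``$f \equiv f_0 \pmod{p}$'' (or, more weakly, ``$f \bmod p$ is irreducible of degree $g$''), which pins down the behavior at $p$: it forces $p$ inert in $R = \bQ[x]/(f)$, hence (1), and forces $R \otimes_\bQ \Qp$ unramified of degree $g$, hence (3) as above. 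The ``totally real'' condition is an archimedean (open, not closed) condition: the set of $f$ all of whose roots are real and distinct is a nonempty open subset of the space of coefficients, and it is preserved under small perturbations of $f_0$; combining an open archimedean condition with a closed $p$-adic condition and a density-one Galois condition, one gets infinitely many $f$ satisfying all of them. This is where I would invoke a precise form of Hilbert irreducibility with local conditions (e.g. via Cohen's or Serre's versions, or the explicit density results of Chow–Takahashi for $S_g$), taking care that the local condition at $p$ does not obstruct the generic Galois group — it does not, since an $S_g$-extension can have an inert prime (the Frobenius at $p$ is then a $g$-cycle, which exists in $S_g$).

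The main obstacle I expect is the bookkeeping needed to guarantee that conditions (1)–(3) and totally-realness are mutually compatible with $\Gal = S_g$, rather than any single one of them in isolation: one must exhibit at least one number field (or one $\ell$-adic/archimedean local datum) witnessing compatibility before Hilbert irreducibility can be applied, and then argue that the resulting family is infinite (i.e. that the density-one set, intersected with the nonempty open real condition and the fixed congruence class mod $p$, is still infinite — which follows because each of these is a positive-density or nonempty-open constraint and they are imposed at disjoint places). A secondary, purely technical point is to make sure the ``infinitely many'' is literally infinitely many \emph{non-isomorphic} fields $R$; this follows from a discriminant bound argument: among infinitely many such $f$ the discriminants are unbounded (there are only finitely many number fields of bounded discriminant by Hermite–Minkowski), so infinitely many distinct isomorphism classes of $R$ occur. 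I would close by remarking that for each such $R$, Theorem \ref{main_result} then produces the desired simple abelian variety, completing the deduction.
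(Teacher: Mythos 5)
Your proposal is correct in outline, but it reaches the Galois condition by a genuinely different route than the paper. The paper does not invoke Hilbert irreducibility or any density statement: it uses weak approximation directly on the coefficients of a monic degree-$g$ polynomial, fixing local models at several finite places. In addition to the $p$-adic datum (minimal polynomial of a primitive element of the unramified degree-$g$ extension $R_p$ of $\Qp$, which gives conditions (1) and (3) exactly as you observe) and the archimedean datum (a separable totally real polynomial, giving total reality), the paper imposes two more congruence conditions at auxiliary primes $\ell, \ell' > g$: irreducibility mod $\ell$ (forcing a $g$-cycle in the Galois group) and a factorization with $g-2$ distinct roots mod $\ell'$ (forcing a transposition). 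It then concludes $\Gal(\tilde R/\bQ) = S_g$ from containing these two elements. This is softer than HIT: it is an explicit Chebotarev-style witness rather than an appeal to thin-set density, and it makes ``infinitely many'' come out of varying the approximation rather than needing a separate Hermite--Minkowski step. Your approach trades that explicitness for a heavier but cleaner blackbox (Hilbert irreducibility with prescribed local behavior), and your Hermite--Minkowski argument for distinguishing the resulting fields is a good addition that the paper leaves implicit. One caution applicable to both arguments: a $g$-cycle together with an arbitrary transposition need not generate $S_g$ when $g$ is composite (e.g.\ $\langle (1\,2\,3\,4), (1\,3)\rangle$ is dihedral of order $8$), so if one wants the $\ell'$-condition route one should either pick the transposition compatibly or replace it by a $q$-cycle for a prime $g/2 < q < g$ and invoke Jordan's theorem; your HIT route sidesteps this issue entirely, which is a mild advantage.
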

\begin{proof}
  This is a classical consequence of the approximation theorem. Let us choose an unramified extension of $Q_p$ of degree $g/2$, that we call $R_p$. Let $P_p$ be the minimal polynomial of any primitive element of $R_p$ over $\Qp$. \\
  Let $\ell, \ell' > g$ be distinct prime numbers. We can consider unitary polynomials of degree $g$, $P_\ell$ and $P_{\ell'}$ such that $P_\ell$ is irreducible over $\bF_\ell$ and $P_{\ell'}$ has exactly $g-2$ distinct roots in $\bF_{\ell'}$. \\
  We can also consider a unitary real polynomial $P_\bR$ of degree $g$ having exactly $g$ distinct real roots. \\
  The approximation theorem gives a unitary polynomial $P$ of degree $g$ which is a good approximation of the polynomials constructed previously at $p$, $\ell$, $\ell'$ and at infinity. Let $R$ be a rupture field of $P$, it satisfies conditions $1$ and $3$ because of the $p$-adic conditions. It satisfies condition $2$ because the Galois group of its normal closure is a subgroup of $S_g$ that contains a $g$-cycle because of the $\ell$-adic condition, and a $2$-cycle because of the $\ell'$-adic condition. And these two cycles generate $S_g$. The field $R$ is totally real because $P$ is split over $\bR$ by the condition at infinity.
\end{proof}

\begin{corollary} \label{even_case}
  For each prime $p$ and even integer $g \geqslant 4$, there exist infinitely many simple abelian varieties $A/\overline{\bF}_p$ of dimension $g$ that satisfy the standard conjecture of Hodge type and such that there exists a Tate class on $A$ which is not a Lefschetz class nor liftable as a Hodge class in characteristic zero.
\end{corollary}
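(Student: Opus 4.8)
The plan is simply to feed the totally real fields produced by \Cref{fields_construction} into \Cref{main_result}; apart from an elementary existence statement and a short bookkeeping argument for the word ``infinitely'', nothing new is needed.

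First I would fix the prime $p$ and exhibit an imaginary quadratic field $Q$ in which $p$ is inert. For $p$ odd this is the choice of a squarefree $d>0$ prime to $p$ with $-d$ a non-square modulo $p$ (for instance a suitable prime $d$, the condition being read off via quadratic reciprocity), giving $Q=\bQ(\sqrt{-d})$; for $p=2$ one takes $Q=\bQ(\sqrt{-5})$, in which $2$ is inert. In every case $p$ is inert, hence irreducible, in $Q$, so \Cref{fields_construction} applies and yields infinitely many totally real fields $R$ of degree $g$ satisfying conditions (1)--(3) of \Cref{main_result}.

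Next, for each such $R$, \Cref{main_result} produces a simple abelian variety $A$ over $\overline\bF_p$ with $\End^0(A)=Q\cdot R$ that satisfies the standard conjecture of Hodge type and carries a Tate class which is neither a Lefschetz class nor the class of the reduction of an algebraic cycle on a CM integral model of $A$ --- equivalently (cf.\ the formulation of \Cref{thm_intro}), a Tate class that does not arise by specializing a Hodge class on a CM-lifting, i.e.\ is not ``liftable as a Hodge class in characteristic zero''. Since $Q$ is imaginary and $R$ totally real they are linearly disjoint over $\bQ$, so $Q\cdot R$ is a CM field of degree $2g$; as it embeds in $\End^0(A)$ with $A$ simple, $A$ is of CM type and $\dim A=g$ (indeed in \Cref{main_result} the variety $A$ is built from the reduction of a CM abelian variety of CM type on $Q\cdot R$, which has dimension $g$).

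It remains to check that varying $R$ gives infinitely many abelian varieties up to isogeny. An isogeny $A\to A'$ induces an isomorphism of $\bQ$-algebras $\End^0(A)\xrightarrow{\ \sim\ }\End^0(A')$, hence $Q\cdot R\cong Q\cdot R'$, and passing to the maximal totally real subfield recovers $R\cong R'$. Since the construction of \Cref{fields_construction} produces infinitely many fields $R$ up to isomorphism --- for example by letting the auxiliary prime $\ell$ in its proof range over infinitely many primes, which forces infinitely many distinct ramification patterns --- we obtain infinitely many pairwise non-isogenous (hence non-isomorphic) abelian varieties $A$ of dimension $g$ with all the required properties. The only steps demanding any care are the elementary existence of $Q$ with $p$ inert (with $p=2$ treated separately) and this last point, that ``infinitely many $R$'' really delivers infinitely many isogeny classes and not merely varieties with coinciding invariants; the genuine content is entirely in \Cref{main_result} and \Cref{fields_construction}, so there is no serious obstacle.
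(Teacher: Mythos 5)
Your approach is exactly the paper's (the paper leaves \Cref{even_case} without an explicit proof, treating it as immediate from \Cref{main_result} plus \Cref{fields_construction}, as you do), and your extra care about ``infinitely many'' via the endomorphism algebra and the maximal totally real subfield is a sound and welcome amplification. There is, however, one concrete arithmetic slip in the $p=2$ case: $2$ is \emph{not} inert in $\bQ(\sqrt{-5})$ --- since $-5\equiv 3\pmod 4$ the discriminant is $-20$ and $2$ ramifies. What you need is $d\equiv 5\pmod 8$, so take for instance $Q=\bQ(\sqrt{-3})$ (or $\bQ(\sqrt{-11})$, $\bQ(\sqrt{-19})$, etc.), where $2$ is genuinely inert. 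With that correction the argument is complete, and the remaining steps (odd $p$ via a quadratic nonresidue $-d$; feeding the fields of \Cref{fields_construction} into \Cref{main_result}; recovering $R$ as the maximal totally real subfield of $\End^0(A)$ to distinguish isogeny classes) are all correct.
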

\begin{proof}
  Let $p$ be a prime number, $g \geqslant 4$ an even integer, and be $Q$ an imaginary quadratic field in which $p$ is inert. Let $(R_i)_{i \in \bN}$ be an infinite familly of distinct fields of degree $g$ satisfying the assumptions of \Cref{main_result}, constructed in \Cref{fields_construction}. Then consider the infinite family of fields $L_i = Q R_i$. If $L_i=L_j$, then their maximal totally real subfields are equal, hence $R_i = R_j$ and $i=j$.

  Now, consider $A_i/\overline \bF_p$ the abelian variety constructed in \Cref{main_result} associated to $Q$ and $R_i$. The elements of the family $(A_i)_{i \in \bN}$ are distinct because if $A_i$ is isogenous to $A_j$, then $\End^0(A_i) \cong \End^0(A_j)$ which means that $L_i \cong L_j$ so that $i=j$. Moreover, the $A_i$'s satisfy the assumptions of \Cref{even_case} by hypothesis of \Cref{main_result}.
\end{proof}
\begin{corollary}
  For each prime $p$ and odd integer $g \geqslant 5$, there exist infinitely many abelian varieties $A/\overline{\bF}_p$ of dimension $g$ that satisfy the standard conjecture of Hodge type and such that there exists a Tate class on $A$ which is not a Lefschetz class nor liftable as a Hodge class in characteristic zero.
\end{corollary}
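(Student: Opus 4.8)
The plan is to reduce the odd case $g\geqslant 5$ to the even case \Cref{even_case} by multiplying with a suitably generic elliptic curve. Since $g$ is odd and $\geqslant 5$, the integer $g-1$ is even and $\geqslant 4$, so \Cref{even_case} provides a simple abelian variety $A_0/\overline{\bF}_p$ of dimension $g-1$ satisfying the standard conjecture of Hodge type and carrying a Tate class $\xi\in H^{2m}(A_0,\Ql)$ that is neither Lefschetz nor the reduction of a cycle on a CM integral model. Let $\tilde L$ be a field over which $\frk h^1(A_0)$ splits into lines as in \Cref{motive_decomposition}. I would then take $E/\overline{\bF}_p$ ordinary with $\End^0(E)=Q'$ an imaginary quadratic field \emph{not} contained in $\tilde L$; such $Q'$ exist, in fact infinitely many, since $p$ splits in infinitely many imaginary quadratic fields while only finitely many of them embed into $\tilde L$. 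After enlarging the base finite field we may assume $A_0$ and $E$ are defined over a common $k_0$ with the Frobenius of $E$ not rational. Set $A:=A_0\times E$, of dimension $g$. As $A_0$ is simple of dimension $\neq 1$ we have $\Hom(A_0,E)=0$, hence $\End^0(A)=\End^0(A_0)\times Q'$ and $\mathrm{NS}(A)_{\bQ}=\mathrm{NS}(A_0)_{\bQ}\oplus\mathrm{NS}(E)_{\bQ}$.

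For the standard conjecture of Hodge type on $A$, I would use that $\frk h^1(A)=\frk h^1(A_0)\oplus\frk h^1(E)$ and both summands have complex multiplication, so the construction of \Cref{motive_decomposition} applies over $\tilde L\cdot Q'$ and, after Galois descent, decomposes $\frk h(A)$ into pieces $M_{\langle I\rangle}$ indexed by Galois orbits of subsets $I\subseteq\{1,\dots,2g\}$, which we split along the two factors as $\{1,\dots,2(g-1)\}\sqcup\{2g-1,2g\}$. By \Cref{SCHT_reduction} it suffices to treat each $M_{\langle I\rangle}$; writing $I=I_0\sqcup I_E$ with $I_E\subseteq\{2g-1,2g\}$ and using that the two Frobenius eigenvalues of $E$ multiply to $q$, there are three cases: if $I_E=\emptyset$ the piece is pulled back from $A_0$ along $\mathrm{pr}_1$; if $I_E=\{2g-1,2g\}$ it is a Tate twist $M_{\langle I_0\rangle}\otimes\frk h^2(E)$ of such a piece; and if $|I_E|=1$ then $M_{\langle I\rangle}$ can contain a Tate class only if $\pi_E^{\pm1}\prod_{i\in I_0}\pi_i=q^{|I|/2}$, which would force $\pi_E\in\tilde L$ — excluded by the choice of $Q'$ — so these pieces carry no algebraic classes. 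Hence every exotic submotive of $\frk h(A)$ is rank $2$, being pulled back from or a Tate twist of an exotic submotive of $A_0$, so $A$ is mildly exotic in the sense of \Cref{WT} and the conjecture follows from \Cref{thm_intro2}; concretely, \Cref{SCHT_lefschetz} handles the Lefschetz part and \Cref{thm_ancona} applies to each rank-$2$ exotic piece, whose Betti realization in the CM-lift $\tilde A_0\times\tilde E$ of $A$ is a polarizable Hodge structure.

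Next I would exhibit the required Tate class as $\eta:=\mathrm{pr}_1^*\xi\in H^{2m}(A,\Ql)$. It is Tate, and it is not Lefschetz: using $\mathrm{NS}(A)_{\bQ}=\mathrm{NS}(A_0)_{\bQ}\oplus\mathrm{NS}(E)_{\bQ}$ together with $\mathrm{pr}_2^*\mathcal Z^1(E)\cdot\mathrm{pr}_2^*\mathcal Z^1(E)=0$, the component of $\mathcal L^m(A)$ in the Künneth summand $H^{2m}(A_0)\otimes H^0(E)$ is exactly $\mathrm{pr}_1^*\mathcal L^m(A_0)$, so $\eta\in\mathcal L^m(A)$ would force $\xi\in\mathcal L^m(A_0)$, a contradiction. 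Nor is $\eta$ liftable: any CM integral model $\tilde A$ of $A$ has $\End^0(\tilde A)$ embedding into $\End^0(A_0)\times Q'$, which contains no field of degree $2g$, so $\tilde A\sim\tilde A_0\times\tilde E$ with $\tilde A_0$ a CM integral model of $A_0$; the $H^{2m}(\tilde A_0)\otimes H^0(\tilde E)$-component of a cycle on $\tilde A$ reducing to $\eta$ would then be a cycle on $\tilde A_0$ reducing to $\xi$, contradicting \Cref{even_case}. Finally, keeping $A_0$ fixed and letting $Q'$ range over the infinitely many admissible imaginary quadratic fields produces infinitely many pairwise non-isogenous examples $A=A_0\times E$, told apart by their elliptic isogeny factor.

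The step I expect to be the main obstacle is the first part of the second paragraph: making rigorous that the motivic formalism of \Cref{motive_decomposition} and the reduction of \Cref{SCHT_reduction} apply to the non-simple variety $A$, in particular that the auxiliary pairing on the pieces $M_{\langle I\rangle}$ pulled back from $A_0$ agrees, up to a positive scalar, with the pairing computed on $A_0$, and that the polarization needed for \Cref{thm_ancona} transports to the Tate-twisted pieces; on the arithmetic side, the other thing to check carefully is that the genericity of $E$ (namely $Q'\not\subseteq\tilde L$) genuinely rules out all new exotic classes.
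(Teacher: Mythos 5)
Your proof is correct and its overall shape (reduce the odd case to the even case by forming a product with an elliptic curve) matches the paper. The key difference is the choice of the auxiliary elliptic curve. The paper takes $E$ \emph{supersingular}, and observes that $A'\times E$ satisfies the standard conjecture of Hodge type if and only if $A'$ does; this is terse but works because, after enlarging the base field, the Frobenius on $\frk h^1(E)$ is the scalar $q^{1/2}$, and for the particular $A'$ produced in \Cref{main_result} the lattice of multiplicative relations among the Frobenius eigenvalues is generated by vectors of even coordinate sum (the pairs $\pi_j\pi_{j+g'}=q$ and the $Q$-norm relation of even length $g'$), so the odd-length Künneth cross-terms $\frk h^{2i-1}(A')\otimes\frk h^1(E)$ carry no Tate classes. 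You instead take $E$ \emph{ordinary} with $\End^0(E)=Q'$ chosen so that $Q'\nsubseteq\tilde L$; then the cross-terms are killed for the more elementary reason that $\pi_E\notin\tilde L$, which bypasses any analysis of the relation lattice of $A'$ and works ``generically'' rather than relying on the specific combinatorics of \Cref{lemma_relations}. This is a genuine simplification at that point; on the other hand, it forces you to re-run the \Cref{motive_decomposition}/\Cref{SCHT_reduction}/\Cref{thm_ancona} machinery on the non-simple product, which the paper sidesteps by appealing directly to the equivalence for supersingular $E$.

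Two small points to tidy up. First, $A=A_0\times E$ is not geometrically simple, so it is not literally ``mildly exotic'' in the sense of \Cref{meav}, and you cannot cite \Cref{thm_intro2} or \Cref{CSTH_me} for it; your parenthetical appeal to \Cref{SCHT_lefschetz} and \Cref{thm_ancona} on each rank-$2$ piece is the correct route and is what you should keep, but the ``mildly exotic'' phrasing should be dropped or replaced. Second, when you apply \Cref{motive_decomposition} you are implicitly extending it to CM by the \'etale $\bQ$-algebra $L\times Q'$ rather than a single CM field; this is harmless (work over the compositum of $\tilde L$ and $Q'$ and descend along the product Galois group), but should be flagged since the statement in the paper is for a CM field. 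Neither of these affects the validity of the argument.
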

\begin{proof}
  Let $p$ be a prime number, $g \geqslant 5$ an odd integer, and be $Q$ an imaginary quadratic field in which $p$ is inert. Then $g'=g-1 \geqslant 4$ is even, let $(R_i)_{i \in \bN}$ be an infinite familly of distinct fields of degree $g'$ satisfying the assumptions of \Cref{main_result}, constructed in \Cref{fields_construction}. Then consider the infinite family of fields $L_i = Q R_i$. If $L_i=L_j$, then their maximal totally real subfields are equal, hence $R_i = R_j$ and $i=j$.
  
  Consider $A'_i$ the abelian variety of dimension $g'$ given by \ref{main_result}. Let $E$ be a supersingular elliptic curve, then $A_i = A'_i \times E$ is an abelian variety of dimension $g$, and it satisfies the standard conjecture of Hodge type. Moreover the pullback of a non-liftable Tate class in $H^{g'}(A'_i)$ by the projection $A_i \to A'_i$ is a Tate class in $H^{g'}(A_i)$ which is non-liftable.

  Moreover, if $A_i$ is isogenous to $A_j$, then $A'_i$ is isogenous to $A'_j$. Hence $\End^0(A'_i) \cong \End^0(A'_j)$, which means that $L_i \cong L_j$ and $i = j$.
\end{proof}
\begin{remark}
	In the proof that there is no lift of the Tate class to a CM abelian scheme of characteristic $0$, the complex multiplication plays a role through Shimura-Taniyama formula \ref{S-T_formula}. \\
	But it can also be proved that there is no lift of the Tate class to any abelian scheme of characteristic $0$. Indeed, results of Kisin--Madapusi--Shin about Honda--Tate for Shimura varieties of Hodge type \cite{KMS} reduce the question to the CM case.
\end{remark}

\section{A minimality result about Weil numbers} \label{weil_numbers}

	Let $p$ be a prime number, $L/K$ an extension of number fields, and $x$ an element of $L$. If $x^k$ is in $K$ for some $k$, then the $p$-adic valuations of $x$ in $L$ satisfy certain compatibility conditions. Namely, let $v$ be a $p$-adic valuation on $K$, and $v_1, \dots, v_n$ be the extensions of $v$ to $L$. If $x^k \in K$ then $v_1(x) = v_2(x) = \dots = v_n(x) = v(x)$. The goal of this section is to prove \Cref{weil_numbers_minimality} which is about the converse.
	\begin{definition}
		We will say that $x \in L$ is $p$-potentially in $K$ when for all $p$-adic valuations $v$ of $K$, and all extensions $v_1, v_2$ of $v$ to $L$ we have 
		\[v_1(x) = v_2(x). \]
		To say it differently, the valuation of $x$ is independent of the extension $w$ of $v$ to $L$.
	\end{definition}
 \begin{proposition} \label{weil_numbers_minimality}
	Let $\pi \in L$ be a Weil $q$-number where $q$ is a power of $p$. If $\pi$ is $p$-potentially in $K$, then $\pi^k \in K$ where $k$ is the order of the group of roots of unity in a Galois closure $\tilde L$ of $L$.
\end{proposition}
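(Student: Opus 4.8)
The plan is to reduce the statement to the claim that $\sigma(\pi)/\pi$ is a root of unity in $\tilde L$ for every $\sigma$ in $\Gal(\tilde L/K)$. This makes sense and suffices: since $\tilde L/\bQ$ is Galois and $K\subseteq L\subseteq\tilde L$, the extension $\tilde L/K$ is Galois; if each $\sigma(\pi)/\pi$ lies in the group $\mu(\tilde L)$ of roots of unity of $\tilde L$, then $(\sigma(\pi)/\pi)^k=1$ by the definition of $k$, i.e. $\sigma(\pi^k)=\pi^k$ for all $\sigma\in\Gal(\tilde L/K)$; as $\pi^k\in L\subseteq\tilde L$ is then fixed by $\Gal(\tilde L/K)$, Galois theory gives $\pi^k\in K$.

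To show that $\zeta_\sigma:=\sigma(\pi)/\pi\in\tilde L$ is a root of unity I would apply Kronecker's theorem: it is enough to check that $\zeta_\sigma$ is an algebraic integer (in fact a unit) all of whose archimedean absolute values equal $1$. The archimedean part is immediate: for any embedding $\tau\colon\tilde L\hookrightarrow\bC$, both $\tau$ and $\tau\circ\sigma$ restrict to complex embeddings of $\bQ(\pi)$, so $|\tau(\zeta_\sigma)|=|(\tau\circ\sigma)(\pi)|/|\tau(\pi)|=q^{1/2}/q^{1/2}=1$ because $\pi$ is a Weil $q$-number.

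For the finite-place part I would fix one extension of the $p$-adic valuation of $\bQ$ to $\overline{\mathbb{Q}}$ and work with restrictions, so all valuations are normalized compatibly, and then verify $w(\sigma(\pi))=w(\pi)$ for every finite place $w$ of $\tilde L$. If $w\nmid p$, then $w(\pi)=0$ since a Weil $q$-number is a unit away from $p$ (its norm to $\bQ$ is, up to sign, a power of $p$), and likewise $w(\sigma(\pi))=(w\circ\sigma)(\pi)=0$ because $w\circ\sigma$ is again a finite place of $\tilde L$ not lying over $p$. If $w\mid p$, put $v:=w|_K$, a $p$-adic valuation of $K$; then $w|_L$ and $(w\circ\sigma)|_L$ are two extensions of $v$ to $L$ — here one uses that $\sigma$ fixes $K$ pointwise — so the hypothesis that $\pi$ is $p$-potentially in $K$ gives $w(\pi)=(w|_L)(\pi)=((w\circ\sigma)|_L)(\pi)=w(\sigma(\pi))$. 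Thus $w(\zeta_\sigma)=0$ at every finite place, hence $\zeta_\sigma\in\mathcal{O}_{\tilde L}^\times$, and Kronecker's theorem finishes the proof.

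The only genuinely fiddly point is the bookkeeping with normalizations of the $p$-adic valuations, so that ``an extension of $v$ to $L$'' appearing in the definition of $p$-potentially in $K$ is literally the restriction to $L$ of a place of $\tilde L$; fixing once and for all a place of $\tilde L$ above $p$ (equivalently an embedding $\tilde L\hookrightarrow\Qpbar$) and taking restrictions makes this transparent. Beyond that, the argument is just the two valuation computations above together with the classical theorem of Kronecker characterizing roots of unity among algebraic integers on the unit circle.
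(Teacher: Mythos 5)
Your proof is correct and follows essentially the same path as the paper's: both arguments come down to showing that for $\sigma\in\Gal(\tilde L/K)$ the element $\sigma(\pi)/\pi$ is a root of unity (via Kronecker's theorem, the archimedean bound $|\sigma(\pi)|=|\pi|=q^{1/2}$, triviality of Weil numbers away from $p$, and the $p$-potentially hypothesis together with transitivity of $\Gal(\tilde L/K)$ on places above a place of $K$), and then conclude $\pi^k\in K$ by Galois descent. The only cosmetic difference is that you inline the valuation computations directly, whereas the paper packages them through the ``slope-vector'' machinery $v_K\colon K^\times\to V(K)$ and the two auxiliary lemmas (\Cref{injectivity_mod_torsion} and \Cref{galois_description}); that formalism is not needed for the proposition itself, but feeds into the remark that $\bigcap_{k}\bQ(\pi^k)$ is the smallest field in which $\pi$ is $p$-potentially contained.
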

The proof will be given at the end of the section.
\begin{corollary} \label{easy_case}
	Assume that for all $p$-adic valuation $v$ of $K$, there is a unique extension of $v$ to $L$. Then for all Weil $q$-number $\pi \in L$, there is $k \in \bN_{>0}$ such that $\pi^k \in K$.
\end{corollary}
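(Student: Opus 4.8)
The plan is to observe that the hypothesis of this corollary is nothing more than a (degenerate) instance of being \emph{$p$-potentially in $K$}, so that the statement is an immediate consequence of \Cref{weil_numbers_minimality}. First I would unwind the definitions: given a $p$-adic valuation $v$ of $K$, let $v_1,\dots,v_n$ be its extensions to $L$. The assumption says $n = 1$, so the condition $v_1(\pi) = v_2(\pi) = \dots = v_n(\pi)$ required for "$p$-potentially in $K$" holds vacuously. Hence every Weil $q$-number $\pi \in L$ is automatically $p$-potentially in $K$.

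Then I would simply apply \Cref{weil_numbers_minimality}: since $\pi \in L$ is a Weil $q$-number with $q$ a power of $p$, and $\pi$ is $p$-potentially in $K$, we get $\pi^k \in K$, where $k$ can be taken to be the order of the group of roots of unity in a Galois closure $\tilde L$ of $L$. This $k$ is a finite positive integer, so in particular $k \in \bN_{>0}$, which is exactly the assertion.

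There is essentially no obstacle here; the only care needed is to confirm that the ambient hypotheses of \Cref{weil_numbers_minimality} are in force, namely that $L/K$ is an extension of number fields and $q$ is a power of $p$, both of which hold throughout the section. If one preferred an argument not routing through \Cref{galois_description}, one could instead copy the proof of \Cref{weil_numbers_minimality} verbatim: uniqueness of extensions makes $\Gal(\tilde L/K)$ act (trivially) transitively on the valuations of $\tilde L$ above each $v \in P(K)$, so $v_{\tilde L}(\pi)$ is $\Gal(\tilde L/K)$-invariant, whence $\mathbb{Q}(\pi^k) \subseteq K$ by \Cref{injectivity_mod_torsion} and Galois theory. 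But the one-line deduction from \Cref{weil_numbers_minimality} is the cleanest route, and is what I would write.
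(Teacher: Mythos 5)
Your proposal is correct and matches the paper's (implicit) argument: the paper states \Cref{easy_case} as an immediate corollary of \Cref{weil_numbers_minimality} with no separate proof, precisely because the uniqueness hypothesis makes the condition of being $p$-potentially in $K$ hold vacuously, as you observe. Your one-line deduction is exactly the intended reasoning.
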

\begin{remark}
	\Cref{weil_numbers_minimality} tells us that the field $\cap_{k \in \bN_{>0}} \bQ(\pi^k)$ is the smallest subfield $K$ of $L$ such that $\pi$ is $p$-potentially in $K$.
\end{remark}
The following definitions are reminiscent of slope vectors as presented in \cite[\S 3]{DKZ}, but are more functorial.
\begin{definition}
	Let $K$ be a field, denote by $P(K)$ the set of $p$-adic valuations on $K$ that are normalized so that $v(p) = 1$. We define the $\bQ$-vector space $V(K) = \bQ^{P(K)}$ of $\bQ$-valued functions on $P(K)$.
\end{definition}
\begin{definition} \label{valuation_vector}
Let $K$ be a field, we define $v_K: K^\times \to V(K)$ that sends a nonzero element $x$ to $(v(x))_{v \in P(K)}$. 
\end{definition}
\begin{proposition} \label{valuation_functoriality}
	Let $\sigma:K \to L$ be a map of number field (e.g. a field extension $L/K$), the map
	\begin{align*} 
	r_{\sigma}: P(L) &\to P(K) \\
	 v &\mapsto v \circ \sigma, 
	\end{align*}
	is surjective, and the $\bQ$-linear map
	\begin{align*} 
	i_{\sigma}: V(K) &\to V(L) \\
	 f &\mapsto f \circ r_{\sigma},
	\end{align*}
	is injective.
	Moreover the following square is commutative:
	\begin{center}
	 \begin{tikzcd}
	  K^\times \arrow[r, "v_K"] \arrow[d] & V(K) \arrow[d, "i_{\sigma}"] \\
	  L^\times \arrow[r, "v_L"] & V(L).
	 \end{tikzcd}
	\end{center}
\end{proposition}
\begin{proof}
	The surjectivity of $r_{\sigma}$ comes from the standard fact that for all valuation $v$ of $K$, there exists an extension $v'$ of $v$ to $L$. The injectivity of $i_{\sigma}$ follows from the surjectivity of $r_{\sigma}$. The commutativity of the diagram comes from the fact that for each $x\in K$ and any valuation $v$ of $L$ we have $v\circ \sigma(x) = v(\sigma(x))$.
\end{proof}
 \begin{remark}
 	For a number field $L$, and $\tilde L$ its Galois closure, with Galois group $G$ over $\bQ$, $G$ acts via \Cref {valuation_functoriality} on $P(\tilde L)$ and on $V(\tilde L)$. 
 \end{remark}
 The following lemma about Weil number is well known, see \cite[Proposition 3.1]{DKZ} and \cite[Theorem 3.4]{Chi}.
\begin{proposition} \label{injectivity_mod_torsion}
	Let $K$ be a field, the map $v_K$ of \Cref{valuation_vector} is injective on Weil $q$-numbers modulo torsion. More precisely, let $k$ be the order of the group of roots of unity in $K$, then for any Weil $q$-numbers $\pi_1$ and $\pi_2$ such that $v_K(\pi_1) = v_K(\pi_2)$, we have $\pi_1^k=\pi_2^k$.
\end{proposition} 
\begin{proof}
	If $v_K(\pi_1) = v_K(\pi_2)$, then $\frac{\pi_1}{\pi_2}$ is a unit in $K$. Moreover it is of module $1$ for all places at infinity, hence $\frac{\pi_1}{\pi_2}$ is a root of unity by a theorem of Kronecker. We conclude that $\pi_1^k = \pi_2^k$.
\end{proof}

 \begin{proposition} \label{galois_description}
 	Let $L$ be a number field with Galois closure $\tilde L$, $G = \Gal(\tilde L/\bQ)$, $\pi \in L$ be a Weil $q$-number and $F = \bQ(\pi^k)$ where $k$ is the order of the group of roots of unity in $\tilde L$. Consider the vector $v_{\tilde L} (\pi) \in V(\tilde L)$ of \Cref{valuation_vector} and the action of $G$ on $V(\tilde L)$. They satisfy
 	\[\Fix_G(v_{\tilde L}(\pi)) = \Fix_G(F). \]
 \end{proposition}
 \begin{proof}
 	An element $\sigma \in G$ is a map $\sigma: \tilde L \to \tilde L$, hence it induces maps $r_\sigma$ and $i_\sigma$ via \Cref{valuation_functoriality}. By functoriality, we have a commutative diagram:
 	\begin{center}
 \begin{tikzcd}
  \tilde L^\times \arrow[r, "v_{\tilde L}"] \arrow[d, "\sigma"] & V(\tilde L) \arrow[d, "i_{\sigma}"] \\
  \tilde L^\times \arrow[r, "v_{\tilde L}"] & V(\tilde L).
 \end{tikzcd}
\end{center}
 If $\sigma \in \Fix_G(F)$ then $\sigma(\pi^k) = \pi^k$. Hence if we apply $v_{\tilde L}$ to the previous equality, the previous diagram tells us that $i_\sigma(v_{\tilde L}(\pi^k))=v_{\tilde L}(\pi^k)$, hence $\sigma \in \Fix_G(v_{\tilde L}(\pi))$. On the other hand, if $\sigma \in \Fix_G(v_{\tilde L}(\pi))$, then we get $v_{\tilde L}(\sigma(\pi)) = v_{\tilde L}(\pi)$. The algebraic integers $\pi$ and $\sigma(\pi)$ are Weil $q$-numbers, by \Cref{injectivity_mod_torsion} we get $\pi^k = \sigma(\pi^k)$, and $\sigma \in \Fix_G(F)$.
 \end{proof}
 With the last proposition, we can prove \Cref{weil_numbers_minimality} pretty efficiently (one can also give a proof of \Cref{weil_numbers_minimality} by doing a computation with the explicit factorization of $\pi$ as a product of prime ideals).
\begin{proof}[Proof of \Cref{weil_numbers_minimality}]
	The condition that $\pi$ is $p$-potentially in $K$ implies that 
	\[\Fix_G(K) \subset \Fix_G(v_{\tilde L}(\pi))\]
	because $\Fix_G(K) = \Gal(\tilde L/K)$ acts transitively on valuations of $\tilde L$ above any valuation $v$ of $K$. But $\Fix_G(v_{\tilde L}(\pi)) = \Fix_G(F)$ by \Cref{galois_description}, where $F = \bQ(\pi^k)$ and $k$ is the order of the group of roots of unity in $\tilde L$. Hence $\Fix_G(K) \subset \Fix_G(F)$, so that $F \subset K$ by Galois theory. We conclude that $\pi^k \in K$.
\end{proof}
 
\section{Proof of the main Theorem} \label{main_proof}

The goal of this section is to prove \Cref{main_result}. First, in  \Cref{CMtype_construction} we build a CM-field $L$ with a CM-type $\Phi$ satisfying some $p$-adic conditions. Then in \Cref{abelianscheme_construction}, we prove that there exists an abelian scheme $\mathcal A$ over the ring of integers $O_K$ of a $p$-adic field $K$, with complex multiplication by $L$ and CM-type $\Phi$. We study properties of $L$ and $\Phi$ in \Cref{end_computation} and \Cref{gal_computation_2}, then we explain how to deduce properties of $\mathcal A$ in \Cref{hodge_type_computation}. In \Cref{lemma_relations} we obtain a full description of the exotic classes in the $\ell$-adic cohomology of $\mathcal A$. The proof of \Cref{main_result} follows from this description.

  Let $g \geqslant 4$ be an even integer, and $Q$, $R$ be number fields as in \Cref{main_result}. We will start by studying the $p$-adic valuations of $L:=Q \cdot R$. \\
  The field $Q$ has a canonical involution $\tau_Q$ induced by \textit{complex conjugation}.
  It induces an involution $\tau_L$ of $L$ that we will write $x
  \mapsto \overline x$, and it extends to an automorphism of $L \otimes \Qp$.
  \begin{lemma} \label{lemma_ideals}
    There are exactly two prime ideals over $p$ in $L$, they are unramified of residual degree $g$ and they are swapped by
    complex conjugation $\tau_L$.
  \end{lemma}
  \begin{proof}
    As $Q$ is totally imaginary and $R$ is totally real, we have that $L = Q \otimes_{\bQ} R$. Hence
    \begin{align*}
      L \otimes \Qp &= \left (Q \otimes_{\bQ} \Qp \right ) \otimes_{\Qp}
                      \left (R \otimes_{\bQ} \Qp \right ) \\
                    &= Q_p \otimes_{\Qp} R_p \\
                    &= R_p \times R_p
    \end{align*}
    where the second equality comes from the behavior of $Q$ and $L$ at the prime $p$, and the last one comes from the embedding $Q_p \hookrightarrow R_p$ (take a primitive element of $Q_p$, its minimal polynomial splits in $R_p$ and the equality follows from the chinese remainder theorem).  \\
    It can be seen that $\tau_Q$ swaps the factors above, hence we have two unramified prime ideals above $p$ in $L$ and they are swapped by complex conjugation and the residual degrees are both $g$.
  \end{proof}
  \begin{definition}
  	We will denote by $v$ and $\overline v$ the discrete valuations corresponding to the prime ideals of \Cref{lemma_ideals}.
  \end{definition}
  
  \begin{lemma} \label{CMtype_construction}
    There is a CM-type $\Phi$ for $L$ (as defined in \ref{def_CM-types}) such that
    \begin{align*}
      \#\Phi_v &= 1 \\
      \# \Phi_{\overline v} &= g-1
    \end{align*}
    where $H_{v^*} = \{ \sigma:L \to \Qpbar| \; \sigma \textrm{ factors through } L_{v^*}\}$ and
      $\Phi_{v^*} = \Phi \cap H_{v^*}$ for $v^*\in\{v, \overline v\}$.
  \end{lemma}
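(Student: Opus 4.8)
The plan is to build $\Phi$ explicitly, choosing one embedding out of each complex-conjugate pair while keeping track of which place above $p$ it lies over. Write $\overline{\sigma} := \sigma \circ \tau_L$ for $\sigma \in \Hom(L, \Qpbar)$; recall that a subset $\Phi \subset \Hom(L, \Qpbar)$ is a CM-type for $L$ exactly when $\Phi \sqcup \overline{\Phi} = \Hom(L, \Qpbar)$.

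First I would record the relevant counts of embeddings coming from the previous lemma: since $L_{\frkp} \cong L_{\overline \frkp} \cong R_p$ and $[R_p : \Qp] = g$, we have $\#H_v = \#H_{\overline v} = g$ and $\Hom(L, \Qpbar) = H_v \sqcup H_{\overline v}$, a set with $2g$ elements. Then comes the key observation: the previous lemma says $\tau_L$ swaps the two primes $\frkp$ and $\overline \frkp$ — equivalently, the induced automorphism of $L \otimes \Qp = R_p \times R_p$ exchanges the two factors — so an embedding $\sigma$ factors through $L_{\frkp}$ if and only if $\overline{\sigma}$ factors through $L_{\overline \frkp}$. Hence $\sigma \mapsto \overline{\sigma}$ restricts to a bijection of $H_v$ onto $H_{\overline v}$; in particular $H_v \cap H_{\overline v} = \emptyset$ and $\overline{H_v} = H_{\overline v}$.

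Then I would fix any $\sigma_0 \in H_v$ and set
\[ \Phi := \{\sigma_0\} \;\cup\; \bigl(H_{\overline v} \setminus \{\overline{\sigma_0}\}\bigr). \]
Its complex conjugate is $\overline{\Phi} = \{\overline{\sigma_0}\} \cup (H_v \setminus \{\sigma_0\})$, so $\Phi \sqcup \overline{\Phi} = H_v \sqcup H_{\overline v} = \Hom(L, \Qpbar)$, which shows $\Phi$ is a CM-type for $L$. By construction $\Phi_v = \Phi \cap H_v = \{\sigma_0\}$ and $\Phi_{\overline v} = \Phi \cap H_{\overline v} = H_{\overline v} \setminus \{\overline{\sigma_0}\}$, giving $\#\Phi_v = 1$ and $\#\Phi_{\overline v} = g-1$, as required.

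There is no genuine obstacle in this argument; the only point deserving attention is that $\tau_L$ truly interchanges $H_v$ and $H_{\overline v}$ rather than stabilizing either, which is precisely what the previous lemma furnishes via $\frkp \neq \overline \frkp$. The same recipe would produce, for any $0 \leqslant k \leqslant g$, a CM-type with $\#\Phi_v = k$; the value $k=1$ is the one tailored for the next step, where \Cref{S-T_formula} will then force the Frobenius $\pi$ of the associated abelian scheme to satisfy $v(\pi)/v(q_0) = 1/g$ and $\overline v(\pi)/\overline v(q_0) = (g-1)/g$.
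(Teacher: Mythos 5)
Your proof is correct and amounts to the same argument the paper uses: the paper simply cites the combinatorial result in \S 4 of Tate's Bourbaki talk, which asserts that any prescription of integers $n_v$ with $n_v + n_{\overline v} = [L_v:\Qp]$ is realized by some CM-type, and your explicit construction (pick $\sigma_0 \in H_v$, set $\Phi = \{\sigma_0\} \cup (H_{\overline v} \setminus \{\overline{\sigma_0}\})$ using the conjugation bijection $H_v \leftrightarrow H_{\overline v}$) is precisely how that combinatorial fact is established in the case $v \neq \overline v$. The one point you rightly flag as needing attention — that $\tau_L$ genuinely interchanges $H_v$ and $H_{\overline v}$ — is supplied by the preceding lemma on the splitting of $p$ in $L$, so the proof is complete.
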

  \begin{proof}
    Let $n_v=1$ and $n_{\overline v} = g-1$, we have that $n_v,n_{\overline v} \in \bN$ and $n_v + n_{\overline v} =
    g = [L_v:\Qp]$. By a combinatorial result described in \cite[\S 4]{tat71} we can
    find such a CM-type $\Phi$ for $L$.
  \end{proof}
    
  \begin{proposition} \label{abelianscheme_construction}
  There exists a $p$-adic field $K$ and an abelian scheme $\mathcal A$ over the ring of integers $\mathcal O_K$ of $K$, with complex multiplication by $L$ such that the CM-type is $\Phi$ satisfying \Cref{CMtype_construction}. Moreover, there is $\pi \in L$ which induces the Frobenius on the special fiber of $\mathcal A$, and such that
  \begin{align}  \label{Shimura-Taniyama}
    {v(\pi)} &= \frac{ \#\Phi_v}{\#H_v} = \frac{n_v}{g} = \frac{1}{g} \\
    {\overline v(\pi)} &= \frac{ \#\Phi_{\overline
                                              v}}{\#H_{\overline v}} = \frac{n_{\overline
                                              v}}{g} = 1-\frac{1}{g},
  \end{align}
  for the normalization $v(q) = \overline v(q) = 1$ where $q$ is the cardinal of the residue field of $K$.
  \end{proposition}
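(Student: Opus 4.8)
The plan is to quote \Cref{thm_bourbaki} and \Cref{S-T_formula} and then substitute the combinatorial data of \Cref{CMtype_construction}.

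First, I would apply \Cref{thm_bourbaki} to the CM-pair $(L,\Phi)$ produced in \Cref{CMtype_construction}: this yields a $p$-adic field $K$ together with an abelian scheme $\mathcal A$ over $\mathcal O_K$ of CM-type $(L,\Phi)$, which is the first assertion. Replacing $K$ by a finite extension if necessary, I may assume its residue field is large enough that the Frobenius of the special fibre lies in the image of the CM-action $L \hookrightarrow \End^0(\mathcal A)$, i.e.\ is induced by a genuine element $\pi \in L$; this is exactly the situation in which \Cref{S-T_formula} is phrased.

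Next, I would apply the Shimura--Taniyama formula \Cref{S-T_formula} to $\mathcal A$. It provides $\pi \in L$ inducing the Frobenius of the special fibre and satisfying
\[
\frac{v(\pi)}{v(q)} = \frac{\#\Phi_v}{\#H_v}, \qquad
\frac{\overline v(\pi)}{\overline v(q)} = \frac{\#\Phi_{\overline v}}{\#H_{\overline v}},
\]
where $q$ is the cardinality of the residue field of $K$ and $v, \overline v$ are the two places of $L$ above $p$ from the preceding lemma, with $H_v, H_{\overline v}$ the associated sets of embeddings into $\Qpbar$.

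Finally, I would insert the numerics. The preceding lemma shows $L_v \cong L_{\overline v} \cong R_p$, the unramified extension of $\Qp$ of degree $g$, so $\#H_v = [L_v : \Qp] = g = \#H_{\overline v}$; and \Cref{CMtype_construction} gives $\#\Phi_v = 1$, $\#\Phi_{\overline v} = g-1$. With the normalization $v(q) = \overline v(q) = 1$ this yields $v(\pi) = 1/g$ and $\overline v(\pi) = (g-1)/g = 1 - 1/g$, as claimed. The only genuine point of care — everything else being a direct citation — is to arrange the residue field of $K$ so that the Frobenius is literally a single element of $L$ and to match the valuation normalizations; once that is done the computation is immediate.
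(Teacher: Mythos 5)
Your proposal is correct and takes essentially the same route as the paper: existence of $(K,\mathcal A)$ from \Cref{thm_bourbaki}, then the Shimura--Taniyama formula \Cref{S-T_formula} to produce $\pi$ and compute its valuations from the combinatorics of $\Phi$. The only minor superfluity is the step of enlarging $K$: the statement of \Cref{S-T_formula} already asserts the existence of $\pi\in L$ inducing the Frobenius, so no extension is needed.
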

  \begin{proof}
  	The existence of $K$ and $\mathcal A$ follows from Honda--Tate theory (see \Cref{thm_bourbaki}). The existence and properties of $\pi$ follows from \Cref{S-T_formula} applied to $\mathcal A$ and $\Phi$.
  \end{proof}
  
  We will now turn to the study of the abelian scheme $\mathcal A$ satisfying  \Cref{abelianscheme_construction}. 
  
  \begin{lemma} \label{end_computation}
    For any $\mathcal A$ satisfying \Cref{abelianscheme_construction}, the Frobenius $\pi\in L$ is such that $\bQ(\pi) = L$.
  \end{lemma}
  \begin{proof}
    Let $\tilde R$ be a Galois closure of $R$. We have that $Q$ and $\tilde
    R$ are algebraically disjoint, and $\tilde L:=Q \cdot \tilde{R}$ is a Galois
    closure of $L$. Hence we get an isomorphism
    \begin{equation}
    	G := \Gal(\tilde L/\bQ) \cong \Gal( Q/ \bQ) \times \Gal(\tilde R/\bQ).
	\end{equation}	
    We have that $\Gal( Q/ \bQ)$ is isomorphic to $\mu_2$. Let $\{\mu_1, \dots, \mu_g\} = \Hom(R, \tilde R)$, it follows from the hypothesis of \Cref{main_result} that the action of $\Gal(\tilde R/\bQ)$ on $\Hom(R, \tilde R)$ gives an identification $\Gal(\tilde R/\bQ) \cong S_g$ such that for $\sigma \in S_g$ we have $\sigma(\mu_i) = \mu_{\sigma(i)}$. It follows that we have a computation of the Galois group
    \begin{equation} \label{gal_computation_1}
    	G = \mu_2 \times S_g.
    \end{equation}
   	With this isomorphism, we have that $(\varepsilon, \sigma)$ acts on $\Hom(L, \tilde L) = \Hom(Q, Q) \times \Hom(R, \tilde R)$ as 
   	\[(\varepsilon, \sigma)\cdot(\varepsilon', \mu_i) = (\varepsilon \varepsilon', \mu_{\sigma_i}).\]
    We know that $\pi \not \in Q$ nor in $R$, because
    for all $x\in Q$ or $R$, $v(x)
    = \overline v(x)$ and this is \textit{not the case} for $\pi$ by \eqref{Shimura-Taniyama}. \\
    We have that  $\Fix_G(\{\pi\})$ is a subgroup of $\Gal(\tilde L/\bQ)$ that
    contains $\Fix_G(L)$. Through the identification \eqref{gal_computation_1}, $\Fix_G( L)$ corresponds to $\{0\}\times S_{g-1}$ (we see $S_{g-1}$ as a subgroup of $S_g$ through its action on $\{2, \dots, g\}$). Indeed $(\varepsilon, \sigma)(1, \mu_1) = (1, \mu_1)$ if and only if $\varepsilon = 1$ and $\mu_{\sigma(1)} = \mu_1$ . \\
    Hence $\Fix_G(\{\pi\})$ is a subgroup of $\bZ / 2 \bZ \times S_g$ that contains $\{0\}\times S_{g-1}$ and that does not contain $\bZ/2\bZ$ nor $S_g$. By a group theoretic computation we obtain that $H = \{0\} \times S_{g-1}$, hence that $L = \bQ(\pi).$
  \end{proof}

  \begin{definition} \label{weil_numbers_convention}
    Let $\pi$ be a Weil number satisfying \Cref{abelianscheme_construction}, and $\pi_1, \dots, \pi_g$ be the $Q$-conjugates of $\pi$ in some Galois
    closure $\tilde L \subset \bC $ of $L$. Then, we set $\pi_{i+g} = \overline{\pi_i}$ for $1\leqslant i \leqslant g$. \\
  Then the set $\{\pi_1, \dots, \pi_{2g}  \}$ is the \textit{conjugacy class} of $\pi$ over $\bQ$. Hence, once we identify $\{1, \dots, 2g\}$ with $\{\pi_1, \dots, \pi_{2g}\}$, as in \Cref{motive_decomposition}, we get that the complex conjugation acts on $\{1, \dots, 2g\}$ as $i \mapsto i+g \mod{2g}$.
  \end{definition}
  
  \begin{proposition} \label{gal_computation_2}
  	For any $\mathcal A$ satisfying \Cref{abelianscheme_construction}, there is a unique isomorphism of groups $\mu_2 \times S_g \cong \Gal(\tilde L/\bQ)$ such that for all $1 \leqslant i \leqslant g$, the image of $(\varepsilon, \sigma)$ in $\Gal(\tilde L/\bQ)$ acts on $\pi_i$ by:
	  \begin{align}
	  	(\varepsilon, \sigma) \cdot \pi_i = 
	  	\begin{cases} 
	  		\pi_{\sigma(i)} \text{ if }\varepsilon = 1, \\
	  		\overline{\pi_{\sigma(i)}} \text{ if }\varepsilon = -1.	
	  	\end{cases} \label{gal_action}
	  \end{align}
  \end{proposition}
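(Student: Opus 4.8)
The plan is to exhibit the required isomorphism as a composite of natural maps, compute its action on the $\pi_i$, and then observe that \eqref{gal_action} determines it uniquely. First I would record the structural inputs. As $\tilde L = Q\cdot\tilde R$ with $Q$ imaginary quadratic and $\tilde R$ totally real, $\tilde L$ is a CM-field, so complex conjugation $\tau_L$ lies in the centre of $G := \Gal(\tilde L/\bQ)$; under the restriction isomorphism $G\xrightarrow{\sim}\Gal(Q/\bQ)\times\Gal(\tilde R/\bQ)$ (available because $Q$ and $\tilde R$ are linearly disjoint, see the proof of \Cref{end_computation}) it corresponds to $(\tau_Q,\mathrm{id})$. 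Moreover $\Gal(\tilde R/\bQ)$ has order $g!$ by condition~(2) of \Cref{main_result}, and it acts faithfully on the set $\{x_1,\dots,x_g\}$ of $\bQ$-conjugates of the primitive element $x := \pi+\overline\pi$ of $R$ (these $x_i = \pi_i+\overline{\pi_i}$ are pairwise distinct, being the conjugacy class of $x$, cf.\ \Cref{weil_numbers_convention}); hence $\sigma\mapsto(x_i\mapsto x_{\sigma(i)})$ identifies $\Gal(\tilde R/\bQ)$ with $S_g$. Composing these two identifications produces an isomorphism $\phi\colon\mu_2\times S_g\xrightarrow{\sim}G$ characterised by $\phi(-1,e)=\tau_L$ and by $\phi(1,\sigma)$ being the element of $\Gal(\tilde L/Q)\cong\Gal(\tilde R/\bQ)$ that sends $x_i$ to $x_{\sigma(i)}$; in particular $\phi(1,S_g)=\Gal(\tilde L/Q)$.

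Next I would verify \eqref{gal_action}. Fix $\sigma\in S_g$ and set $g_\sigma:=\phi(1,\sigma)\in\Gal(\tilde L/Q)$. Since $g_\sigma$ fixes $Q$ it permutes the $Q$-conjugates $\{\pi_1,\dots,\pi_g\}$ of $\pi$, say $g_\sigma(\pi_i)=\pi_{\rho(i)}$ for some $\rho\in S_g$. Applying $g_\sigma$ to $x_i=\pi_i+\overline{\pi_i}$ and using that $g_\sigma$ commutes with $\tau_L$ (centrality) gives
\[
x_{\sigma(i)}=g_\sigma(x_i)=g_\sigma(\pi_i)+\overline{g_\sigma(\pi_i)}=\pi_{\rho(i)}+\overline{\pi_{\rho(i)}}=x_{\rho(i)},
\]
and since $x_1,\dots,x_g$ are pairwise distinct this forces $\rho=\sigma$, i.e.\ $\phi(1,\sigma)\cdot\pi_i=\pi_{\sigma(i)}$. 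For $\varepsilon=-1$ we have $\phi(-1,\sigma)=\tau_L\cdot\phi(1,\sigma)$, whence $\phi(-1,\sigma)\cdot\pi_i=\tau_L(\pi_{\sigma(i)})=\overline{\pi_{\sigma(i)}}$. This is exactly \eqref{gal_action}.

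For uniqueness, by \Cref{weil_numbers_convention} the elements $\pi_1,\dots,\pi_{2g}$ are precisely the $\bQ$-conjugates of $\pi$, so $\tilde L$, a Galois closure of $\bQ(\pi)=L$, is generated over $\bQ$ by the $\pi_i$. Hence any element of $G$ is determined by the permutation it induces on $\{\pi_1,\dots,\pi_{2g}\}$; if $\phi'\colon\mu_2\times S_g\to G$ is another isomorphism satisfying \eqref{gal_action}, then $\phi'(\varepsilon,\sigma)$ and $\phi(\varepsilon,\sigma)$ induce the same permutation on the $\pi_i$ and therefore coincide, so $\phi'=\phi$.

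The step I expect to be the most delicate is the implication $x_{\sigma(i)}=x_{\rho(i)}\Rightarrow\rho=\sigma$: it relies on the labelling of the conjugates $x_i$ of $\pi+\overline\pi$ being precisely the one induced by the labelling of the $Q$-conjugates $\pi_i$, which in turn is the labelling used to identify $\Gal(\tilde R/\bQ)$ with $S_g$. The remaining points — centrality of $\tau_L$ in $G$, the splitting $G=\mu_2\times S_g$, and the equality $\phi(1,S_g)=\Gal(\tilde L/Q)$ — are routine consequences of the CM structure and of the linear disjointness of $Q$ and $\tilde R$ established in \Cref{end_computation}.
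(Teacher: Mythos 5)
Your proposal is correct and follows essentially the same route as the paper: the splitting $\Gal(\tilde L/\bQ)\cong\Gal(Q/\bQ)\times\Gal(\tilde R/\bQ)$ from linear disjointness, the identification $\Gal(\tilde R/\bQ)\cong S_g$ through the action on the $x_i=\pi_i+\overline{\pi_i}$, and uniqueness via $\tilde L=\bQ(\pi_1,\dots,\pi_g)$. The only difference is that you spell out the verification (that $\phi(1,\sigma)$ really sends $\pi_i$ to $\pi_{\sigma(i)}$, using centrality of $\tau_L$ and distinctness of the $x_i$), which the paper leaves implicit with ``satisfying the required property.''
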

  \begin{proof}
  	The uniqueness comes from the fact that $\tilde L = \bQ(\pi_1, \dots, \pi_g)$, so that the image of a pair $(\varepsilon, \sigma)$ is determined by its action on the generators. 
  	
  	Let $x = \pi + \overline \pi$, which is a primitive element of $R$ and $x_i=\pi_i + \overline \pi_i$. Then we can order  $\Hom(R, \tilde R) = \{\mu_1, \dots, \mu_g\}$ so that
        \[ \mu_i(x) = x_i.\]
        Then the action of $\Gal(\tilde R/\bQ)$ on $\Hom(R, \tilde R)$ defines an isomorphism $\Gal(\tilde R/ \bQ) \cong S_g$.
        By linear disjointness of $Q$ and $\tilde R$, we have an isomorphism 
  \begin{equation*}
  	\Gal(\tilde L/\bQ) \cong \mu_2\times S_g
  \end{equation*}
  satisfying the required property.
  \end{proof}
  
  \begin{proposition} \label{relation}
  	There is a choice of $\mathcal A$ in \Cref{abelianscheme_construction} such that the elements of the conjugacy class of the Frobenius $\pi \in L$ satisfy the relation:
  	\[ \pi_1 \dots \pi_g = q^{g/2}. \] 
  \end{proposition}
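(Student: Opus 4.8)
The plan is to evaluate $P := \pi_1\cdots\pi_g$ explicitly, to show that it equals $q^{g/2}$ up to a root of unity of $Q$, and then to dispose of that root of unity by an unramified base change.

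First I would identify $P$. By the convention of \Cref{weil_numbers_convention} the elements $\pi_1,\dots,\pi_g$ are exactly the $Q$-conjugates of $\pi$, and there are $[L:Q]=g$ of them because $\bQ(\pi)=L$ by \Cref{end_computation}; hence $P=\pi_1\cdots\pi_g$ is the relative norm $N_{L/Q}(\pi)$. In particular $P$ is an algebraic integer of $Q$ (equivalently, $P$ is invariant under $\Gal(\tilde L/Q)$ in view of \Cref{gal_computation_2}).

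Next I would pin $P$ down. Since $\overline P=\overline{\pi_1}\cdots\overline{\pi_g}$ and each conjugate $\pi_i$ is itself a Weil $q$-number, so that $\pi_i\overline{\pi_i}=q$, one has $P\overline P=\prod_{i=1}^g\pi_i\overline{\pi_i}=q^g$. Writing $q=p^a$, this says $N_{Q/\bQ}(P)=p^{ag}$. As $p$ is inert in $Q$, the prime $(p)$ is the only prime of $\mathcal O_Q$ above $p$ and is fixed by complex conjugation, so from $(P)=(p)^m$ together with $(p)^{2m}=(P)\overline{(P)}=(p)^{ag}$ we get $m=ag/2$, which is an integer because $g$ is even. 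Hence $(P)=(q^{g/2})$, so $P=\zeta\,q^{g/2}$ for a unit $\zeta\in\mathcal O_Q^{\times}$; as $Q$ is imaginary quadratic, $\mathcal O_Q^{\times}$ is finite and $\zeta$ is a root of unity.

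Finally I would remove $\zeta$. Let $N$ be the order of $\zeta$ and replace $\mathcal A$ by its base change to the ring of integers of the unramified degree-$N$ extension of $K$; this alters neither $\mathcal A\times_{\mathcal O_K}\overline{\bF}_p$ nor the CM-type, so the new abelian scheme is again one furnished by \Cref{abelianscheme_construction}, with special-fibre Frobenius $\pi^N$, a Weil $q^N$-number. Thus \Cref{end_computation} still gives $\bQ(\pi^N)=L$, and in the notation of \Cref{weil_numbers_convention} the $Q$-conjugates of $\pi^N$ are $\pi_1^N,\dots,\pi_g^N$, whose product is $P^N=\zeta^Nq^{Ng/2}=(q^N)^{g/2}$. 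Renaming $\pi^N$ and $q^N$ as $\pi$ and $q$ gives the asserted relation. I expect the only genuinely delicate point to be this last step: the norm $N_{L/Q}(\pi)$ is intrinsically defined only up to a root of unity of $Q$, and one must check that this indeterminacy is harmless, which it is precisely because it disappears after enlarging the residue field.
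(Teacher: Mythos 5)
Your proposal is correct and follows the same overall plan as the paper: identify $\pi_1\cdots\pi_g$ as $N_{L/Q}(\pi)\in Q$, show that this norm equals $q^{g/2}$ up to a root of unity in $Q$, and pass to an unramified base change to kill the root of unity. The only substantive difference is in the middle step. The paper invokes its Weil--number minimality result (\Cref{easy_case}, resting on \Cref{weil_numbers_minimality}) to conclude that some power of $N_{L/Q}(\pi)$ lies in $\bQ$, and then uses that a real Weil number is $\pm q^{\bullet}$. You instead give a self-contained ideal-theoretic computation: divisibility of $(N_{L/Q}(\pi))$ by only $(p)$, inert-ness of $p$ in $Q$, and even-ness of $g$ force $(N_{L/Q}(\pi))=(q^{g/2})$, and finiteness of $\mathcal O_Q^\times$ (as $Q$ is imaginary quadratic) pins down the remaining ambiguity to a root of unity. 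Both routes are valid; yours is a touch more elementary at this point, while the paper's re-uses machinery it has already built and also records, for later use, that the isomorphism of \Cref{gal_computation_2} is unaffected by the base change --- a point worth adding, since the subsequent lemmas use that isomorphism with the replaced $\pi$. One small wording quibble: $N_{L/Q}(\pi)$ is not itself defined only up to a root of unity --- rather, the \emph{choice} of $\pi$ in \Cref{abelianscheme_construction} is, and that is the indeterminacy your base change resolves.
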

  \begin{proof}
	  Let $K', \mathcal A'$ and $\pi'$ be any choice satisfying \Cref{abelianscheme_construction}. The conjugacy class of $\pi'$ over $Q$ is $\{\pi'_1, \dots, \pi'_g\}$, hence 
	  \[\pi_1' \dots \pi'_g = N_{L/Q}(\pi').\]
	  In particular $N_{L/Q}(\pi') \in Q$ is a Weil $q'^g$-number and as $p$ is irreducible in $Q$, \Cref{easy_case} tells us that there is $w > 0$ such that $N_{L/Q}(\pi')^{w} \in \bQ$. By definition of a Weil number we have
	  \[N_{L/Q}(\pi')^{w} \cdot \overline{N_{L/Q}(\pi')^{w}} = {q'}^{wg},\]
	  so that $N_{L/Q}(\pi')^{2w} = {q'}^{wg}$ because every term in the last equality is a real number. If we set $k=2w$, we can choose $K$ an unramified extension of $K'$ of degree $k$ and set $\mathcal A = \mathcal A' \times_{\mathcal O_{K'}} \mathcal O_K$. Then $\pi={\pi'}^k \in L$ will be the Frobenius endomorphism of the special fiber of $\mathcal A$, and we get the desired equality:
	  \[ \pi_1 \dots \pi_g = q^{g/2}. \] 
	  Let us say moreover that the isomorphism $\Gal(\tilde L/\bQ) \cong \mu_2 \times S_g$ of \Cref{gal_computation_2} is the same for $\pi$ and for $\pi'$, by uniqueness of the isomorphism.
  \end{proof}
  \begin{remark}
  According to \Cref{rmk-rank-eigvalues}, a relation as the one in \Cref{relation} expresses the existence of \textit{Tate classes} in a linear subspace of the $\ell$-adic cohomology $H^g(A,\Ql)$ of the special fibre $A$ of $\mathcal A$.
  \end{remark}
  \begin{definition} \label{def_I0}
	  Let $\mathcal A$ be an abelian scheme satisfying \Cref{relation}, $A$ be its special fiber and $\mathcal B$ be any CM-lift of the special fiber $A$ over the ring of integers $\mathcal O_{K'}$ of a $p$-adic field $K'$. Then $\mathcal B$ has CM by $L$ and we can set 
	  \[I_0 = \{1, \dots, g\}, \]
	  and according to the result of \Cref{motive_decomposition}, we get a direct factor $M_{\langle I_0 \rangle}$ of $H^g(\mathcal B)$ of rank $2$, whose realizations are full of Tate classes.
  \end{definition}
  \begin{remark} \label{determinant_h1}
  	There is another description of the motive $M_{\langle I_0 \rangle}$. We defined it using the relation of \Cref{relation} coming from the norm $N_{L/K}$, which is merely a $Q$-linear \textit{determinant}. We can do it more intrinsically by considering the motive of $Q$-dimension $g$, $\mathfrak h^1(\mathcal B) \in \CHM(\mathcal O_{K'})_Q$ (we look at it with coefficient in $Q$, using the action of the complex multiplication). Then we can consider its \textit{determinant} $\bigwedge^g_Q \mathfrak h^1(\mathcal B)$ which is of weight $g$ and $Q$-dimension $1$. Then by forgetting the $Q$-linear structure on $\bigwedge^g_Q \mathfrak h^1(\mathcal B)$, we get a motive of weight $g$ and rank $2$ in $\CHM(\mathcal O_{K'})_{\bQ}$. Moreover the composite map 
  	$$M_{\langle I_0 \rangle} \hookrightarrow \mathfrak h^g(\mathcal B) \cong {\bigwedge}^g_{\bQ} \mathfrak h^1(\mathcal B) \twoheadrightarrow {\bigwedge}^g_{Q} \mathfrak h^1(\mathcal B)$$
  	 is an isomorphism.
  \end{remark}
  We fix an isomorphism $\bC \cong \Qpbar$, so that we can consider Hodge realizations of motives in $\CHM(\mathcal O_{K})$, where $\mathcal O_K$ is the ring of integers of a $p$-adic field $K$ endowed with a $\Qp$-algebra map $K \to \Qpbar$.
  \begin{proposition} \label{hodge_type_computation}
  	 Let $\mathcal A$ be an abelian scheme over $\mathcal O_K$ satisfying \Cref{relation}, and $A$ its special fibre. For any CM-lifting $\mathcal B$ of $A$ over the ring of integers $\mathcal O_{K'}$ of a $p$-adic field  $K'$, the Hodge realization of $M_{\langle I_0 \rangle }$ given by \Cref{def_I0} is of type 
  	\[(g/2+1,g/2-1), (g/2-1, g/2+1),\]
  	hence it does not contain Hodge classes. In particular, the generic fibre of the motive $M_{\langle I_0 \rangle}$ does not contain any algebraic class.
  \end{proposition}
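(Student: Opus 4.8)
The plan is to express the Hodge realization of $M_{\langle I_0 \rangle}$ in terms of the CM-type of $\mathcal B$ and then observe that it has no $(g/2,g/2)$-part. First I would fix the identification $\Hom(L,\bC)\cong\{1,\dots,2g\}$ of \Cref{weil_numbers_convention}, under which complex conjugation acts as $i\mapsto i+g\bmod 2g$, the set $I_0=\{1,\dots,g\}$ consists of the $Q$-conjugates of the Frobenius, and the $S_g$-factor of $G=\mu_2\times S_g$ permutes $I_0$ and $\overline{I_0}=\{g+1,\dots,2g\}$ in parallel by the standard permutation action. Using \Cref{motive_decomposition} together with the description \eqref{gal_action} of the $G$-action, the $G$-orbit of $I_0$ is $\{I_0,\overline{I_0}\}$, so $(M_{\langle I_0\rangle})_{\tilde L}=L_{I_0}\oplus L_{\overline{I_0}}$ with $L_{I_0}=\bigotimes_{i\in I_0}L_i$ and $L_{\overline{I_0}}=\overline{L_{I_0}}$. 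Writing $\Psi\subset\{1,\dots,2g\}$ for the CM-type of $\mathcal B$, \Cref{hodge_type-CM_type} tells us that $L_i\subset\frk h^1(\mathcal B)_{\tilde L}$ is of Hodge type $(1,0)$ if $i\in\Psi$ and of type $(0,1)$ otherwise; hence $L_{I_0}$ is pure of bidegree $(a,g-a)$ and $L_{\overline{I_0}}$ of bidegree $(g-a,a)$, where $a=\#(\Psi\cap I_0)$. Thus the proposition reduces to the claim that $a\in\{g/2-1,\;g/2+1\}$.

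To pin down $a$ I would combine two facts about $\Psi$. The first is a Shimura--Taniyama count: the special fibre of $\mathcal B$ becomes isomorphic to $A$ after a finite base change, so its Frobenius and $\pi$ have a common power in $L$, and in particular the same normalized $p$-adic valuation at each of the two places over $p$, namely $1/g$ at $v=\frkp$ and $(g-1)/g$ at $\overline v=\overline{\frkp}$, as in \eqref{Shimura-Taniyama}. Applying \Cref{S-T_formula} to $\mathcal B$ therefore gives $\#(\Psi\cap H_v)=1$ and $\#(\Psi\cap H_{\overline v})=g-1$. The second is a transversality statement: $\#(I_0\cap H_v)=\#(I_0\cap H_{\overline v})=g/2$. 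Indeed $I_0$ is exactly the set of embeddings $L\hookrightarrow\Qpbar\cong\bC$ whose restriction to $Q$ is a fixed embedding $\iota_Q$; since $p$ is inert in $Q$, $\iota_Q$ factors through a unique embedding $Q_p\hookrightarrow\Qpbar$, and an element of $I_0\cap H_v$ is the same datum as an embedding $L_{\frkp}\hookrightarrow\Qpbar$ restricting to that one on $Q_p\subset L_{\frkp}$. As $L_{\frkp}\cong R_p$ is unramified over $Q_p$ of degree $g/2$, there are exactly $g/2$ of these, and the identical count applies to $I_0\cap H_{\overline v}$.

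Finally I would run the bookkeeping on the $g$ complex-conjugation pairs $\{i,\,i+g\}$. Complex conjugation swaps both $I_0\leftrightarrow\overline{I_0}$ and $H_v\leftrightarrow H_{\overline v}$, so each pair meets each of $I_0,\overline{I_0},H_v,H_{\overline v}$ in exactly one point; call a pair of \emph{type A} if its point of $I_0$ lies in $H_v$ and of \emph{type B} otherwise, so the transversality statement gives $g/2$ pairs of each type. The CM-type $\Psi$ selects one point from each pair, and $\#(\Psi\cap H_v)=1$ forces exactly one pair $P$ to have its $H_v$-point selected, all others having their $H_{\overline v}$-point selected. A pair other than $P$ contributes to $a=\#(\Psi\cap I_0)$ exactly when it is of type B (its $H_{\overline v}$-point then lies in $I_0$), while $P$ contributes exactly when it is of type A; hence $a=g/2+1$ if $P$ is of type A, and $a=g/2-1$ if $P$ is of type B. Since $g\geqslant 4$, both bidegrees $(g/2+1,g/2-1)$ and $(g/2-1,g/2+1)$ differ from $(g/2,g/2)$, so the Hodge structure underlying $M_{\langle I_0\rangle}$ carries no Hodge class; and because $\mathcal B$ is an abelian variety over a field of characteristic zero (so that numerical and homological equivalence agree on it), an algebraic cycle on its generic fibre lying in $M_{\langle I_0\rangle}$ would produce a Hodge class of bidegree $(g/2,g/2)$ in its Betti realization, of which there is none. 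The one genuinely substantive ingredient is the transversality of the two partitions $\{I_0,\overline{I_0}\}$ and $\{H_v,H_{\overline v}\}$ of $\{1,\dots,2g\}$; everything else is routine manipulation of the Shimura--Taniyama formula.
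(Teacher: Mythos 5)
Your proof is correct and follows essentially the same route as the paper: applying the Shimura--Taniyama formula to $\mathcal B$ to get $\#(\Psi\cap H_v)=1$ and $\#(\Psi\cap H_{\overline v})=g-1$, combining this with the count $\#(I_0\cap H_v)=\#(I_0\cap H_{\overline v})=g/2$, and reading off the Hodge bidegree from \Cref{hodge_type-CM_type}. The only cosmetic difference is that you obtain the $g/2$-count directly from $[L_{\frk p}:Q_p]=g/2$, whereas the paper reads it off from the valuation constraint imposed by the relation $\pi_1\cdots\pi_g=q^{g/2}$; and you display both cases $(g/2\pm 1, g/2\mp 1)$ rather than reducing to one by an implicit swap of $I_0$ and $\overline{I_0}$.
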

  \begin{proof}
  The isomorphism $\bC \cong \Qpbar$ endows $\bC$ with a discrete valuation $v$, we can normalize it so that $v(q) = 1$.
	  Let $\Phi'$ be the CM-type of $\mathcal B$, and choose an isometric embedding $K' \to \bC$. From the Shimura-Taniyama formula
	  \eqref{Shimura-Taniyama} we deduce: 
	  $$\text{there exists exactly one complex embedding }\sigma \in \Phi'\text{ of }L\text{ in }\bC \text{ such that }v(\sigma(\pi)) = \frac{1}{g}.$$
	  By \Cref{hodge_type-CM_type}, as $\pi$ is a primitive element for $L$, the condition for an embedding $\sigma: L \rightarrow \bC$ to be in $\Phi$ is equivalent to the condition that the eigenspace of $\pi$ acting on $H^1(\mathcal B_{\bC}, \bC)$ for the eigenvalue $\sigma(\pi)$ is of type $(1,0)$ for the Hodge decomposition. \\
	  Up to renumbering the $\pi_i$'s, we can assume $\sigma(\pi) = \pi_1$. Let $L_{\pi_i}$ be the eigenline associated to the eigenvalue $\pi_i$ for the action of $\pi$ on $H^1(\mathcal B_{\bC}, \bC)$.\\
	  The only way that the relation of \Cref{relation} is satisfied is that (after possibly renaming the $\pi_i$'s)
	  $$v(\pi_1) = \dots = v(\pi_{g/2}) = \frac{1}{g}$$ and
	  $$v(\pi_{g/2+1}) = \dots = v(\pi_g) = 1 - \frac{1}{g}.$$
	  Hence we have that the eigenvalues $\pi_2, \dots, \pi_{g/2}$ give rise to lines in
	  $H^{0,1}$ and that the eigenvalues $\pi_{g/2+2}, \dots, \pi_g$ give rise to lines in
	  $H^{1,0}$. The tensor product of the lines $L_{\pi_i}$ for $1 \leqslant i \leqslant g$ will be a line $L$ in
	  $H^{g/2+1, g/2-1}$, and by its definition in \Cref{motive_decomposition}, the realisation of $M_{\langle I_0 \rangle}$ is the direct sum of $L$ and $\overline L$.
  \end{proof}
  \begin{remark}
  	In \Cref{hodge_type_computation}, we prove that Tate classes in $M_{\langle I_0 \rangle}$  cannot be lifted to algebraic classes on the generic fibre of $\mathcal A$ because they cannot be Hodge classes. It is also possible to prove that those Tate classes cannot be lifted to algebraic classes without referring to Hodge theory: the abelian scheme $\mathcal A$ over $\mathcal O_K$ is in fact the localization of an abelian scheme $\widetilde {\mathcal A}$ over $\mathcal O_{\widetilde K}[S^{-1}]$ where $O_{\widetilde K}$ is the ring of integers of a number field $\widetilde K$ that we can assume Galois over $\bQ$, and $S$ is a finite set of prime numbers. The complex multiplication can also be defined for $\widetilde {\mathcal A}$, and so the motive $M_{\langle I_0 \rangle}$ can be seen as a relative motive over $\mathcal O_{\widetilde K}[S^{-1}]$. 
  	
  	The specialization of $M_{\langle I_0 \rangle}$ at some place of $\widetilde K$ above $p$ is full of Tate classes, but we can prove that for infinitely many prime numbers $\tilde p$, the specializations of $M_{\langle I_0 \rangle}$  to places $\frk p$ above $\tilde p$ does not contain Tate classes, and this will prove that $M_{\langle I_0 \rangle}$ does not bear an algebraic class in characteristic zero. Indeed, by Chebotarev, there exists infinitely many primes $\tilde p$ such that $\tilde p$ splits completely in $\widetilde K/ \bQ$ and we can choose $\ell$ such that $\ell$ splits completely in $L$. Then, we can apply Shimura-Taniyama formula \ref{S-T_formula}, to $\widetilde{\mathcal A}$ at a place $\frk p$ above $\tilde p$ in $K$ and we get that there is $\tilde \pi \in L$ inducing the Frobenius on the special fibre of $\widetilde {\mathcal A}$ at $\frk p$ satisfying a property close to what happens in the proof of \Cref{hodge_type_computation}, namely:
  	\begin{align*}
  		0 &= v(\tilde \pi_1) = v(\tilde \pi_{g/2+2}) = \dots = v(\tilde \pi_{g}) \\
  		1 &= v(\tilde \pi_{g/2+1}) = v(\tilde \pi_{2}) = \dots = v(\tilde \pi_{g/2}).
  	\end{align*}
  	Here $\{\tilde \pi_1, \dots, \tilde \pi_g \}$ denotes the set of $Q$-conjugates of $\tilde \pi$ in $\tilde L$ and $v$ is a $\tilde p$-adic valuation on $\tilde L$ where $\tilde L$ is a Galois closure of $L$. Denote by $\widetilde q$ the size of the residue field of $\widetilde K$ at $\frk p$, $v$ is normalized so that $v(\tilde q) = 1$. The eigenvalues of the Frobenius at $\frk p$ acting on $R_{\ell} M_{\langle I_0 \rangle}$ are conjugated to $\tilde \pi_1 \times \dots \times \tilde \pi_g$, and this number cannot be $\widetilde q^{g/2}$ because $v(\tilde \pi_1 \dots \tilde \pi_g) = g/2-1$ and $v(\widetilde q^{g/2}) = g/2$.
  \end{remark}
  	We will now show that the nonzero Tate classes in realizations of $M_{\langle I_0 \rangle}$ are \textit{exotic Tate classes}, and that all exotic classes in realizations of $\frk h(\mathcal A)$ come from the realizations of $M_{\langle I_0 \rangle}$.

\begin{lemma} \label{lemma_relations}
  Let $\mathcal A$ be an abelian scheme satisfying \Cref{relation} with special fibre $A_0/\bF_q$ and $E_0/\bF_q$ be a supersingular elliptic curve. Then the only exotic submotives of
  \[\frk h^*(A_0 \times E_0) \cong \frk h^*(A_0) \otimes \frk h^*(E_0)\]
  are $M_{\langle I_0 \rangle} \otimes \frk h^0(E_0)$ and $M_{\langle I_0 \rangle} \otimes \frk h^2(E_0)$. Moreover, the only exotic submotive of $\frk h^*(A_0)$ is $M_{\langle I_0 \rangle}$.
\end{lemma}
\begin{proof}  
  We have $1 \in I_0$ and $1+g \not \in I_0$ hence by \Cref{exoset_exomot}, the motive $M_{\langle I_0 \rangle}$ does not contain nonzero Lefschetz classes, hence it is exotic. As $\frk h^0(E_0)$ and $\frk h^2(E_0)$ contain only Lefschetz classes  and are invertible, $M_{\langle I_0 \rangle} \otimes \frk h^0(E_0)$ and $M_{\langle I_0 \rangle} \otimes \frk h^2(E_0)$ are also exotic.
  	
  According to \Cref{motive_decomposition}, $\frk h^*(A_0)$ decomposes as a direct sum of motives $M_{\langle I \rangle}$. If such a summand contains only potential Tate classes then $k:=\# I$ is even and
  \begin{equation*}
    \prod_{i \in  I}\pi_i^e = q^{\frac{ek}{2}}
  \end{equation*}
  for some $e$. Then by \Cref{lemma_mult_relations}, we get that
  \begin{itemize}
  \item either $\langle I \rangle = \langle I_0 \rangle$ so that $M_{\langle I \rangle} = M_{\langle I_0 \rangle}$,
  \item or $I$ can be written as $I = \cup_{i \in I \cap [1, g]} \{i, i + g\}$, which implies that $M_{\langle I \rangle}$ is not exotic by \Cref{exoset_exomot}.
  \end{itemize}
  
  This proves the last sentence of the lemma. Now, we see using Kunneth formula that
  \[\frk h^*(A_0 \times E_0)\]
  decomposes as a direct sum of motives,
  \[M_{\langle I \rangle} \otimes \frk h^n(E_0)\]
  with $0 \leqslant n \leqslant 2$. If such a summand contains only potential Tate classes, then $\# I + n$ is even and we have
  \begin{equation*}
    \left (\prod_{i \in  I}\pi_i^e \right ) \times q^{\frac{en}{2}}= q^{\frac{e(k+n)}{2}}
  \end{equation*}
  for some $e$, where $k = \# I$. Hence
  \begin{equation*}
    \prod_{i \in  I}\pi_i^e = q^{\frac{ek}{2}},
  \end{equation*}
  so that we can apply \Cref{lemma_mult_relations}. We get that $k$ is even, so that $n = 0$ or $2$ and
  \begin{itemize}
  \item either $\langle I \rangle = \langle I_0 \rangle$ so that $M_{\langle I \rangle} = M_{\langle I_0 \rangle}$,
  \item or $I$ can be written as $I = \cup_{i \in I \cap [1, g]} \{i, i + g\}$, which implies that $M_{\langle I \rangle}$ is not exotic by \Cref{exoset_exomot}. As $\frk h^n(E_0)$ has rank $1$ and contains only Lefschetz classes, $M_{\langle I \rangle} \otimes \frk h^n(E_0)$ is not exotic either.
  \end{itemize} 
\end{proof}

\begin{lemma} \label{lemma_mult_relations}
  In the context of \Cref{lemma_relations}, let $I \subset \{1, \dots, 2g\}$ be of cardinal $k$, such that
  \begin{equation} \label{relation1}
    \left (\prod_{i \in  I}\pi_i^e \right ) = q^{\frac{ek}{2}}
  \end{equation}
  for some $e$. Then $k$ is even and either $\langle I \rangle  = \langle I_0 \rangle$ or $I$ can be written as
  \begin{equation*}
    I = \bigcup_{i \in I \cap [1, g]} \{i, i+g\}.
  \end{equation*}
\end{lemma}
\begin{proof}
  If there is $i$ in $I$ such that $i+g \in I$ (i.e. if we can
  factorise the relation by $\pi_i^e \pi_{i+g}^e = q$) then by removing $i$
  and $i+g$ from $I$, we get a smaller set $I'$ satisfying the assumptions of the lemma, and finding a partition for $I'$ amounts to finding one for $I$.

  Hence we can assume that there are no $i \in I$ such that $i+g \in I$. In this case we have
  \begin{align*}
    k = \# I &= \# \left  (\bigcup_{i=1}^{g} I \cap \{ i, i+g \} \right ) \\
             & \leqslant \sum_{i=1}^g \# I \cap \{i, i+g \} \\
             & \leqslant g.
  \end{align*}
  More precisely we have $k=0$ or $k=g$. We will prove this by contradiction, assume $0< k < g$. Then we can find $1 \leqslant j \leqslant g$ such that $j \not \in I$ and $j + g \not \in I$. By changing the representative of $\langle I \rangle$, we can choose $i_0 \in I\cap [1,g]$ (if not possible for $I$ itself, replace it by $\overline I$).  By assumption on $I$ this implies that $i_0 +g \not \in I$.
  
  Let us consider $\sigma$ the automorphism of $\tilde L$ corresponding to
  the element $(1, \transp{i_0}{j})$ through the isomorphism
  of \Cref{gal_computation_2}. Here $\transp{i_0}{j}$ denotes the transposition that swaps
  $i_0$ and $j$. By applying $\sigma$ to \eqref{relation1} we get
  \begin{equation*}
    \sigma \left (\prod_{i \in I} \pi_i^e \right ) = q^\frac{ke}{2},
  \end{equation*}
  so that \eqref{gal_action} gives us:
  \begin{equation}
     \prod_{i \in I\setminus{i_0}\cup\{j\}} \pi_i^e = q^{\frac{ke}{2}}. \label{relation2}
  \end{equation}
  Now, by taking the quotient of \eqref{relation1} by \eqref{relation2}
  we get $\pi_{i_0}^e \pi_j^{-e} = 1$, which is impossible since $\pi^e$ has
  $2g$ distinct conjugates. Hence $k= 0$ or $k = g$, in any case $k$ is even.
  
  If $k=0$, then we have the desired partition.
  
  Now assume $k=g$. By changing the representative of $\langle I \rangle$, we can assume $I \cap [1, g] \not = \emptyset$. The algebraic integers $\{\pi_i\}_{1 \leqslant i \leqslant 2g}$ satisfy the relations:
  \begin{align*}
    \prod_{i \in I} \pi_i^e &= q^{\frac{ge}{2}}, \\
    \prod_{i = 1}^{g} \pi_i^e &= q^{\frac{ge}{2}}.
  \end{align*}
  
  By taking the product of these relations, then by cancelling relations of the form $\pi_i^e \pi_{i+g}^e = q^e$, we get the following:
  \[\prod_{i \in I\cap [1,g]} \pi_i^{2e} = q^s \]
  for some $s$. If $\#I \cap [1, g]=g$, then $I = [1,g]$ and $\langle I \rangle= \langle I_0 \rangle$. Otherwise, we have
  \[0 < \#I \cap [1, g] < g, \]
  and we can apply the previous case with $e'=2e$ to get a contradiction.
\end{proof}

\begin{proof}[Proof of \Cref{main_result}]
	Let $Q$, $R$ be number fields as in \Cref{main_result}, and $L = Q \cdot R$. Denote by $\mathcal A/\mathcal O_K$ the abelian scheme provided by \Cref{abelianscheme_construction}, satisfying \Cref{relation}. It has CM by $L$, as well as its special fibre $A_0$ and its geometric special fibre $A$. We see that $A_0$ is geometrically simple with $\End^0(A) = L$ by applying \Cref{end_computation} to extensions of scalars of $\mathcal A$. 
	
	In view of \Cref{motive_decomposition}, we have a decomposition $\frk h^*(\mathcal A)$ into a direct sum of motives $M_{\langle I \rangle}$. Moreover, for any abelian scheme $\mathcal E/\mathcal O_K$ whose special fibre $E_0$ is a supersingular elliptic curve, we obtain a decomposition of $\frk h^*(\mathcal A \times \mathcal E)$ by tensoring the previous decomposition of $\frk h^*(\mathcal A)$ with $\frk h^*(\mathcal E)$.

        Denote by $E$ the geometric special fibre of $\mathcal E$. According to \Cref{SCHT_reduction}, in order to prove the standard conjecture of Hodge type for $A \times E$ and $A$, it suffices to prove the positivity on each summand of the decomposition separately. Let $M$ be a summand of $\frk h^*(\mathcal A \times \mathcal E)$ or $\frk h^*(\mathcal A)$, containing only potential Tate classes. According to \Cref{lemma_relations}, there are two possibility: 
	\begin{itemize}
		\item either $M$ contains only Lefschetz classes in its special fiber,
		\item or M is an exotic motive among $M_{\langle I_0 \rangle}, M_{\langle I_0 \rangle} \otimes \frk h^0(\mathcal E), M_{\langle I_0 \rangle} \otimes \frk h^2(\mathcal E) $, which are of rank $2$ by \Cref{rmk-rank-eigvalues}.
	\end{itemize}
        The standard conjecture of Hodge type holds for motives containing only Lefschetz classes in their special fibers by \Cref{SCHT_lefschetz}. Hence we can assume that $M$ has rank $2$.

        If algebraic cycles in the special fibre of $M$ are numericaly trivial, then the pairing is $0$ and positivity is automatic. Otherwise, \Cref{thm_ancona} applies to $M$ endowed with the intersection product. Indeed, $M$ has rank $2$, has $\dim_{\bQ}(V_Z) = 2$, and as it comes from an abelian scheme, Riemann's relations tell us that the intersection form induces a polarization on the Betti realization.
        
  Hence the the standard conjecture of Hodge type holds true for $A$ and $A \times E$. Moreover according to \Cref{hodge_type_computation}, there is an exotic Tate class in $A$ which is not liftable to characteristic $0$.
\end{proof}
\section{Abelian varieties of Weil--Tate type} \label{WT}
A crucial input for \Cref{main_result} is \Cref{thm_ancona} in which a condition of \textit{rank $2$} is needed. In this section, we will explain some concrete consequences of this condition. In particular, it will justify why we only have new examples for simple abelian varieties in \textit{even dimension}.
\begin{definition}[Mildly exotic abelian varieties] \label{meav}
	Let $A$ be a geometrically simple abelian variety over a finite field, of dimension $g$. We say $A$ is \textit{mildly exotic} if there exists $L$ a CM-field of degree $2g$ such that $A$ has CM by $L$ and
	\begin{enumerate}
	\item There are exotic classes in the cohomology of $A$ (i.e there is an exotic direct summand in the decomposition of \Cref{motive_decomposition}).
	\item Any direct summand of the form $M_{\langle I \rangle}$ of $\mathfrak h^{2i}(A)$ which is an exotic submotive of $\frk h (A)$ is of rank at most $2$.
	\end{enumerate}
	We will say that a simple abelian variety $A/ \Fpbar$ is mildy exotic if it has a model $A_0$ over a finite field $k_0$ which is mildly exotic in the previous sense.
\end{definition}
\begin{remark} \label{exotic-rank-one}
	An exotic submotive $M_{\langle I \rangle}$ of $\mathfrak h^{2i}(A)$ cannot be of rank $1$. Indeed by \Cref{rmk-rank-eigvalues}, its rank is the size of $\langle I \rangle$. This size can only be $1$ if $I = \emptyset$ or if $I = \{1, \dots, 2g\}$, and in these cases $M_{\langle I \rangle}$ isn't exotic by \Cref{exoset_exomot}.
	
	Hence in \Cref{meav}, it is equivalent to ask that $M_{\langle I \rangle}$ has rank $2$ or to ask that it has rank at most $2$.
\end{remark}
The proof of \Cref{thm_intro} given at the end of \Cref{main_proof} can be separated in two main parts. First, constructing simple abelian varieties which are mildly exotic, then proving the standard conjecture of Hodge type for mildly exotic abelian varieties. To prove \Cref{thm_intro} one must also prove that the exotic classes in consideration do not lift as Hodge classes in characteristic $0$, but we will not consider these questions in the following.
\begin{theorem}
	For each prime $p$ and even integer $g \geqslant 4$, there exists infinitely many mildly exotic abelian varieties of dimension $g$ over $\overline \bF_p$.
\end{theorem}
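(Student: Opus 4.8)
The plan is to observe that the simple abelian varieties built in the proof of \Cref{main_result} are themselves mildly exotic, so that the theorem reduces to the existence-and-infinitude statement already packaged in \Cref{fields_construction}. First I would fix an imaginary quadratic field $Q$ in which $p$ is inert (equivalently, irreducible); such a $Q$ always exists by an elementary congruence/quadratic-residue argument (for instance $Q = \bQ(\sqrt{-d})$ with $p \nmid d$ and $-d$ a non-square modulo $p$ when $p$ is odd, and $Q = \bQ(\sqrt{-3})$ when $p = 2$). Since $g \geqslant 4$ is even, \Cref{fields_construction} then provides infinitely many pairwise non-isomorphic totally real fields $R$ of degree $g$ over $\bQ$ satisfying the three hypotheses of \Cref{main_result}.

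Next, for each such $R$ I would carry out the construction of \Cref{main_proof} verbatim: set $L = Q \cdot R$, take the CM-type $\Phi$ of \Cref{CMtype_construction}, and let $\mathcal A$ over $\mathcal O_K$ be the abelian scheme of \Cref{abelianscheme_construction}, arranged via \Cref{relation} so that $\pi_1 \cdots \pi_g = q^{g/2}$, with special fibre $A$. By \Cref{end_computation} and Honda--Tate theory, $A$ is simple with $\End^0(A) = Q \cdot R$; in particular $A$ is geometrically simple, of dimension $g$, defined over the (finite) residue field of $\mathcal O_K$, and carries CM by the degree-$2g$ field $L$. By \Cref{lemma_relations} the unique exotic submotive of $\frk h(A)$ is $M_{\langle I_0 \rangle}$, whose rank equals $\#\langle I_0 \rangle = 2$ by \Cref{motive_decomposition} (see \Cref{def_I0}). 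Hence $A$ has exotic classes, which is condition (1) of \Cref{meav}, and its only exotic summand of the form $M_{\langle I \rangle}$ satisfies the rank bound of condition (2); so $A$ is mildly exotic --- and by \Cref{CSTH_me} it moreover satisfies the standard conjecture of Hodge type.

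Finally I would deduce infinitude: isogenous abelian varieties have isomorphic endomorphism algebras, and $\End^0(A) = Q \otimes_{\bQ} R$ recovers $R$ (as the fixed field of complex conjugation) up to isomorphism, so the $A$'s attached to non-isomorphic $R$'s are non-isogenous, a fortiori non-isomorphic; since \Cref{fields_construction} yields infinitely many such $R$, we are done, base-changing each $A$ from its finite field of definition to $\overline{\bF}_p$ if one wants the examples literally over $\overline{\bF}_p$. I do not expect a genuine obstacle here: the content is a repackaging of \Cref{main_result}, \Cref{lemma_relations}, and \Cref{fields_construction}, and the only points needing a sentence of justification are the elementary existence of a suitable $Q$ and reading ``infinitely many $R$'' in \Cref{fields_construction} as meaning up to field isomorphism.
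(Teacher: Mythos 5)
Your proposal is correct and takes essentially the same route as the paper: the paper's own proof is the two-line observation that \Cref{lemma_relations} together with the rank-$2$ computation for $M_{\langle I_0 \rangle}$ shows the varieties of \Cref{main_proof} are mildly exotic, and that \Cref{fields_construction} gives infinitely many of them. The only thing you add is the (correct and useful, though implicit in the paper) remark that endomorphism algebras separate the varieties attached to non-isomorphic totally real fields $R$.
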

\begin{proof}
	Let $g \geqslant 4$ be an even integer and $Q$ be a quadratic imaginary number field in which $p$ is irreducible. For a field $R$ satisfying \Cref{fields_construction}, we set $L = Q\cdot R$. Then we choose an abelian scheme  $\mathcal A/ \mathcal O_K$ of relative dimension $g$, satisfying \Cref{relation} where $\mathcal O_K$ is the ring of integers of a $p$-adic field $K$. Let $\pi \in L$ be the lift of the Frobenius endomorphism of the special fibre of $\mathcal A$ satisfying \Cref{S-T_formula}. By \Cref{weil_numbers_convention} and \Cref{gal_computation_2}, we can order the conjugates of $\pi$ in $\tilde L$, 
	\[\{\pi_1, \dots, \pi_{2g}\},\]
in such a way that we can compute the action of the Galois group $G = \Gal(\tilde L/\bQ)$.
	With such a context, \Cref{lemma_relations} tells us that there is a unique exotic submotive in the decomposition of $\frk h^*(\mathcal A)$ provided by \Cref{motive_decomposition}, and it is associated to the set $I_0 = \{1, \dots, g\}$. Moreover, $M_{\langle I_0 \rangle}$ has rank $\#\langle I_0 \rangle$ by \Cref{rmk-rank-eigvalues}. Finally, according to \Cref{gal_computation_2}, $I_0$ has only one conjugate which is 
	\[\{g+1, \dots, 2g\}.\] 
	Hence the special fibre $A$ of $\mathcal A$ is mildly exotic and also its extension $\overline A$ to $\Fpbar$. As $\overline A$ determines $R$, and because we have infinitely many choices for $R$, we produced infinitely many mildly exotic abelian varieties.
\end{proof}
\begin{theorem} \label{CSTH_me}
	If $A$ is a mildy exotic geometrically simple abelian variety, then $A$ satisfies the standard conjecture of Hodge type.
\end{theorem}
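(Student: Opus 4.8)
The plan is to reverse the argument of \Cref{main_result}: decompose $\frk h(A)$ via \Cref{motive_decomposition} and sort the summands $M_{\langle I\rangle}$ into three types, each handled by a result already available. After replacing $A$ by an isogenous variety and its field of definition by a finite extension large enough that every geometric Tate class of $A$ is defined over it, fix a model of $A$ over such a finite field together with its $L$-action. Since $A$ is geometrically simple and $L$ has degree $2g$, the Frobenius $\pi$ generates $L$; its $2g$ conjugates $\pi_1,\dots,\pi_{2g}$ are then distinct, and complex conjugation $c\in G=\Gal(\tilde L/\bQ)$ acts freely on $\{1,\dots,2g\}$. The key observation is that each orbit $\langle I\rangle$ is \emph{homogeneous}: because $q^{|I|/2}$ is $G$-fixed and $c$ is central in $G$, if one line $L_{I'}$ with $I'\in\langle I\rangle$ carries a Tate class then all of them do, and likewise if one line is of the divisor type $L_{J\cup c(J)}$ then they all are. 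Consequently, for each $\langle I\rangle$ exactly one of the following holds: (a) $M_{\langle I\rangle}$ contains no Tate class, hence --- by the Tate conjecture for abelian varieties over finite fields --- no algebraic class, so nothing has to be checked; (b) $M_{\langle I\rangle}$ is full of Tate classes and all of them are Lefschetz, and then the pairing is positive definite on it by Milne's \Cref{SCHT_lefschetz}; (c) $M_{\langle I\rangle}$ is full of Tate classes, none of which is Lefschetz, i.e.\ it is an exotic submotive.

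All the content is in case (c). The hypothesis that $A$ is mildly exotic bounds the rank of such a summand by $2$. Rank $1$ is impossible: a singleton orbit $\langle I\rangle=\{I\}$ forces $I$ to be $G$-stable, in particular stable under the central element $c$, hence a disjoint union of pairs $\{i,c(i)\}$; by the argument in the proof of \Cref{exoset_exomot} this makes $M_{\langle I\rangle}$ Lefschetz, contradicting exoticity. So every exotic summand has rank exactly $2$. By Honda--Tate theory (cf.\ \Cref{thm_bourbaki}) $A$ admits a CM-lifting $\mathcal A$ over the ring of integers $\mathcal O_K$ of a $p$-adic field, and $M_{\langle I\rangle}$ upgrades to a rank-$2$ relative Chow motive over $\mathcal O_K$; equip it with the motivic form $\langle\,,\rangle_{1,\mathrm{mot}}$ of \Cref{SCHT_reduction}, for which the decomposition is orthogonal. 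It then suffices to verify the hypotheses of \Cref{thm_ancona}: $\dim_{\bQ}V_B=2$ by the rank count; $\dim_{\bQ}V_Z=2$ because the entire $2$-dimensional realization of the special fibre consists of Tate classes, which are algebraic by the Tate conjecture and carry no numerically trivial part since $\langle\,,\rangle_{1,\mathrm{mot}}$ is nondegenerate on each summand; and $q_B=\langle\,,\rangle_{1,\mathrm{mot},B}$ is a polarization of Hodge structures because it is induced by the polarization of $\frk h^1(\mathcal A)$ provided by the Riemann relations --- precisely the comparison of \Cref{SCHT_reduction} which identifies it, up to a positive scalar, with the intersection-based Hodge form. \Cref{thm_ancona} then gives positivity of $\langle\,,\rangle_{1,\mathrm{mot}}$ on the algebraic cycles of $M_{\langle I\rangle}$, and assembling the three cases through \Cref{SCHT_reduction} proves the standard conjecture of Hodge type for $A$.

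The main obstacle is the bookkeeping in case (c): arranging the passage from $A$ to a CM integral model so that the mildly exotic condition, the orthogonal decomposition of \Cref{SCHT_reduction}, and the input data of \Cref{thm_ancona} all refer to the same relative motive over $\mathcal O_K$, and then pinning down the nondegeneracy that forces $\dim_{\bQ}V_Z=2$ together with the fact that $q_B$ is a polarization and not its opposite (the sign being handled by the positive-scalar comparison of \Cref{SCHT_reduction}). All of this is already carried out for the single exotic motive $M_{\langle I_0\rangle}$ in the proof of \Cref{main_result}; the point of the present statement is that the rank bound built into \Cref{meav} lets that rank-$2$ argument be applied verbatim to \emph{every} exotic summand of $\frk h(A)$, while all remaining summands are either Lefschetz or carry no algebraic cycles at all.
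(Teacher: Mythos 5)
Your proposal follows the same overall route as the paper's (one-line) proof of \Cref{CSTH_me}: decompose $\frk h(A)$ via \Cref{motive_decomposition}, pass to a CM-lift, use the orthogonality of \Cref{SCHT_reduction} to treat the summands separately, dispose of Lefschetz summands with \Cref{SCHT_lefschetz}, and feed the rank-$2$ exotic summands to \Cref{thm_ancona}. The preliminary observations (that each $M_{\langle I\rangle}$ is either Tate-free, Lefschetz, or exotic, by Galois homogeneity; that a rank-$1$ exotic summand cannot exist, so the bound in \Cref{meav} really means ``rank exactly $2$'') are correct and match what the paper's referenced argument does for the single motive $M_{\langle I_0\rangle}$.

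There is, however, one genuine gap. To verify the hypothesis $\dim_{\bQ}V_Z=2$ of \Cref{thm_ancona}, you write ``the entire $2$-dimensional realization of the special fibre consists of Tate classes, which are algebraic by the Tate conjecture''. The Tate conjecture for abelian varieties over finite fields is only known in codimension~$1$ (Tate's theorem on divisors); the classes here live in degree $g \geq 4$, i.e.\ codimension $g/2\geq 2$, where the conjecture is open. So your argument would make the theorem conditional on an open conjecture. The paper's proof of \Cref{main_result} does \emph{not} claim $V_Z$ is all of the Tate classes; it branches: if the algebraic classes in the special fibre of the exotic summand are numerically trivial there is nothing to prove, and otherwise $\dim_\bQ V_Z=2$. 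The reason $\dim_\bQ V_Z\in\{0,2\}$ (i.e.\ $\dim_\bQ V_Z=1$ is impossible) is not the Tate conjecture, but the fact that the exotic summand carries an action of the imaginary quadratic field $Q$ (as in \Cref{determinant_h1}), so $V_Z$ is a $Q$-vector space inside a $2$-dimensional $\bQ$-space and has $\bQ$-dimension $0$ or $2$. Your unconditional application of \Cref{thm_ancona} therefore needs to be replaced by the same dichotomy: either $V_Z=0$ and the positivity statement is vacuous, or $V_Z\neq 0$, in which case the $Q$-module structure forces $\dim_\bQ V_Z=2$ and \Cref{thm_ancona} applies. The secondary issue --- that even granted nonvanishing of $V_Z$ you must rule out $\dim V_Z=1$ --- is precisely what the $Q$-action supplies, and is absent from your argument.
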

\begin{proof}
	Let $A$ be a mildy exotic simple abelian variety of dimension $g$ over $\Fpbar$. Let $A_0$ be a model of $A$ over a finite field $k_0$, which is mildly exotic and has CM by a field $L$. We can assume that all algebraic cycles of $A$ are defined over $k_0$, so that proving the standard conjecture of Hodge type for cycles on $A$ amounts to proving it for cycles on $A_0$. Moreover, if we prove the conjecture for any abelian variety isogenous to $A_0$, then the conjecture for $A_0$ follows. According to \cite[Th\'eor\`eme 2]{tat71}, on a finite extension of $k_0$, $A_0$ becomes isogenous to the reduction of a CM abelian variety in characteristic $0$. That is there exists a $p$-adic field $K$ whose residue field $k_1$ is a finite extension of $k_0$, and an abelian scheme $\mathcal A$ over the ring of integers $\mathcal O_K$ with CM by $L$ such that the special fibre of $\mathcal A$ is isogenous to $A_0 \times_{k_0} k$.
	
	Hence we can assume that $A_0$ is the special fibre of $\mathcal A$ and $k_1=k_0$. Let us consider the decomposition of $\frk h^{2i}(A_0)$ and $\frk h^{2i}(\mathcal A)$ provided by \Cref{motive_decomposition}. According to \Cref{rmk-rank-eigvalues} these decomposition are stable by pullbacks, hence the decomposition of $\frk h^{2i}(A_0)$ is the pullback of the decomposition of $\frk h^{2i}(\mathcal A)$ along the morphism $\Spec(k_0) \to \Spec(\mathcal O_K)$. Using the notations of \ref{not-decompCM}, the assumption that $A_0$ is mildly exotic gives us that the ranks of exotic motives $M_{\langle I \rangle}$ associated to $I \subset \{1, \dots, 2g\}$ in $\CHM(\mathcal O_K)_{\bQ}$ are less than $2$. According to \Cref{SCHT_reduction}, to prove the standard conjecture of Hodge type for $A_0$, it suffices to prove a positivity result for each summand $M_{\langle I \rangle}$ containing algebraic classes. For such a direct summand $M_{\langle I \rangle}$, either it contains only Lefschetz classes, or it is exotic. In the first case the positivity result follows from \Cref{SCHT_lefschetz}. In the second case, $M_{\langle I \rangle}$ is of rank $2$ because $A_0$ is mildly exotic, and the positivity result follows from \Cref{thm_ancona}. Note that $M_{\langle I \rangle}$ cannot be of rank $1$ by \Cref{exotic-rank-one}
\end{proof}
\begin{definition} \label{WTav}
	We will say an abelian variety $A$ over $\bF_q$ is of \textit{Weil--Tate type} if there exists a quadratic subfield $Q$ of $\End^0(A)$ such that the motive
	\[M = {\bigwedge}_Q^g \frk h^1(A)\]
consists of potential Tate classes. We will call $M$ a Weil--Tate submotive of $A$ and cohomology classes in $M$ will be called Weil--Tate classes.
\end{definition}
\begin{remark}
	The previous definition is to link with Weil classes in complex abelian varieties such as described in \cite{MZ98}. There are many results on these Weil classes: Markman in \cite{mar22} proves the Hodge conjecture for many abelian fourfolds of Weil type. In \cite{MZ98} they define Weil classes in presence of CM-fields $L$ of degree $2k$ and give criterions for $\bigwedge_L^{g/k} H^1(A, \bQ)$ to be of type $(\frac{g}{k},\frac{g}{k})$ or to contain exceptional Hodge classes. In \cite[\S 16]{MZ98} they talk about the counterpart for Tate classes, for abelian varieties defined over number fields. \Cref{WTav} differs from the usual Weil classes in two ways:
	\begin{itemize}
	 \item We are dealing with Tate classes instead of Hodge classes.
	 \item We are dealing with abelian varieties over finite fields.
	\end{itemize}
	This is why we decided to use a different name, and to include the name of Tate to the definition.
\end{remark}

\begin{theorem} \label{second_theorem}
	Let $A$ be a geometrically simple mildly exotic abelian variety of dimension $g \geqslant 4$ over a finite field $k_0$, with complex multiplication by a field $L$ containing $F=\bQ(\pi)$ where $\pi$ is the Frobenius of $A$. Then $g$ is even, $A$ is of Weil--Tate type, all exotic submotives of $A$ are Weil--Tate and we have two possibilities:
	\begin{enumerate}
		\item Either $\End^0(A)$ is commutative then $\End^0(A) = L$ and there is a quadratic imaginary subfield $Q$ of $L$, i.e we can write $L = Q \cdot R$ with $R$ a totally real field of even degree $g$. Furthermore, motives with exotic Tate classes among the direct summands of the motive of $A$, are of the form $\det_{Q'}( \mathfrak h^1(A))$ with $Q'$ a quadratic imaginary subfield of $L$.
		\item Or $\End^0(A)$ is noncommutative, then it is a quaternionic algebra and $g/2$ is odd. We also have a description :
		\begin{align*}
			L &= B \cdot R \\
			F &= Q_1 \cdot R
		\end{align*}
		where $B = Q_1 \cdot Q_2$ is an imaginary biquadratic field, $Q_1$ is imaginary quadratic, $Q_2$ real quadratic and $R$ is a totally real field of odd degree $g/2$. Moreover $Q_1$ is the unique quadratic subfield  of $F$, and $\det_{Q_1}(\mathfrak h^1(\mathcal A))$ is the unique exotic submotive of $\mathfrak h(\mathcal A)$.
	\end{enumerate}	 
\end{theorem}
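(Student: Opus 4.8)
The plan is to reduce everything to the Galois combinatorics of \Cref{motive_decomposition}. Fix a Galois closure $\tilde L$ of $L$, put $G = \Gal(\tilde L/\bQ)$, and identify $\{1,\dots,2g\}$ with $\Hom(L,\tilde L)$, on which $G$ acts by post-composition, the complex conjugation of $L$ being a distinguished element $c\in G$ with $\pi_{c(i)} = \overline{\pi_i}$. (One checks at the outset that $F = \bQ(\pi)$ is a CM field, the supersingular-type cases being incompatible with mild exoticness when $g\geqslant 4$.) Frobenius acts on $L_\sigma$ by $\sigma(\pi)$, which depends only on $\sigma|_F$; since $\pi\overline\pi = q$ and $\pi$ generates $F$, two indices $i,j$ satisfy $\pi_i\pi_j = q$ exactly when $\sigma_j|_F = c\circ\sigma_i|_F$. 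I would first extend \Cref{exoset_exomot}: by Tate's theorem the divisor part of $H^2$ is the summand $\bigoplus_{\sigma_j|_F = c\sigma_i|_F} L_{\{i,j\}}$, and the Lefschetz classes are the images of its symmetric powers, so $M_{\langle I\rangle}$ contains a Lefschetz class iff $I$ is a disjoint union of ``divisor pairs'' $\{i,j\}$ with $\sigma_j|_F = c\sigma_i|_F$; hence $M_{\langle I\rangle}$ is exotic iff $\prod_{i\in I}\pi_i = q^{|I|/2}$ and $I$ is not such a union.

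Let $M_{\langle I\rangle}$ be an exotic submotive. Mild exoticness gives $\#\langle I\rangle = \dim_\bQ M_{\langle I\rangle}\leqslant 2$, and $\#\langle I\rangle = 1$ is impossible (then $I$ would be $G$-stable, hence all of $\Hom(L,\tilde L)$ since $G$ acts transitively, making $M_{\langle I\rangle} = \frk h^{2g}(A)$ Lefschetz), so $\#\langle I\rangle = 2$, $H := \Stab_G(I)$ is normal of index $2$, and $c\notin H$. Then $I\cap c(I)$ and $I\cup c(I)$ are $G$-stable, so $c(I)$ is the complement of $I$ and $|I| = g$; thus $I$ is a CM-type of $L$, and since $M_{\langle I\rangle}\subseteq\frk h^g(A)$ carries a Tate class, $g$ is even. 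Taking the distinguished embedding $\sigma_0$, with $\Stab_G(\sigma_0) = G_0 = \Gal(\tilde L/L)$, and assuming $\sigma_0\in I$: any $h_0\in G_0\setminus H$ lies in $cH$, so $h_0(I) = c(I)$ is the complement of $I$, yet $\sigma_0 = h_0(\sigma_0)\in h_0(I)$ --- a contradiction. Hence $G_0\subseteq H$, $Q' := \tilde L^H$ is an imaginary quadratic subfield of $L$, the CM-type $I$ is induced from an embedding of $Q'$, and $M_{\langle I\rangle} = \det_{Q'}\frk h^1(A) = \Lambda^g_{Q'}\frk h^1(A)$. Since $Q'\subseteq L\subseteq\End^0(A)$, this already shows $A$ is of Weil--Tate type and that every exotic submotive is Weil--Tate.

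To place $Q'$ inside $F$, I would look at $\rho\colon I\to\Hom(F,\tilde L)$, $\sigma\mapsto\sigma|_F$. If $Q'\not\subseteq F$ then $Q'\cap F = \bQ$, and as $Q'$ is quadratic --- hence Galois --- $Q'$ and $F$ are linearly disjoint, so $\rho$ is onto with all fibres of the same size $[L:Q'F] = m/2$ (where $m = [L:F]$); pairing the fibre over $\tau$ with the fibre over $c\tau$ then writes $I$ as a disjoint union of divisor pairs, so $M_{\langle I\rangle}$ would be Lefschetz, a contradiction. Thus $Q'\subseteq F$. If $\End^0(A)$ is commutative, $F = L = \End^0(A)$, so $Q'\subseteq L$, and with $R := L^+$ (totally real of degree $g$, $Q'\cap R = \bQ$, $Q'R = L$) we land in case (1), the exotic summands being precisely the $\det_{Q'}\frk h^1(A)$ for imaginary quadratic $Q'\subseteq L$ satisfying $\prod_{\sigma|_{Q'}=\psi}\sigma(\pi) = q^{g/2}$. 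If $\End^0(A)$ is noncommutative, $D := \End^0(A)$ is a division algebra over the CM field $F$ of index $m > 1$, carrying a positive involution of the second kind; and since $Q_1 := Q'\subseteq F$ is imaginary quadratic, $F = Q_1\cdot R$ with $R = F^+$ totally real of degree $g/m$.

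It remains --- in the noncommutative case --- to show that $D$ is quaternionic ($m = 2$, so $R$ has degree $g/2$), that $g/2$ is odd, that $L = B\cdot R$ for an imaginary biquadratic $B = Q_1\cdot Q_2$ with $Q_2$ real quadratic, and that $Q_1$ is the unique quadratic subfield of $F$ (so $\det_{Q_1}\frk h^1(A)$ is the unique exotic submotive). Here I would combine: the norm relation, which for $Q_1\subseteq F$ becomes $N_{F/Q_1}(\pi)^m = q^{g/2}$ and, since $N_{F/Q_1}(\pi)$ lies in the imaginary quadratic field $Q_1$ with $|N_{F/Q_1}(\pi)| = q^{g/(2m)}$, forces $m$ to be small; the Honda--Tate formula for the local invariants $\mathrm{inv}_v(D)$ (for $v\mid p$) in terms of $v(\pi)/v(q)$, together with the Albert constraint $\mathrm{inv}_v(D) + \mathrm{inv}_{\overline v}(D) = 0$; and the divisor-pair criterion once more, to see that any other candidate exotic summand (in particular one coming from a real quadratic subfield of $R$) must be Lefschetz, which both forces $R$ to have no quadratic subfield and yields uniqueness. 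I expect this extraction of the exact biquadratic shape of $F$ and $L$ from the Albert/Honda--Tate data to be the main obstacle; the earlier steps are essentially bookkeeping with the $G$-action and the divisor-pair criterion.
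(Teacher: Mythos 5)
Your first two-thirds is correct and in places gives a cleaner route than the paper's. Your handling of the orbit $\{I, cI\}$ --- observing that $I\cap cI$ and $I\cup cI$ are $G$-stable, hence $cI$ is the complement and $|I|=g$, then noting $h_0(\sigma_0)=\sigma_0$ forces $G_0\subseteq H$ --- recovers the conclusions of the paper's \Cref{lemma_partition} and \Cref{lemma_commutative} efficiently. Your argument that $Q'\subseteq F$ via linear disjointness (if $Q'\cap F=\bQ$ the fibres of $\rho$ have equal size $m/2$, and an arbitrary bijection between the fibres over $\tau$ and $c\tau$ writes $I$ as a union of divisor pairs) is a genuinely different and arguably slicker route to what the paper gets from \Cref{lemma_tech}. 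You should spell out why $I$ equals $\{\sigma:\sigma|_{Q'}=\sigma_0|_{Q'}\}$ rather than merely having the same stabilizer (it follows from $I\supseteq H\sigma_0$ together with $|H\sigma_0|=[H:G_0]=g=|I|$), but that is a minor point.

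The real gap is the noncommutative case, which you yourself flag as the main obstacle. The items left --- $m=2$, $[F:\bQ]/2$ odd, the biquadratic field $B=Q_1\cdot Q_2$, uniqueness of the exotic summand --- carry most of the content of case (2), and your sketch for them is a plan rather than a proof; as far as I can see it would not go through. The Albert classification and the Honda--Tate local-invariant formula recover the Brauer class of $\End^0(A)$ from $\pi$, but by themselves they do not see the mild-exoticness hypothesis, which is a constraint on Tate classes in degree $2i$ for all $i$, not on $\End^0(A)$. Your norm observation gives at best that $m$ is even: setting $x=N_{F/Q_1}(\pi)$, if $x\notin\bQ$ then $x$ and $\bar x$ are both $m$-th roots of $q^{g/2}$, so $x/\bar x$ is a root of unity in $Q_1$, generically $-1$, whence $(-1)^m=1$; this does not single out $m=2$. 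Likewise, ``$R$ has no quadratic subfield'' does not imply $[R:\bQ]$ odd (a degree-$4$ field with Galois group $S_4$ has none), so the divisor-pair criterion as you use it does not yield the parity of $g/2$. What actually drives case (2) in the paper is a combinatorial lemma that feeds mild exoticness back into the $G$-action --- \Cref{lemma_big}, which applies the rank-$\leqslant 2$ constraint to a derived exotic set $S'$ and concludes $\{1,\dots,g\}=J\cup gJ$, giving $g'\geqslant g/2 = g'm/2$ and hence $m\leqslant 2$ directly --- followed by \Cref{lemma_action2} and \Cref{lemma_exosets} for the biquadratic structure and uniqueness. You would need analogues of these (or a genuinely new idea) to close case (2); the algebraic ingredients you propose in their place do not supply the missing combinatorial input.
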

\begin{remark}
	The previous theorem suggests how to construct new examples as in \Cref{main_result}. To construct simple abelian varieties with commutative algebra of endomorphisms, consider $L = Q \cdot R$ with $Q$ quadratic imaginary and $R$ totally real of even degree, this is what is done in \Cref{main_result}. To construct examples with noncommutative algebra of endomorphisms, consider $F = Q_1 \cdot R$ with $Q_1$ an imaginary quadratic field and $R$ a totally real field of odd dimension then set $L = Q_2 \cdot F$ with $Q_2$ real quadratic. Moreover, if $R$ has maximal Galois closure, with some $p$-adic condions on $Q_1$, $Q_2$ and $R$, we can construct a mildly exotic abelian variety $A$ with complex multiplication by $L$.
\end{remark}
	\begin{theorem}[Examples, noncommutative case] \label{nc_case}
		For all $g \geqslant 6$ with $g \equiv 2 \bmod 4$, and all prime numbers $p$, there exists infinitely many simple mildly exotic abelian varieties $A$ over $\overline \bF_p$ of dimension $g$, whose algebra of endomorphisms is noncommutative.
	\end{theorem}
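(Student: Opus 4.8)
The plan is to mirror the construction of \Cref{main_proof}, following the recipe for the noncommutative case sketched in the remark after \Cref{second_theorem}: prescribe a CM field $L$ of degree $4g'$ together with a carefully chosen CM-type, invoke \Cref{thm_bourbaki}, and then read off all Tate classes from the Galois action on the Frobenius. Set $d = 2g'$, so $d/2 = g'$ is odd. As in \Cref{fields_construction}, the approximation theorem produces infinitely many totally real fields $R$ of degree $g'$ with $\Gal(\tilde R/\bQ) \cong S_{g'}$ and with a prescribed splitting of $p$; here we want $p$ to split in $R$ into two primes of distinct residue degrees $a \ne b$, $a+b = g'$ (this $p$-adic condition is compatible with the $\ell$-adic ones forcing the Galois group, and $a \ne b$ is automatic since $g'$ is odd). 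We then fix an imaginary quadratic field $Q_1$ in which $p$ \emph{splits} — this is what lets the Frobenius have asymmetric slopes — a real quadratic field $Q_2$ with suitable $p$-adic behaviour, and set $B = Q_1 \cdot Q_2$ (an imaginary biquadratic field), $F = Q_1 \cdot R$, $L = Q_2 \cdot F = B \cdot R$. Since $R$ has no quadratic subfield, $F$ is a CM field of degree $d$ with unique quadratic subfield $Q_1$, $L$ is a CM field of degree $2d$ with maximal totally real subfield $Q_2 \cdot R$, and $\tilde L = B \cdot \tilde R$ is a Galois closure of $L$ with $G := \Gal(\tilde L/\bQ) \cong (\mu_2 \times \mu_2) \times S_{g'}$, the two $\mu_2$ factors being $\Gal(Q_1/\bQ)$ and $\Gal(Q_2/\bQ)$.

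A Tate-style combinatorial argument (as in \Cref{CMtype_construction}) produces a CM-type $\Phi$ on $L$ realizing a prescribed distribution $(\#\Phi_w)_{w \mid p}$, subject only to $\#\Phi_w + \#\Phi_{\bar w} = \#H_w$. We arrange the distribution so that the Frobenius $\pi$ furnished by \Cref{thm_bourbaki} and \Cref{S-T_formula} satisfies: its $p$-adic valuations are asymmetric enough to force $\pi \notin Q_2 \cdot R$ and $\pi$ non-supersingular; and, writing the four places of $F$ over $p$ as two complex-conjugate pairs of residue degrees $a,a$ and $b,b$ with $v(\pi)$ equal to $\alpha, 1-\alpha$ resp.\ $\beta, 1-\beta$ on them (normalizing $v(q)=1$), the balancing identity $2a\alpha + 2b\beta = g'$ holds — which says exactly that $N_{F/Q_1}(\pi) \in Q_1$ has equal valuations at the two primes of $Q_1$ over $p$. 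A short linear check (using $a \ne b$) shows such a $\Phi$ exists. Let $\mathcal A/\mathcal O_K$ be the resulting abelian scheme with CM by $L$ and $A$ its special fibre. Computing $\Fix_G(\pi)$ inside $(\mu_2 \times \mu_2) \times S_{g'}$ exactly as in \Cref{end_computation} — the slope asymmetry rules out every proper subfield but $F$, and a group-theoretic step identifies $\Fix_G(\pi) = \Fix_G(F)$ — gives $\bQ(\pi) = F$, of degree $d$. Since $2\dim A = 2d = [\End^0(A):\bQ(\pi)]^{1/2}[\bQ(\pi):\bQ]$, the algebra $\End^0(A)$ is $4$-dimensional over its centre $F$; because $\pi$ is non-supersingular, a Honda--Tate local-invariant computation shows $\End^0(A)$ is a nonsplit quaternion algebra, hence noncommutative, and $A$ is simple.

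Pinning down the $G$-action on the conjugates of $\pi$ as in \Cref{gal_computation_2} (now each Frobenius eigenvalue on $\frk h^1(A)$ occurs with multiplicity $[L:F] = 2$), one checks that the Frobenius eigenvalue on the rank-$2$ submotive $M := \det_{Q_1}\frk h^1(\mathcal A) = \Lambda^{d}_{Q_1}\frk h^1(\mathcal A)$ — the Weil--Tate submotive attached to $Q_1$ in the sense of \Cref{WTav}, which is the $M_{\langle I_0 \rangle}$ for $I_0$ the set of embeddings $L \hookrightarrow \tilde L$ restricting to a fixed embedding of $Q_1$ — equals $N_{F/Q_1}(\pi)^2$. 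The balancing identity says $N_{F/Q_1}(\pi)$ is $p$-potentially in $\bQ$ in the sense of \Cref{weil_numbers_minimality}; that proposition then gives a power $N_{F/Q_1}(\pi)^k \in \bQ$. Replacing $\mathcal A$ by its base change along an unramified extension of degree $k$ — the device of \Cref{relation} — we may assume $N_{F/Q_1}(\pi)^2 = q^{g'}$, i.e.\ $M$ is full of Tate classes; and since $I_0$ is not stable under complex conjugation (which swaps the two embeddings of $Q_1$), the analogue of \Cref{exoset_exomot} shows these classes are exotic. So $A$ carries an exotic submotive of rank $2$.

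It remains to show $M$ is the \emph{only} exotic submotive, and this is the technical core. One reruns the combinatorial analysis of \Cref{lemma_relations}: given a $G$-orbit $\langle I \rangle \ne \langle I_0 \rangle$ with $\prod_{i \in I}\pi_i = q^{|I|/2}$, one uses the explicit $G$-action — now over $(\mu_2 \times \mu_2) \times S_{g'}$ and with doubled eigenvalues — together with transpositions in $S_{g'}$ to modify $I$, and concludes that the relation must be a product of the elementary relations $\pi_i \overline{\pi_i} = q$; hence $I$ is complex-conjugation stable and $M_{\langle I \rangle}$ contains a Lefschetz class by \Cref{exoset_exomot}. I expect this to be the main obstacle: the bookkeeping is heavier than in \Cref{lemma_relations} because of the biquadratic factor $B$ and the doubled eigenvalues, and one must rule out relations mixing the ``halves'' cut out by $Q_1$ with those cut out by $Q_2$. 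Granting it, $M$ is the unique exotic submotive, of rank $2$, so $A$ is mildly exotic; it is simple of dimension $2g'$ with noncommutative endomorphism algebra, and distinct $R$ give pairwise non-isogenous such $A$ (their endomorphism algebras have distinct centres $Q_1 \cdot R$), which proves the theorem. (By \Cref{CSTH_me}, each such $A$ moreover satisfies the standard conjecture of Hodge type.)
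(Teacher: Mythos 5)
Your proposal follows the paper's blueprint in outline: build $L = B\cdot R_0$ of degree $4g'$ with $B = Q_1\cdot Q_2$ imaginary biquadratic and $R_0$ totally real of degree $g'$ with $\Gal(\tilde R_0/\bQ)\cong S_{g'}$, use \Cref{thm_bourbaki} and \Cref{S-T_formula} to get $\mathcal A$ and $\pi$, argue $\bQ(\pi)=F=Q_1\cdot R_0$ and that $\End^0$ is quaternionic, and locate the unique exotic submotive as $\det_{Q_1}\mathfrak h^1(\mathcal A)$. However, there is a substantive deviation: you posit that $p$ \emph{splits} in $Q_1$, and you justify this by saying it is ``what lets the Frobenius have asymmetric slopes.'' This is wrong both as motivation and as a match with the paper's construction: in \Cref{ramified_diagram} the imaginary quadratic $Q$ has $p$ \emph{inert} (so $Q\otimes\Qp=\bQ_{p^2}$), and the asymmetric slopes are produced purely by the choice of CM-type $\Phi$ with $\#\Phi_v=1$, $\#\Phi_{\bar v}=2g'-3$, $\#\Phi_w=2$ in \Cref{nc_abelianscheme}. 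Inertness is not an accident — it is what makes the Weil--Tate relation automatic, because \Cref{easy_case} applied to $Q/\bQ$ immediately gives $N_{L/Q}(\pi)^k\in\bQ$ for some $k$.

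With your split $Q_1$, that relation is no longer free; you need a genuine balance condition ($a\alpha+b\beta=g'/2$ in your notation), and this interacts nontrivially with the requirements that $\pi$ be non-supersingular and that the local Brauer invariants of $\End^0$ be of order $2$. For instance, the natural-looking choice $a\alpha=1/2$, $b\beta=(g'-1)/2$ with $a=1$, $b=g'-1$ makes every slope equal to $1/2$, so $\pi$ is supersingular and $\bQ(\pi)\ne F$; other choices do work, but you have not identified which, and the claim that ``a short linear check (using $a\ne b$) shows such a $\Phi$ exists'' glosses over exactly where the construction can go wrong. You also leave the $p$-adic behaviour of $Q_2$ unspecified, yet it is essential: to conclude $\pi^k\in F$ (via \Cref{easy_case} applied to $L/F$) you need each place of $F$ above $p$ to have a unique extension to $L$, which the paper arranges by making both $Q'$ and $B_{\bR}$ \emph{ramified} at $p$ so that $L/F$ is ramified everywhere above $p$. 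Finally, you explicitly punt on the combinatorial uniqueness argument, which is the content of \Cref{lemma_relations} adapted to this setting. So while the overall strategy is the right one, the specific $p$-adic configuration you describe does not match the paper's and introduces several constraints you have not verified.
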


	\begin{theorem}[Examples, two exotic submotives] \label{com_case}
		For all $g \geqslant 6$ with $g \equiv 2 \bmod 4$, and all prime numbers $p$, there exists infinitely many simple mildly exotic abelian varieties $A$ over $\overline \bF_p$ of dimension $g$, whose algebra of endomorphisms is commutative and with two distinct exotic submotives.
	\end{theorem}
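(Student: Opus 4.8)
The strategy is to run the proof of \Cref{main_result} over a CM field $L$ of degree $2g$ (with $g=2g'$) that contains \emph{two} imaginary quadratic subfields rather than one; this is the situation singled out in the first case of \Cref{second_theorem} and in the remark following it. First I would fix the prime $p$ and the odd integer $g'\geqslant 3$ and choose an imaginary biquadratic field $B$ whose three quadratic subfields are two imaginary fields $Q_1,Q_2$ and a real field $Q_3$, together with a controlled splitting of $p$ in $B$ (the Frobenius at $p$ is a prescribed element of $\Gal(B/\bQ)\cong(\bZ/2)^2$; in particular $p$ should be inert in at least one of $Q_1,Q_2$). Then, by the approximation theorem exactly as in \Cref{fields_construction}, I would produce infinitely many totally real fields $R_0$ of degree $g'$ with $\Gal(\tilde R_0/\bQ)=S_{g'}$, suitably placed relative to $\tilde B$ and with a prescribed decomposition type of $p$, so that $R:=Q_3R_0$ is totally real of degree $g$ and $L:=BR_0=Q_1R=Q_2R$ is a CM field of degree $2g$ whose only quadratic subfields are $Q_1,Q_2,Q_3$; then $\tilde L=B\tilde R_0$ and $G:=\Gal(\tilde L/\bQ)$ is (a subgroup of index $\leqslant 2$ in) $(\bZ/2)^2\times S_{g'}$, and the factorisation of $p$ in $L$ is under control. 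The role of the $p$-adic bookkeeping here is to make the two conditions of the next step simultaneously achievable.

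Next I would pick a CM-type $\Phi$ on $L$; by \Cref{thm_bourbaki} there is an abelian scheme $\mathcal A/\mathcal O_K$ with CM by $(L,\Phi)$, and by the Shimura--Taniyama formula \Cref{S-T_formula} a Frobenius $\pi\in L$ of its special fibre $A$ with prescribed $p$-adic valuations. As in \Cref{relation}, after replacing $\pi$ by a power (base change to a larger finite field) I want
\[
  \bQ(\pi)=L \qquad\text{and}\qquad N_{L/Q_1}(\pi)=N_{L/Q_2}(\pi)=q^{g/2}.
\]
The first equality — which makes $A$ geometrically simple with $\End^0(A)=L$ commutative — is a Galois computation in the spirit of \Cref{end_computation}. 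For the norm relations: since $p$ is inert in $Q_1$, the relation $N_{L/Q_1}(\pi)=q^{g/2}$ is automatic from \Cref{easy_case} and \Cref{weil_numbers_minimality} (the $Q_1$‑determinant is ``of Weil--Tate type''); the relation for $Q_2$ is a genuine numerical constraint on $\Phi$ — essentially that a certain partial sum of the $\#\Phi_{\frkp}$ equals $g/2$ — and the point is that such a $\Phi$ can be chosen \emph{together with} $\bQ(\pi)=L$, avoiding the degenerate situations in which $\pi$ falls into a proper subfield and $\End^0(A)$ ceases to be $L$ (these are exactly the noncommutative examples of \Cref{nc_case}).

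With the notation of \Cref{motive_decomposition}, the index sets $I_0^{(1)},I_0^{(2)}\subset\{1,\dots,2g\}$ attached to the two embeddings of $Q_1,Q_2$ give submotives $M_{\langle I_0^{(1)}\rangle}=\det_{Q_1}\mathfrak h^1(A)$ and $M_{\langle I_0^{(2)}\rangle}=\det_{Q_2}\mathfrak h^1(A)$ (cf. \Cref{determinant_h1}), each of rank $2$ (stabiliser $\Gal(\tilde L/Q_j)$ has index $2$ in $G$), each full of Tate classes by the norm relations, and each exotic: $I_0^{(j)}$ is disjoint from its complex conjugate, hence not stable under $i\mapsto i+g$, so \Cref{exoset_exomot} forbids Lefschetz classes inside it. They are distinct since $Q_1\neq Q_2$. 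It remains to prove the analogue of \Cref{lemma_relations}: any $I$ with $\prod_{i\in I}\pi_i=q^{\#I/2}$ and $\langle I\rangle\notin\{\langle I_0^{(1)}\rangle,\langle I_0^{(2)}\rangle\}$ is, up to the $G$-action, a disjoint union of pairs $\{i,i+g\}$, so that $M_{\langle I\rangle}$ is Lefschetz. The argument would follow \Cref{lemma_relations}: first reduce a putative nontrivial relation using $\pi_i\pi_{i+g}=q$, then transport it by transpositions in $S_{g'}$ and by elements of $(\bZ/2)^2$ and divide, contradicting the distinctness of the $2g$ conjugates of $\pi$; the only new wrinkle is that there are now two distinguished exotic relations and the acting group is $(\bZ/2)^2\times S_{g'}$ rather than $\mu_2\times S_g$. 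This shows that $A$ is mildly exotic, with exactly the two exotic submotives $\det_{Q_1}\mathfrak h^1(A)$ and $\det_{Q_2}\mathfrak h^1(A)$, in accordance with \Cref{second_theorem}.

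Being geometrically simple and mildly exotic, $A$ satisfies the standard conjecture of Hodge type by \Cref{CSTH_me}; it has commutative endomorphism algebra $\End^0(A)=L$ and two distinct exotic submotives, and letting $R_0$ range over the infinitely many fields produced in the first step yields infinitely many pairwise non-isogenous such $A$ of dimension $g=2g'$, which proves \Cref{com_case}. I expect the genuine obstacles to be, first, the calibration in the second step — arranging the $p$-adic conditions and the CM-type so that the $Q_2$-determinant becomes Tate without destroying $\bQ(\pi)=L$ — and, second, the combinatorial classification of exotic submotives in the third step.
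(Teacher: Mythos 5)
Your outline correctly identifies the construction the paper uses — a biquadratic $B$ with two imaginary quadratic subfields $Q_1,Q_2$ and a real quadratic one, multiplied by a totally real $R_0$ of degree $g'$ with $\Gal(\tilde R_0/\bQ)=S_{g'}$, so that $L$ of degree $2g=4g'$ has two $Q$-determinants $\det_{Q_1}\mathfrak h^1(A)$, $\det_{Q_2}\mathfrak h^1(A)$ — and you correctly track how the two norm relations should produce the two exotic rank-$2$ summands. However, you set up the $p$-adic data so that only \emph{one} of the two ``Weil--Tate'' relations is automatic, saying ``$p$ should be inert in at least one of $Q_1,Q_2$'' and then treating $N_{L/Q_2}(\pi)=q^{g/2}$ as a genuine numerical constraint on $\Phi$ to be calibrated by hand. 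This is where you part ways with the paper and take on an unnecessary (and not carried out) step.

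The paper instead chooses $p$ to be inert in \emph{both} $Q$ and $Q'$ and hence split in the real quadratic subfield $B_{\bR}$ — this is exactly what the local diagram in \Cref{nr_diagram} encodes, with $L\otimes\Qp\cong W^4\times\bQ_{p^2}^2$ (where $W$ is unramified of degree $g'-1$). With $p$ inert in both imaginary quadratics, \emph{both} norms $N_{L/Q}(\pi)$ and $N_{L/Q'}(\pi)$ are Weil numbers in fields where $p$ is inert, so both are $q^{g/2}$ after a base change, by the same argument as \Cref{easy_case}/\Cref{relation}; no extra constraint on $\Phi$ is needed. The only thing $\Phi$ has to do is keep $\pi$ out of proper subfields of $L$, i.e.\ ensure $\End^0(A)=L$ is commutative and $A$ is simple; the paper arranges this in \Cref{com_abelianscheme} by imposing $v_1(\pi)=0\neq v_2(\pi)=\tfrac{1}{g'-1}$ and $v_1(\pi)\neq\overline{v_1}(\pi)$, which forces $\bQ(\pi)=L$. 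Your version can probably be made to work, but you would have to verify that a CM-type satisfying the $Q_2$-norm constraint can coexist with the requirement $\bQ(\pi)=L$, which you correctly flag as the genuine difficulty; the paper's choice of $p$-adic data sidesteps that difficulty entirely, and you should adopt it.
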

	We will give a proof of \Cref{nc_case} and \Cref{com_case} in \Cref{proof_nc_com}.
\begin{proof}[Proof of \Cref{second_theorem}]
	Let $\pi$ be the Frobenius of $A$, according to Tate's theorem \cite{tat66}, $\bQ(\pi)$ is the center of $\End^0(A)$ and $\bQ(\pi) \subset L$. Let $m$ be the degree of the previous extension. We have
	\begin{align*}
		[L:\bQ] &= 2g \\
		[L: \bQ(\pi)] &= m
	\end{align*}
	Let $2g'$ be the degree of $\pi$, we have $2g = 2mg'$. \\
	Let $\tilde L$ be a normal closure of $L$, and $G = \Gal(\tilde L/\bQ)$. By naming $\sigma_1, \dots, \sigma_{2g}$ the embeddings of $L$ in $\tilde L$, we get a transitive action of $G$ on the set $\{1, \dots, 2g \}$. According to \Cref{motive_decomposition}, the direct summands are parametrized by the conjugacy classes of subsets $I \subset \{1, \dots, 2g\}$. Moreover by \Cref{rmk-rank-eigvalues}, the rank of $M_{\langle I \rangle}$ is the cardinal of the orbit. By the first condition of \Cref{meav}, we get that there exists $I_0$ which is of even size $2r \leqslant g$ (if $2r > g$, replace $I_0$ by its complement), and such that $R_{\ell}(M_{\langle I_0 \rangle})$ is of dimension $2$. Hence the orbit of $I_0$ is of cardinal $2$, say it is $\{I_0, \tau I_0\}$ where $\tau$ is the complex conjugation (according to \Cref{exoset_exomot}, $I_0 \not = \tau I_0$ because $M_{\langle I_0 \rangle}$ does not contain nonzero Lefschetz classes).  \\
	By the \Cref{lemma_partition}, we get 
	\begin{align*}
		\#\{1, \dots, 2g\} &\leqslant \# I_0 + \# \left (\tau I_0 \right )\\
		&\leqslant 2 \times 2r.
	\end{align*}
	This gives $g \leqslant 2r$. On the other hand, we had $g \geqslant 2r$, hence $g=2r$ is even. Hence the previous inequalities are equalities, so that $I_0$ and $\tau I_0$ are disjoint.
	
	We can rename $\sigma_1, \dots, \sigma_{2g}$ so that $I_0 = \{1, \dots, g\}$ and $\overline \sigma_i = \sigma_{i+g}$.
	
	 For $0 \leqslant i \leqslant 2g$, let $\pi_i = \sigma_i(\pi)$. In this context $\pi_1, \dots, \pi_{2g}$ are the conjugates of $\pi$ in $L$, given with multiplicity $m$.
	According to \cite[Theorem 2]{tat66}, the commutativity of $\End^0(A)$ is equivalent to $m=1$, so the first case is treated by \Cref{lemma_commutative}, whereas the noncommutative case is the case where $m \geqslant 2$, which is treated by \Cref{lemma_noncommutative}.
\end{proof}
	\begin{lemma} \label{lemma_partition}
		In the context of the proof of \Cref{second_theorem}, for any $I\subset \{1, \dots, 2g\}$ such that $M_{\langle I \rangle}$ is exotic, we have $\{1, \dots, 2g\} = I \cup \tau I$.
	\end{lemma}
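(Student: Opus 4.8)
The plan is to exploit the \emph{mildly exotic} hypothesis of \Cref{second_theorem}, which forces the $G$-orbit $\langle I\rangle$ to have at most two elements. Together with the transitivity of $G=\Gal(\tilde L/\bQ)$ on $\{1,\dots,2g\}$, this turns the statement into an elementary fact about index-two subgroups of transitive permutation groups, so the argument will be essentially group-theoretic.

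First I would extract the constraints that exoticity of $M_{\langle I\rangle}$ puts on $I$. A Tate class lives in even cohomological degree, so $|I|$ is even; and $I$ is neither empty nor all of $\{1,\dots,2g\}$, since $\mathfrak h^0(A)$ and $\mathfrak h^{2g}(A)$ are spanned by Lefschetz classes and hence are not exotic. Most importantly, $I$ is not stable under complex conjugation $\tau$: a $\tau$-stable index set of even cardinality is a disjoint union of conjugate pairs, so the corresponding motive occurs as a direct summand of a symmetric power of the divisor submotive of $\mathfrak h^2(A)$, exactly as in the proof of \Cref{exoset_exomot}, whence its cohomology classes are products of divisors and $M_{\langle I\rangle}$ would fail to be exotic. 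Thus $\tau I\neq I$.

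Next I would analyse the stabiliser $H:=\mathrm{Stab}_G(I)$. Mild exoticity makes the rank of $M_{\langle I\rangle}$, which equals $\#\langle I\rangle$, at most $2$; since $\tau I$ lies in $\langle I\rangle$ and is different from $I$, the orbit is exactly $\{I,\tau I\}$, so $[G:H]=2$ and $\tau\notin H$. As $G$ is transitive on $\{1,\dots,2g\}$ and $[G:H]=2$, this set decomposes into at most two $H$-orbits; it cannot be a single $H$-orbit, for then the only $H$-stable subsets would be $\emptyset$ and $\{1,\dots,2g\}$, contradicting that $I$ is $H$-stable, nonempty and proper. So there are exactly two $H$-orbits $O_1$ and $O_2$, and any element of $G\setminus H$ permutes them but cannot fix both (otherwise the subgroup generated by $H$ and that element, which is all of $G$, would stabilise $O_1$ and hence not act transitively), so it interchanges them; applying this to $\tau$ gives $\tau O_1=O_2$. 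Since $I$ is $H$-stable, nonempty and proper, it equals $O_1$ or $O_2$; relabelling so that $I=O_1$, we conclude $I\cup\tau I=O_1\cup O_2=\{1,\dots,2g\}$.

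I do not anticipate a genuine obstacle, as the decisive point is the orbit-counting just described. The step that deserves a careful word is the input that $\tau$-stable index sets produce Lefschetz (hence non-exotic) motives: \Cref{exoset_exomot} is stated there for the specific abelian variety of \Cref{abelianscheme_construction}, but its proof relies only on Tate's description of the motive of divisors and on the multilinear-algebra structure of \Cref{motive_decomposition}, so it carries over verbatim to the general geometrically simple CM abelian variety $A$ considered here.
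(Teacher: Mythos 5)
Your argument is correct and rests on the same two facts the paper uses: mild exoticity forces the orbit $\langle I\rangle$ to be $\{I,\tau I\}$ with $\tau I\neq I$, and $G=\Gal(\tilde L/\bQ)$ acts transitively on $\{1,\dots,2g\}$. Where you diverge is in the execution. The paper converts these two facts into the conclusion in one line: fix $i\in I$, and for arbitrary $j$ use transitivity to pick $\sigma'\in G$ with $\sigma'(i)=j$; then $j\in\sigma'I\in\{I,\tau I\}$, so $j\in I\cup\tau I$. Your route through the index-two stabiliser $\mathrm{Stab}_G(I)$ and its orbit decomposition of $\{1,\dots,2g\}$ is longer but also valid, and in fact gives you more for free: because $I$ and $\tau I$ are the two $H$-orbits you obtain $I\cap\tau I=\emptyset$ and hence $|I|=g$ directly, whereas the paper deduces these only in the body of the proof of Theorem~\ref{second_theorem} by combining the lemma with the a priori normalization $|I|\le g$. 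Your remark that Lemma~\ref{exoset_exomot} must be transported from the specific $\mathcal A$ of Lemma~\ref{abelianscheme_construction} to the general geometrically simple CM abelian variety is well taken; the paper makes the same tacit transfer, and your justification (the proof uses only Tate's description of divisors and the multilinear structure of Proposition~\ref{motive_decomposition}) is the right one. The preliminary observation that $|I|$ is even plays no role in the argument and could be dropped.
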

	\begin{proof}
	 	As $M_{\langle I \rangle}$ has rank $2$ and $I \not = \tau I$, we get $\langle I \rangle = \{I, \tau I\}$. Fix some $i \in I$, for any $1 \leqslant j \leqslant 2g$, we can choose $\sigma$ such that 
		$$\sigma (i) = j.$$
		For such $j$ and $\sigma$ we have $j \in \sigma I$. As $\sigma I \in \{I, \tau I\}$, we conclude $j \in I \cup \tau I$.
	\end{proof}
	\begin{proposition} \label{lemma_commutative}
		Assume we have an abelian variety $A$ in the same setup as in \Cref{second_theorem}, with $\End^0(A)$ commutative. Then $\End^0(A) = L$, and $L$ contains a quadratic imaginary subfield $Q$. Moreover, the motives $\det_{Q'} \left (\mathfrak h^1(A) \right )$ where $Q'$ is a quadratic imaginary subfield of $L$ are the only submotives of $\mathfrak h(A)$ containing exotic Tate classes.
	\end{proposition}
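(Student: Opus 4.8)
The plan is to dispose of the identity $\End^0(A)=L$ at once, and then, for each exotic direct summand $M_{\langle I\rangle}$ of $\mathfrak h(A)$, to extract from $I$ a quadratic imaginary subfield of $L$ and to identify $M_{\langle I\rangle}$ with the corresponding $Q$-linear determinant of $\mathfrak h^1(A)$. The identity is immediate: commutativity forces $\End^0(A)$ to be its own centre, which by Tate is $\bQ(\pi)$, and since $\bQ(\pi)\subseteq L\subseteq\End^0(A)$ with $[L:\bQ]=2g=2\dim A$ maximal among commutative subalgebras of $\End^0(A)$, one gets $\bQ(\pi)=L=\End^0(A)$ (this is the case $m=1$ of the proof of \Cref{second_theorem}). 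I keep the notation of that proof: $\tilde L$ a Galois closure of $L$, $G=\Gal(\tilde L/\bQ)$ acting transitively on the $2g$ pairwise distinct conjugates $\pi_1,\dots,\pi_{2g}$ of $\pi$, $\tau\in G$ the (central) complex conjugation, and each summand $M_{\langle I\rangle}$ of rank $\#\langle I\rangle=[G:\mathrm{Stab}_G(I)]$; everything now reduces to the analysis of the sets $I$ with $M_{\langle I\rangle}$ exotic.

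First I would determine the shape of such an $I$. Its rank is at most $2$ because $A$ is mildly exotic (\Cref{meav}), nonzero because $M_{\langle I\rangle}$ is a nonzero space of Tate classes, and not $1$ because the only $G$-stable subsets of $\{1,\dots,2g\}$ are $\emptyset$ and the whole set, the latter being stable under complex conjugation and hence Lefschetz by \Cref{exoset_exomot}; so $\langle I\rangle$ has two elements, and being $\tau$-stable with $I\neq\tau I$ (again \Cref{exoset_exomot}, as $M_{\langle I\rangle}$ carries no Lefschetz class) it equals $\{I,\tau I\}$. Now I apply \Cref{lemma_partition} to $I$ and to its complement $I^{c}$ — whose summand $M_{\langle I^{c}\rangle}$ is Poincar\'e-dual up to Tate twist to $M_{\langle I\rangle}$, hence again exotic — to get $I\cup\tau I=\{1,\dots,2g\}$ and $I\cap\tau I=\emptyset$, so $I\sqcup\tau I=\{1,\dots,2g\}$ and $\#I=g$. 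Replacing $I$ by $\tau I$ if necessary, $1\in I$. Put $H:=\mathrm{Stab}_G(I)$, a subgroup of index $2$ (hence normal) with $\tau\notin H$. Any $\sigma\in G$ fixing the point $1$ sends $I$ to an element of $\{I,\tau I\}$ containing $\sigma(1)=1\notin\tau I$, hence to $I$; thus $\mathrm{Stab}_G(1)\subseteq H$, and consequently $Q:=\tilde L^{H}\subseteq\tilde L^{\mathrm{Stab}_G(1)}=L$. Since $[Q:\bQ]=2$ and complex conjugation acts nontrivially on $Q$, the field $Q$ is imaginary quadratic — so $L$ contains a quadratic imaginary subfield (and in fact $L=Q\cdot R$ with $R$ its totally real subfield, of even degree $g$).

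It remains to show $M_{\langle I\rangle}=\det_Q\mathfrak h^1(A)$. Since $\mathrm{Stab}_G(1)\subseteq H$ and $[G:H]=2$, every $H$-orbit on $\{1,\dots,2g\}$ has size $g$, so there are exactly two of them, swapped by complex conjugation; being $H$-stable of size $g$, the set $I$ is the one containing $1$. On the other hand, repeating the construction of \Cref{determinant_h1} with $Q$ in place of the field used there: regarding $\mathfrak h^1(A)$ with its $Q$-action, the determinant $\Lambda^g_Q\mathfrak h^1(A)$ has $Q$-dimension $1$, and after forgetting its $Q$-structure it is the rank-$2$ summand $M_{\langle J\rangle}$ of $\mathfrak h^{g}(A)$, where $J=\{\,i:\sigma_i|_Q\text{ is the embedding of }Q\text{ fixed by }H\,\}$; since $\sigma_1|_Q$ is precisely that embedding and $J$ is $H$-stable, $J$ is again the $H$-orbit of $1$, i.e.\ $J=I$. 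Hence $M_{\langle I\rangle}=\det_Q\mathfrak h^1(A)$, and as $I$ was an arbitrary exotic summand, the summands of $\mathfrak h(A)$ carrying exotic Tate classes are exactly those among the $\det_{Q'}\mathfrak h^1(A)$ with $Q'\subseteq L$ imaginary quadratic. The step I expect to require genuine care is this last one: setting up cleanly the dictionary between the splitting of $\mathfrak h^1(A)$ into $Q$-eigenspaces and the partition of $\{1,\dots,2g\}$ into $H$-orbits, so that \enquote{the $Q$-linear determinant} is literally one of the canonical summands $M_{\langle I\rangle}$ of \Cref{motive_decomposition}; the remainder is elementary group theory together with bookkeeping about complex conjugation and complements.
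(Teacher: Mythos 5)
Your proof is correct and takes essentially the same route as the paper: commutativity forces $\End^0(A)=L=\bQ(\pi)$, an exotic summand $M_{\langle I\rangle}$ is analysed via \Cref{lemma_partition}, the stabiliser $\Stab_G(I)$ is an index-2 subgroup containing $\Stab_G(1)$ whose fixed field is the desired imaginary quadratic $Q'\subseteq L$, and $M_{\langle I\rangle}$ is identified with $\det_{Q'}\mathfrak h^1(A)$ as in \Cref{determinant_h1}. The one small departure is cosmetic but nice: where the paper carries $\#I=g$ and $I\cap\tau I=\emptyset$ forward from the proof of \Cref{second_theorem} (where they are obtained by taking $\#I\leqslant g$ without loss and combining with \Cref{lemma_partition}), you re-derive both directly by applying \Cref{lemma_partition} to $I$ and to the Poincar\'e-dual summand $M_{\langle I^{c}\rangle}$; this makes the step self-contained without assuming $\#I\leqslant g$ up front. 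The rest — the count of $H$-orbits, the $\tau$-equivariance, and the dictionary between $H$-orbits and embeddings $\sigma_i|_{Q}$ identifying $J$ with $I$ — matches the paper's argument in substance.
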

	\begin{remark}
		In the previous lemma, there might be two quadratic subfields of $L$. That is why there might be more than one exotic submotive of $\mathfrak h(A)$.
	\end{remark}
	\begin{proof}
	Recall that $L$ is a maximal commutative subalgebra of $\End^0(A)$. As $\End^0(A)$ is itself commutative, we have $\End^0(A) = L$. The submotives of $\frk h^*(A)$ with exotic Tate classes are of the form $M_{\langle I' \rangle}$ where 
	\[I' \subset \{1, \dots,2g \}\]
	 is of cardinal $g$. Indeed, if $I'$ has size $s$ is such that $M_{\langle I'\rangle}$ is exotic, then so is its complementary, and by \Cref{lemma_partition} we get $s \leqslant g$ and $2g - s \leqslant g$. 
	
	Hence we have to show for all such $I'$ that there is a quadratic subfield $Q'$ of $L$ such that $M_{\langle I' \rangle} = \det_{Q'} \mathfrak h^1( \mathcal A)$. We will do so by Galois theory: we have $H = \Fix(\pi_1) \subset G$, and we want to construct a subgroup $Z$ of $G$ containing $H$ and of index $2$ inside $G$. Assume $1 \in I'$, and let us consider the transitive action of $G$ on $\{I', \tau I'\}$, and $Z = \Fix(I')$. Because the action is transitive, $Z$ has index $2$ in $G$, and the complex conjugation $\tau$ is not in $Z$. \\
	Hence, there correspond by Galois theory a subfield $Q'$ of $\tilde L$ which is quadratic and imaginary. If $\sigma \in H$, then $\sigma(\pi_1) = \pi_1$, hence $\sigma(I') \cap I' \not = \emptyset$. It follows that $I' = \sigma I'$, which means that $\sigma \in Z$. Hence $H \subset Z$, so that $Q' \subset L$ by Galois theory. Moreover the $Q'$-conjugates of $\pi_1$ are $\{\pi_i| \; i\in I'\}$, so that $M_{\langle I \rangle} = \det_{Q'}(\mathfrak h^1(\mathcal A))$ as in \Cref{determinant_h1}.
	\end{proof}	
	\begin{lemma} \label{lemma_tech}
		In the setup of the proof of \Cref{second_theorem}, we have that 
		\[\{\pi_1, \dots, \pi_g \} \text{ and } \{\pi_{g+1}, \dots, \pi_{2g} \} \] are disjoint.
	\end{lemma}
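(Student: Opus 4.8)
The plan is to argue by contradiction and to extract the absurdity from \Cref{lemma_partition}. If the two sets shared an element, I would remove two well-chosen indices from $I_0$ to obtain a strictly smaller, still even, subset $J$ that again defines an exotic summand $M_{\langle J\rangle}$; then \Cref{lemma_partition} would force the set $J\cup\tau J$, whose size is at most $2(g-2)$, to exhaust $\{1,\dots,2g\}$, which is impossible.

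Recall from the proof of \Cref{second_theorem} that $I_0=\{1,\dots,g\}$, that $\overline{\alpha_i}=\alpha_{i+g}$ and hence $\pi_{i+g}=\overline{\pi_i}$ for $1\leqslant i\leqslant g$, that $g$ is even, and that $M_{\langle I_0\rangle}$ is exotic, so $\prod_{i\in I_0}\pi_i=q^{g/2}$ by the remark following \Cref{motive_decomposition}. First I would record that $F=\bQ(\pi)$ is a CM field: a number field generated by a Weil $q$-number is totally real or CM, and the totally real case means $\pi=\pm\sqrt q$, which by Honda--Tate forces $\dim A\leqslant 2$, contradicting $g\geqslant 4$. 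In particular $\pi$ has no real conjugate, i.e. $\pi_i\neq\overline{\pi_i}$ for all $i$.

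Now suppose $\{\pi_1,\dots,\pi_g\}\cap\{\pi_{g+1},\dots,\pi_{2g}\}\neq\emptyset$, say $\pi_a=\pi_b$ with $1\leqslant a\leqslant g<b\leqslant 2g$. Writing $b=c+g$ with $1\leqslant c\leqslant g$ gives $\pi_a=\overline{\pi_c}$, whence $\pi_a\pi_c=\pi_c\overline{\pi_c}=q$, the relation $x\bar x=q$ holding for every conjugate of the Weil number $\pi$. Since $\pi_a\neq\overline{\pi_a}$, we must have $a\neq c$, so $J:=I_0\setminus\{a,c\}$ has even cardinality $g-2\geqslant 2$. Then $\prod_{i\in J}\pi_i=q^{g/2}/(\pi_a\pi_c)=q^{(g-2)/2}$, a rational number; being $G$-invariant it equals $\prod_{i\in K}\pi_i$ for every $K\in\langle J\rangle$, so every Frobenius eigenvalue on $M_{\langle J\rangle}$ is $q^{\#J/2}$ and $M_{\langle J\rangle}$ is made of Tate classes. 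Moreover $J\subseteq\{1,\dots,g\}$ is non-empty, so no index of $J$ has its complex conjugate $i+g$ in $J$; thus $J$ is not stable under $i\mapsto i+g\bmod 2g$, and by \Cref{exoset_exomot} the summand $M_{\langle J\rangle}$ contains no Lefschetz class. Hence $M_{\langle J\rangle}$ is an exotic submotive of $\frk h^{g-2}(A)$, of rank $\#\langle J\rangle\leqslant 2$ since $A$ is mildly exotic (\Cref{meav}).

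Finally, \Cref{lemma_partition} applied to $J$ yields $\{1,\dots,2g\}=J\cup\tau J$, which is absurd because $\#(J\cup\tau J)\leqslant\#J+\#\tau J=2(g-2)=2g-4<2g$. This contradiction proves the lemma. The one step requiring care is the passage from ``the two sets meet'' to ``there is a pair $\{a,c\}\subseteq I_0$ with $a\neq c$ and $\pi_a\pi_c=q$'': this is exactly where the absence of real conjugates of $\pi$, hence the hypothesis $g\geqslant 4$, is used; everything else is bookkeeping with relations among the $\pi_i$ already established in \Cref{main_proof}.
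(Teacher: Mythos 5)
Your proof is correct and follows essentially the same route as the paper's: assume $\pi_i=\pi_{j+g}$, deduce $\pi_i\pi_j=q$, strip $\{i,j\}$ out of $I_0$ to manufacture a smaller exotic summand $M_{\langle J\rangle}$, and contradict \Cref{lemma_partition} on size grounds. The one place where you are more explicit than the paper is in ruling out $i=j$: the paper silently writes $I\setminus\{i,j\}$ as if $i\neq j$ were automatic, whereas you observe that $i=j$ would make $\pi_i$ real, hence $\pi=\pm\sqrt q$, which is incompatible with a geometrically simple abelian variety of dimension $g\geqslant 4$. That extra sentence is a genuine (small) improvement in rigor; otherwise the two arguments coincide.
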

	\begin{proof}
		Assume there are $1 \leqslant i, j \leqslant g$ such that $\pi_i = \pi_{j+g}$. Then we would have $\overline {\pi_j} = \pi_i$, hence $\pi_i \pi_j = q$. Then, the set $I' = \{1, \dots, g\}\setminus \{i, j\}$ would lead to an exotic $M_{\langle I' \rangle}$, which is impossible by \Cref{lemma_partition} because $I'$ is too small.
	\end{proof}
	\begin{proposition} \label{lemma_noncommutative}
		Let $A$ be an abelian variety with CM by $L$ as in \Cref{second_theorem}. Assume that $\End^0(A)$ is noncommutative. Then $\End^0(A)$ is a quaternionic algebra and $g/2$ is odd. Moreover $F = \bQ(\pi)$ contains a unique imaginary quadratic subfield $Q_1$, and the only exotic summand of $\mathfrak h(\mathcal A)$ is $\det_{Q_1}(\mathfrak h^1(\mathcal A))$. This gives a description of $F$ as $F = Q_1 \cdot R$ where $R$ is a totally real field of odd degree. Moreover, $L$ contains a real quadratic subfield $Q_2$ which is not in $F$, hence $L = B \cdot R$ where $B = Q_1 \cdot Q_2$ is an imaginary biquadratic field.
	\end{proposition}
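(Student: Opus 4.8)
The plan is to first pin down, inside $F=\bQ(\pi)$, the imaginary quadratic subfield responsible for the exotic summand, together with the norm relation it satisfies, and then to read off the quaternionic structure from Honda--Tate theory.

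I keep the notation of the proof of \Cref{second_theorem}: $I=\{1,\dots,g\}$ with $\overline{\alpha_i}=\alpha_{i+g}$, the exotic rank-$2$ summand $M_{\langle I\rangle}$ has $G$-orbit $\{I,\tau I\}$ with $\tau I=I^c$, and $\prod_{i\in I}\pi_i=q^{g/2}$; we are in the case $m=[L:F]\geqslant 2$. Put $Z:=\Stab_G(I)$, an index-$2$ subgroup, and $Q_1:=\tilde L^{Z}$. Since $\tau I=I^c\neq I$ we have $\tau\notin Z$, so $Q_1$ is imaginary; and if $\sigma$ fixes $\pi$ then $\pi_{\sigma(1)}=\pi_1$, which by \Cref{lemma_tech} forces $\sigma(1)\in I$, hence $\sigma\in Z$, so $\Gal(\tilde L/F)\subseteq Z$ and $Q_1\subseteq F$. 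Moreover $Z$ acts transitively on $I$ (its point-stabiliser $\Stab_G(1)$ has index $g=|I|$ in $Z$), so $\{\pi_i:i\in I\}$ is exactly the set of $Q_1$-conjugates of $\pi$, each with multiplicity $m$; comparing Frobenius eigenvalues as in \Cref{determinant_h1} identifies $M_{\langle I\rangle}$ with $\det_{Q_1}\frk h^1(A)$, and the Tate relation becomes
\[ N_{F/Q_1}(\pi)^{m}=q^{g/2}. \]
Running the same argument for an arbitrary exotic summand $M_{\langle I'\rangle}$ (so $|I'|=g$ by \Cref{lemma_partition}) shows every exotic submotive of $\frk h(A)$ is $\det_{Q_1'}\frk h^1(A)$ for an imaginary quadratic $Q_1'\subseteq F$ satisfying the analogous relation. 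Since $F$ is stable under $\pi\mapsto q/\pi$ and contains the non-real subfield $Q_1$, it is a CM field; writing $R:=F_0$ for its maximal totally real subfield, automatically $F=Q_1\cdot R$ with $[R:\bQ]=[F:\bQ]/2=g/m$.

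The heart of the matter is to prove $m=2$ and $g/2$ odd. Set $\beta:=N_{F/Q_1}(\pi)\in Q_1$; transitivity of norms and $\pi\overline\pi=q$ give $\beta\overline\beta=N_{F/\bQ}(\pi)=q^{g/m}$, while the displayed relation gives $\beta^{m}=q^{g/2}$, so $\beta/\overline\beta$ is a root of unity in $Q_1$ killed by $m$. I would then invoke the Honda--Tate description of $D:=\End^0(A)$: a central division algebra over the CM field $F$ with $[D:F]=m^2$ and $m$ the least common multiple of the denominators of its local invariants, which vanish away from $p$ and equal $\tfrac{v_w(\pi)}{v_w(q)}[F_w:\Qp]\bmod 1$ above $p$, with value in $\{0,\tfrac12\}$ at places fixed by complex conjugation. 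The relation $\beta^{m}=q^{g/2}$ forces $v_{w'}(\beta)$ to take the same value for all $w'\mid p$ on $Q_1$; translating this through $v_{w'}(\beta)=\sum_{w\mid w'}[F_w:Q_{1,w'}]\,v_w(\pi)$ shows that no invariant of $D$ can have denominator $\geqslant 3$ (such a denominator would require, at a split place of $Q_1$, a distribution of the $v_w(\pi)$ incompatible with their prescribed weighted average), whence $m=2$. With $m=2$ the relation reads $\beta^{2}=q^{g/2}=\beta\overline\beta$, so $\beta\in\bQ$; a short analysis then gives $g/2$ odd: if $g/2$ were even, $R=F_0$ would acquire a quadratic subfield, hence $F$ a second imaginary quadratic subfield, hence a second exotic summand, and combining the two norm relations produces an exotic summand of rank $>2$, against mild exoticness. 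I expect this step to be the main obstacle: extracting the bound $m\leqslant 2$ cleanly from the norm relation (and the cohabitation with the Weil relation at the self-conjugate places), and then securing the parity, is where the real work lies.

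Finally, $g/2$ odd forces $R=F_0$ to have no subfield of even degree, so $Q_1$ is the unique imaginary quadratic subfield of $F$; by the first part $\det_{Q_1}\frk h^1(A)$ is the unique exotic submotive, and $F=Q_1\cdot R$ with $R$ totally real of odd degree. For the last assertion, $L$ is a maximal subfield of the quaternion algebra $D$ with $[L:F]=2$ and $L$ a CM field, so its maximal totally real subfield $L_0$ satisfies $[L_0:R]=2$; writing $L_0=R(\sqrt\delta)$ with $\delta\in R$ totally positive, the fact that $L=F(\sqrt\delta)$ splits the quaternion algebra $D$, which carries a positive involution of the second kind over complex conjugation, forces $\delta$ into $\bQ$ up to squares in $R$, so $Q_2:=\bQ(\sqrt\delta)$ is a real quadratic field with $L_0=Q_2\cdot R$ and $L=(Q_1Q_2)\cdot R$. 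Since $Q_2$ is real while $F$ is CM with maximal real subfield $R\subsetneq L_0$, we get $Q_2\not\subseteq F$, and $B:=Q_1Q_2$ is imaginary biquadratic, which completes the proof.
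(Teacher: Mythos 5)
Your opening moves match the paper's: you fix $I=\{1,\dots,g\}$, form $Z=\Stab_G(I)$, identify $Q_1=\tilde L^Z$ as an imaginary quadratic subfield of $F$ using \Cref{lemma_tech}, and translate the Tate relation into $N_{F/Q_1}(\pi)^{m}=q^{g/2}$ as in \Cref{determinant_h1}. The final paragraph on the real quadratic $Q_2\subset L\setminus F$ is also in the right spirit. The problem is the central step, which you yourself flag as the obstacle.

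First, your argument for $m=2$ is not a proof. From $\beta^m=q^{g/2}$ and $\beta\overline\beta=q^{g/m}$ you only get that $\beta/\overline\beta$ is a root of unity killed by $m$; if $Q_1=\bQ(i)$ or $\bQ(\sqrt{-3})$ this does not preclude $m=4$ or $m=6$, and if $\beta/\overline\beta=1$ then both relations become $\beta^2=q^{g/m}$ with no contradiction for any $m$. The subsequent appeal to Honda--Tate invariants is only a sketch ("I would then invoke\dots shows that no invariant of $D$ can have denominator $\geqslant 3$"), and the inference you indicate -- that constancy of $v_{w'}(\beta)$ over $w'\mid p$ in $Q_1$ controls denominators of local invariants of $D$ over $F$ -- does not follow without a genuinely new argument; the invariants live over places of $F$, not $Q_1$, and a common valuation downstairs doesn't constrain the multiset of $v_w(\pi)$ upstairs. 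Second, your route to "$g/2$ odd" contains a false step: a totally real field of even degree $g/2$ need \emph{not} have a quadratic subfield (e.g.\ when its Galois closure has group $S_{g/2}$), so from "$g/2$ even" you cannot conclude that $F$ has a second imaginary quadratic subfield.

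The paper avoids all of this by staying combinatorial. It first observes that if $g'=g/m$ were even, then the relation $(\prod_{i=1}^{g'}\pi_i)^m=q^{g/2}$ would force an exotic summand supported on $I'=\{1,\dots,g'\}$ with $\#I'<g$, contradicting \Cref{lemma_partition}; hence $g'$ is odd. It then applies \Cref{lemma_big} to $J=\{1,\dots,g'\}$ to get $g'\geqslant g/2=g'm/2$, forcing $m=2$, and with $g'=g/2$ the parity follows. Uniqueness of the exotic summand and the biquadratic $B=Q_1\cdot Q_2$ are then extracted purely by Galois theory from the orbit structure of $W=\{I',I'',\overline{I'},\overline{I''}\}$ (\Cref{lemma_action2}, \Cref{lemma_exosets}), with no invocation of quaternion-algebra splitting or positive involutions. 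You should replace the Brauer-group heuristics with those orbit-counting lemmas, which is where the rank-$\leqslant 2$ hypothesis of \Cref{meav} is actually exploited.
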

	\begin{proof}
	Recall that $m=[L:F]$, by \cite[Theorem 2]{tat66} the non-commutativity hypothesis corresponds to $m \geqslant 2$. Let $g' = g/m$, up to renaming the $\sigma_i$'s, by \Cref{lemma_tech} we can assume without modifying $I_0=\{1, \dots, g\}$ that 
	$$\{\pi_1, \dots, \pi_{g'} \} = \{\pi_1, \dots, \pi_g\}.$$
	The relation $\prod_{i \in I_0} \pi_i^e = q^{er}$ is witnessing that $M_{\langle I_0 \rangle }$ contains potential Tate classes. Because we know the multiplicity of the $\pi_i$'s, we get:
	$$\left ( \prod_{i=1}^{g'} \pi_i^e \right )^m = q^{er}.$$
	If $g'$ is even, the relation $\prod_{i=1}^{g'} \pi_i^{em} = q^{er}$ expresses that $M_{\langle I' \rangle }$ is exotic for $I'= \{1, \dots, g'\}$. This is impossible by \Cref{lemma_partition} because $I'$ is too small, hence $g'$ must be odd. By taking an extension of degree $em$ of $k_0$ , we can assume $\prod_{i=1}^{g'} \pi_i = q^{r/m}$.
	
	 By applying \Cref{lemma_big} to $J = \{1, \dots, g'\}$, we get that $g' \geqslant g/2 = g' \frac{m}{2}$. Hence we have $m=2$ and $g/2 = g/m = g'$ is odd.
	 
	We are left to prove that there is a biquadratic subfield of $L$ and to describe the exotic submotives of $\mathfrak h(A)$. We will use Galois theory as in the proof of \Cref{lemma_commutative}. The Galois group $G= \Gal(\tilde L/\bQ)$ acts transitively on $\{1, \dots, 2g\}$. Let $$I' = \{1, \dots, g'\}, I'' = \{g'+1, \dots, g\}$$
	 and $i_1 \in I''$ such that $\pi_1 = \pi_{i_1}$. Then by \Cref{lemma_action2}, the action of $G$ on subsets stabilizes $W = \{I', I'', \overline{I'}, \overline{I''}\}$, and the action is transitive. Then we can consider the subgroups 
	 $$Z = \Fix(I'),\; Z_1 = \Stab(\{I', I''\}) = \Fix(I_0) \text{ and } Z_2 = \Stab(\{I', \overline{I'} \}).$$
	Because the action of $G$ on $W$ is transitive, $Z$ has index $4$ in $G$. Because the action of $Z_1$ on $\{I',I''\}$ is transitive, $Z$ has index $2$ in $Z_1$ and the same for $Z_2$. Hence by Galois theory, there correspond a subfield $B$ of degree $4$ of $\tilde L$ and two disctinct quadratic subfield $Q_1$ and $Q_2$ of $B$. Because $\tau \not \in Z_1$ and $\tau \in Z_2$, we have that $Q_1$ is imaginary and $Q_2$ is real. Moreover $B$ is generated by $Q_1$ and $Q_2$, hence is biquadratic. 
	
	Moreover $F = \bQ(\pi)$ corresponds to the subgroup $H_F=\Stab(\{\sigma_1, \sigma_{i_1}\})$, where $i_1$ is such that $\sigma_{i_1}(\pi) = \sigma_1(\pi)$. If $\sigma \in H_F$ then either $1\in\sigma(I')$ or $i_1 \in \sigma(I')$ depending on the value of $\sigma \circ \sigma_1$. As $G$ stabilizes $W$, we only have two possibilities: $\sigma I' = I'$ or $\sigma I' = I''$. We deduce that $H_F\subset Z_1$ and $Q_1$ is a subfield of $F$. Similarly, we prove that $B$ is a subfield of $L$ and this finishes the description of the fields $L$ and $F$. \\
	By construction, the $Q_1$ conjugates of $\sigma_1$ are $\sigma_1, \dots, \sigma_g$. Hence the relation saying that $M_{\langle I_0 \rangle}$ contains potential Tate classes becomes $N_{L/Q_1}(\pi) = q^{\frac{g}{2}}$. As in \Cref{determinant_h1}, this tells us that $M_{\langle I_0 \rangle}$ is $\det_{Q_1}(\mathfrak h^1(\mathcal A))$. \Cref{lemma_exosets} tells us that $M_{\langle I_0 \rangle}$ is the only exotic summand of $\mathfrak h(\mathcal A)$ and this concludes the proof.
	\end{proof}
	\begin{lemma} \label{lemma_big}
		If we are in the setting of \Cref{second_theorem} with $I_0 = \{1, \dots, g\}$. For all $J \subset I_0$ such that $\prod_{j \in J} \pi_j = q^{s/2}$ for some $s$, and for any $\sigma \in G$ such that $\sigma J \not = J$ and $\sigma J \subset I_0$, we have $\{1, \dots, g\} = J \cup \sigma J$. In particular if $J \not = \emptyset$ is such that $\prod_{j \in J} \pi_j = q^{s/2}$ then $\#J \geqslant g/2$.
	\end{lemma}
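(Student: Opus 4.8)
\emph{Overall plan.} I would deduce the ``in particular'' clause formally from the first assertion, so the real work is the first one. The plan there is: apply the Galois element $g$ to the relation $\prod_{i\in J}\pi_i=q^{s/2}$ and divide by the original, producing a relation supported on the disjoint nonempty sets $A:=J\setminus gJ$ and $B:=gJ\setminus J$, both contained in $I$; then use the identity $\pi_i\overline{\pi_i}=q$ to rewrite it as a relation exhibiting $M_{\langle A\sqcup\tau B\rangle}$ as an \emph{exotic} summand of the motive of $A$; then conclude with \Cref{lemma_partition}. Throughout, as in \Cref{second_theorem}, $I=\{1,\dots,g\}$ and complex conjugation $\tau$, which on indices is $i\mapsto i+g$ modulo $2g$, interchanges $I$ with its complement.

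\emph{First assertion.} Since $g$ is a bijection of $\{1,\dots,2g\}$ we have $\#A=\#B$; since $gJ\ne J$ both $A$ and $B$ are nonempty; and $A\subseteq J\subseteq I$, $B\subseteq gJ\subseteq I$ by hypothesis. Applying $g$ to $\prod_{i\in J}\pi_i=q^{s/2}$ gives $\prod_{i\in gJ}\pi_i=q^{s/2}$ — immediately when $q^{s/2}\in\bQ$, and in general up to a sign that I would remove by base changing $A$ along $\bF_q\hookrightarrow\bF_{q^2}$ (which alters neither the decomposition of \Cref{motive_decomposition} nor which of its summands are exotic). Cancelling the common factor $\prod_{i\in J\cap gJ}\pi_i$ leaves $\prod_{i\in A}\pi_i=\prod_{i\in B}\pi_i$, and multiplying by $\prod_{i\in\tau B}\pi_i$ — a product over a set disjoint from $A$, as $A\subseteq I$ and $\tau B\subseteq\tau I$ — and using $\pi_i\overline{\pi_i}=q$ on the right yields
\[\prod_{i\in A\sqcup\tau B}\pi_i=q^{\#B}=q^{\#(A\sqcup\tau B)/2}.\]
By the remark following \Cref{motive_decomposition} this says $M_{\langle A\sqcup\tau B\rangle}$ consists of Tate classes, and since $\tau(A\sqcup\tau B)=\tau A\sqcup B\ne A\sqcup\tau B$ (because $A$ and $B$ are disjoint and nonempty), the set $A\sqcup\tau B$ is not $\tau$-stable, so $M_{\langle A\sqcup\tau B\rangle}$ carries no Lefschetz class by \Cref{exoset_exomot}: it is exotic. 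Now \Cref{lemma_partition} gives
\[\{1,\dots,2g\}=(A\sqcup\tau B)\cup\tau(A\sqcup\tau B)=(A\cup B)\sqcup\tau(A\cup B),\]
and counting elements (using $A\cup B\subseteq I$, $\tau(A\cup B)\subseteq\tau I$, $I\cap\tau I=\emptyset$) forces $A\cup B=I$. Since $A\cup B=(J\setminus gJ)\cup(gJ\setminus J)\subseteq J\cup gJ\subseteq I$, this gives $J\cup gJ=\{1,\dots,g\}$, as claimed.

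\emph{The ``in particular'' clause.} If $J=I$ then $\#J=g\geqslant g/2$. Otherwise $\emptyset\ne J\subsetneq I$; fix $j\in J$ and $\ell\in I\setminus J$. In the setting of \Cref{second_theorem} the summand $M_{\langle I\rangle}$ is exotic of rank $2$, so the $G$-orbit of $I$ is exactly $\{I,\tau I\}$ and $I\cap\tau I=\emptyset$; consequently $\mathrm{Stab}_G(I)$ acts transitively on $I$, because any $h\in G$ carrying some element of $I$ into $I$ must have $hI=I$ rather than $hI=\tau I$. Pick $\sigma\in\mathrm{Stab}_G(I)$ with $\sigma(j)=\ell$; then $\sigma J\subseteq I$ and $\ell\in\sigma J\setminus J$, so $\sigma J\ne J$. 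Applying the first assertion to $\sigma$ gives $I=J\cup\sigma J$, whence $g=\#I\leqslant\#J+\#\sigma J=2\#J$, i.e. $\#J\geqslant g/2$.

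\emph{Expected main obstacle.} Granting \Cref{lemma_partition}, the argument is essentially combinatorics of the $G$-action on subsets of $\{1,\dots,2g\}$; the one delicate point is the legitimacy of applying $g$ to $\prod_{i\in J}\pi_i=q^{s/2}$ (that is, the $G$-invariance of the right-hand side), which is why a harmless quadratic extension of the base field is inserted — and in the use made of this lemma in \Cref{lemma_noncommutative} even that is unnecessary, the right-hand side there being the rational integer $q^{g/2}$. I anticipate no deeper difficulty than keeping the finite-set bookkeeping straight and invoking \Cref{lemma_partition} for the correct subset.
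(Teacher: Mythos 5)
Your proposal follows the same route as the paper's proof: form the quotient of $\prod_{i\in J}\pi_i=q^{s/2}$ by its $g$-translate, rewrite the resulting relation as one supported on the set
\[S'=(J\setminus gJ)\,\cup\,\tau(gJ\setminus J),\]
observe (via \Cref{exoset_exomot}) that $M_{\langle S'\rangle}$ is an exotic summand, invoke \Cref{lemma_partition}, and intersect with $I=\{1,\dots,g\}$ to conclude $J\cup gJ=I$. The bookkeeping is identical (your $A\sqcup\tau B$ is the paper's $S'$), and your computation of the exponent agrees with the paper's $q^{s-\#(J\cap gJ)}$ once one notes $s=\#J$ so that this is $q^{\#B}$. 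Two small points where you go beyond the paper's written proof, both to its credit: you flag the sign ambiguity in applying $g$ to $q^{s/2}$ when $\sqrt{q}\notin\bQ$ and resolve it by a harmless quadratic base change (the paper silently writes $g\cdot\overline{q^{s/2}}=q^{s/2}$), and you actually prove the ``in particular'' clause — producing a suitable $\sigma\in\Fix_G(I)$ by exploiting that $\Fix_G(I)$ acts transitively on $I$ when the orbit of $I$ is $\{I,\tau I\}$ with $I\cap\tau I=\emptyset$ — whereas the paper's proof establishes only the first assertion and leaves this deduction implicit. Both additions are correct and do not change the substance of the argument.
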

	\begin{proof}
		Because $\sigma J \not = J$, we have that $J$ is not empty. We deduce that $\sigma J \subset I_0 \cap \sigma I_0$ is also non-empty, hence $\sigma I_0 \not = \overline I_0$ and $\sigma I_0 = I_0$. Furthermore, we have $J \cap \overline {\sigma J} = \emptyset$, so that $S = J \cup \overline {\sigma J}$ is of cardinal $2\#J$. We can also remove the duplicates: 
		$$S' = \left (J \setminus \sigma J \right ) \cup \left (\overline {\sigma J} \setminus \overline J \right ).$$
		The set $S'$ is non-empty because $J\not = \sigma J$ and of cardinal $2\#J - 2 \# (J\cap \sigma J)$. Moreover, we have:
		\begin{align*}
			\prod_{i\in S'} \pi_i &= \prod_{i \in J\setminus \sigma J} \pi_i \prod_{i \in \overline {\sigma J}\setminus \overline J} \pi_i \\
			&=\prod_{i \in J} \pi_i \left (\prod_{i \in J \cap \sigma J} \pi_i \right )^{-1} \prod_{i \in \overline {\sigma J}} \pi_i \left ( \prod_{i \in \overline {\sigma J} \cap \overline J} \pi_i \right )^{-1} \\
			&= q^{s/2} \left (\prod_{i \in J \cap \sigma J} \pi_i \right )^{-1} (\sigma \cdot \overline {q^{s/2}}) \left ( \prod_{i \in \sigma J \cap J} \overline {\pi_i } \right )^{-1} \\
			&= q^s \left ( \prod_{i \in \sigma J \cap J} \pi_i \overline {\pi_i } \right )^{-1} \\
			&= q^{s - \#(J \cap \sigma J)}.
		\end{align*}
		As $S' \not = \emptyset$, $M_{\langle S' \rangle}$ is a summand of $\mathfrak h^{2s - 2\#(J \cap gJ)}(\mathcal A)$ which contains exotic Tate classes, and by \Cref{lemma_partition} this possible only if $\{1, \dots, 2g\} = S' \cup \overline{S'}$, which implies $\{1, \dots, 2g\} = S \cup \overline{S}$ because $S' \subset S$. So that we have
		$$\{1, \dots, 2g\} = J \cup gJ \cup \overline{J} \cup \overline {gJ}.$$
		As $\{1, \dots, g\}$ is disjoint from $\overline J$ and $\overline {gJ}$, by intersecting the previous equality with $\{1, \dots, g\}$, we get $\{1, \dots, g\} = J \cup \sigma J$.
	\end{proof}
	\begin{lemma} \label{lemma_action2}
		In the setting of \Cref{lemma_noncommutative}, let $I' = \{1, \dots, g'\}$ and $I'' = \{g'+1, \dots, g\}$. Then $G=\Gal(\tilde L/\bQ)$ stabilizes $W = \{I', I'', \overline{I'}, \overline{I''}\}$.
	\end{lemma}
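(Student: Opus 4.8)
The plan is to deduce the statement — that $G$ stabilizes $W = \{I', I'', \overline{I'}, \overline{I''}\}$ — from the single relation $\prod_{i \in I'} \pi_i = q^{g'/2}$ established (after the finite extension of $k$ performed in the proof of \Cref{lemma_noncommutative}), together with \Cref{lemma_big} and the fact, recalled in the proof of \Cref{second_theorem}, that the $G$-orbit of $I = \{1, \dots, g\}$ is exactly $\{I, \overline I\}$.

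First I would record two preliminary observations. Write $\iota \in G$ for complex conjugation; since $\tilde L$, being the Galois closure of the CM-field $L$, is again a CM-field, $\iota$ is central in $G = \Gal(\tilde L/\bQ)$, and it evidently permutes $W$ (it interchanges $I' \leftrightarrow \overline{I'}$ and $I'' \leftrightarrow \overline{I''}$). Secondly, the extension of $k$ used in \Cref{lemma_noncommutative} may be taken so that $q$ becomes a perfect square, hence $q^{g'/2} \in \bQ$ and is fixed by $G$; consequently $\prod_{i \in \tau I'} \pi_i = \tau\big(q^{g'/2}\big) = q^{g'/2}$ for every $\tau \in G$, and the same relation holds with $I''$ in place of $I'$ because, by \Cref{lemma_tech} and the renaming carried out in the proof of \Cref{lemma_noncommutative}, the $\pi_i$ with $i \in I''$ are a permutation of the $\pi_i$ with $i \in I'$.

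The heart of the argument is the claim: if $\tau \in G$ fixes $I$ then $\tau W = W$. Since $\tau I' \subseteq \tau I = I$, either $\tau I' = I'$, in which case $\tau I'' = \tau(I \setminus I') = I \setminus I' = I''$; or $\tau I' \neq I'$, in which case $I'$ meets the hypothesis of \Cref{lemma_big} (with $s = g'$) and $\tau$ is a Galois element with $\tau I' \neq I'$ and $\tau I' \subseteq I$, so \Cref{lemma_big} yields $\{1, \dots, g\} = I' \cup \tau I'$; comparing cardinalities ($\#I' = \#\tau I' = g/2$) forces this union to be disjoint, so $\tau I' = I''$ and $\tau I'' = I \setminus \tau I' = I'$. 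Either way $\{\tau I', \tau I''\} = \{I', I''\}$, and applying the centrality of $\iota$ gives $\{\tau\overline{I'}, \tau\overline{I''}\} = \iota\{\tau I', \tau I''\} = \{\overline{I'}, \overline{I''}\}$; hence $\tau W = W$. For an arbitrary $\tau \in G$ one then notes that $\tau I \in \{I, \overline I\}$: if $\tau I = I$ we are done by the claim, while if $\tau I = \overline I$ then $\iota\tau$ fixes $I$, so $(\iota\tau)W = W$ by the claim and therefore $\tau W = \iota\big((\iota\tau)W\big) = \iota W = W$.

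I do not expect a serious obstacle here: once \Cref{lemma_big} is in hand the argument is a cardinality count plus the centrality of complex conjugation. The only points needing care are bookkeeping ones — confirming that the extension of $k$ in \Cref{lemma_noncommutative} indeed renders $q^{g'/2}$ rational (removing any sign ambiguity in $\tau(q^{g'/2})$ and matching the exact hypothesis of \Cref{lemma_big}), and checking that $I''$ carries the same relation as $I'$ so that the claim is symmetric in the two — both of which are handled by the preliminary observations above.
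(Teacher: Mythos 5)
Your argument is correct and follows essentially the same route as the paper: fix an auxiliary set $I'$, invoke \Cref{lemma_big} for a Galois element with $\tau I = I$ and $\tau I' \neq I'$, and conclude $\tau I' = I''$ by the cardinality count; the paper then says ``the same technique works with the other elements of $W$'' where you instead deduce the cases $\overline{I'},\overline{I''}$ via centrality of complex conjugation, a cosmetic streamlining. Your preliminary worry about $q^{g'/2}$ is unnecessary (after the degree-$m$ extension already performed in \Cref{lemma_noncommutative} the quantity equals $q^{g}$ in the original normalization, an integer fixed by $G$), but it causes no harm.
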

	\begin{proof}
		Recall that $I_0 = \{1, \dots, g\}$, let $J \in W$, we can assume $J =I'$ or $J = I''$. Let $\sigma \in G$, we have either $\sigma I_0 = I_0$ or $(\tau \sigma )I_0 = I_0$. By changing $\sigma$ by $\tau \sigma$ if necessary, we can assume that $\sigma I_0 = I_0$. \\
		If $\sigma J = J$ then there is nothing to prove. Otherwise $J \subset I$ and $J \not = \sigma J$, hence we can apply \Cref{lemma_big} to deduce that $\{1, \dots g\} = J \cup \sigma J$. Because $J$ and $\sigma J$ are of cardinal $g/2$, we deduce that $J$ and $\sigma J$ are disjoint. Hence $\sigma J$ is the complementary of $J$ in $\{1, \dots, g\}$, that is to say $\sigma J \in \{I', I''\}$.
	\end{proof}
	\begin{lemma} \label{lemma_exosets}
		In the setting of \Cref{lemma_noncommutative} with $I_0 = \{1, \dots g\}$, let $J \subset \{1, \dots, 2g\}$ be a subset of even size $s \leqslant g$ such that $M_{\langle J \rangle}$ contains exotic Tate classes, then $J = I_0$ or $J = \overline I_0$.
	\end{lemma}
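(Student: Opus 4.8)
The plan is to first force $J$ to have full size $g$, then translate the Tate relation defining $M_{\langle J\rangle}$ into a relation among a subset of $I$, and finally recognise that relation as forcing $J\in\{I,\overline I\}$. Throughout, $\tau$ denotes the complex conjugation; on the index set $\{1,\dots,2g\}$ it acts by $i\mapsto i+g\bmod 2g$, and $I=\{1,\dots,g\}$ is a transversal for its orbits, so $I\sqcup\tau I=\{1,\dots,2g\}$. Since $M_{\langle J\rangle}$ contains exotic Tate classes, \Cref{exoset_exomot} shows it contains no Lefschetz class, hence $M_{\langle J\rangle}$ is exotic; \Cref{lemma_partition} then gives $J\cup\tau J=\{1,\dots,2g\}$. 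As $|\tau J|=|J|=s\leqslant g$, this forces $s=g$ and $J\cap\tau J=\emptyset$, so $J$ is a transversal for the pairs $\{i,i+g\}$, and (the relation $\prod_{j\in J}\pi_j=q^{g/2}$ being Galois-stable) the fact that $M_{\langle J\rangle}$ consists of Tate classes means exactly $\prod_{j\in J}\pi_j=q^{g/2}$.

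Now recall from \Cref{lemma_noncommutative} and \Cref{determinant_h1} that $M_{\langle I\rangle}=\det_{Q_1}(\frk h^1(\mathcal A))$ contains Tate classes, i.e. $\prod_{i\in I}\pi_i=q^{g/2}$. Put $C=I\cap J$. Since $I$ and $J$ are both transversals for the pairs $\{i,i+g\}$, one checks that $J=C\sqcup\tau(I\setminus C)$, and hence, using $\pi_{i+g}=\overline{\pi_i}$ and $\pi_i\overline{\pi_i}=q$,
\[
q^{g/2}=\prod_{j\in J}\pi_j=\Bigl(\prod_{i\in C}\pi_i\Bigr)\Bigl(\prod_{i\in I\setminus C}\overline{\pi_i}\Bigr)=q^{\,g/2-|C|}\Bigl(\prod_{i\in C}\pi_i\Bigr)^{2}.
\]
Thus $\gamma:=\prod_{i\in C}\pi_i$ satisfies $\gamma^{2}=q^{|C|}$; since also $\gamma\overline\gamma=q^{|C|}$, we get $\gamma=\overline\gamma$, so $\gamma=\pm q^{|C|/2}$. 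It now suffices to prove $C=\emptyset$ or $C=I$, for $C=\emptyset$ gives $J=\tau I=\overline I$ and $C=I$ gives $J=I$.

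If $|C|$ is even and $\gamma=q^{|C|/2}$, this is immediate: assuming $C\neq\emptyset$, the summand $M_{\langle C\rangle}$ of $\frk h^{|C|}(\mathcal A)$ then contains a Tate class, and $C\subseteq I$ forces $C\cap\tau C\subseteq I\cap\tau I=\emptyset$, so $C$ is not $\tau$-stable and $M_{\langle C\rangle}$ is exotic by \Cref{exoset_exomot}; \Cref{lemma_partition} gives $C\cup\tau C=\{1,\dots,2g\}$, whence $|C|\geqslant g$ and so $C=I$. The remaining cases — $\gamma=-q^{|C|/2}$, or $|C|$ odd (so that $\prod_{i\in C}\pi_i^{2}=q^{|C|}$ but $C$ carries no Tate class for $A$) — are where the real work lies. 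I would dispatch them by the Galois-transport argument used for the case $2k=g$ at the end of the proof of \Cref{lemma_relations}: combine the relation $\prod_{i\in C}\pi_i^{2}=q^{|C|}$ with the corresponding one for $I\setminus C$ (note $\prod_{i\in I\setminus C}\pi_i=q^{g/2}/\gamma$), transport these relations by suitable elements of $G$ using the explicit description of the Galois action (\Cref{gal_computation_2}, \Cref{lemma_action2}) and the stabilisers identified in \Cref{lemma_noncommutative}, and feed the resulting $\pi$- and $\pi^{2}$-relations into \Cref{lemma_big}; as in \Cref{lemma_noncommutative}, the hypothesis that $g'=g/2$ is odd is exactly what prevents an intermediate set of size $g/2$ from supporting exotic Tate classes. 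Handling these sign and parity cases is the main obstacle; everything else in the argument is formal.
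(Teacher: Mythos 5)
Your overall strategy is genuinely different from the paper's. You reduce everything to the single set $C=I\cap J$ and to the relation $\gamma^{2}=q^{|C|}$ with $\gamma=\prod_{i\in C}\pi_i$, whereas the paper refines the analysis along the four-block partition $W=\{I',I'',\overline{I'},\overline{I''}\}$ (with $I'=\{1,\dots,g'\}$, $I''=\{g'+1,\dots,g\}$), proves that $J$ meets each block of $W$ in $\emptyset$ or the whole block, and deduces the conclusion from the list of remaining possibilities. The one case you actually close (even $|C|$, $\gamma=+q^{|C|/2}$) is correct: since $C\subseteq I$ and $I\cap\tau I=\emptyset$, a nonempty $C$ is not $\tau$-stable, so $M_{\langle C\rangle}$ is exotic by \Cref{exoset_exomot}, and \Cref{lemma_partition} forces $C=I$.

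But the cases you confess to leaving open are precisely where the content of the lemma lies, and the plan you sketch for them does not go through. You propose to imitate the Galois-transport step at the end of \Cref{lemma_relations}, but that step works because there $\Gal(\tilde L/\bQ)\cong\mu_2\times S_g$ and one can move a single index by a transposition $(i\;j)$. In the noncommutative setting of \Cref{lemma_noncommutative} the field is $L=Q_1\cdot Q_2\cdot R$ and $\Gal(\tilde L/\bQ)$ is forced to stabilize the partition $W$ (\Cref{lemma_action2}); it does not contain the transpositions that would relocate one index of $C$ without moving the rest of its block, so the quotient-of-relations trick collapses. Moreover, here the Frobenius eigenvalues $\pi_i$ occur with multiplicity $m=2$ (each $\pi_i$ for $i\in I'$ equals $\pi_{i_1}$ for some $i_1\in I''$), so a relation of the type $\prod_{i\in C}\pi_i^{2}=q^{|C|}$ does not translate directly into a Tate relation for some intermediate index set: you must track \emph{which} index each repeated eigenvalue sits on. The paper's $W$-partition argument is tailored exactly to these constraints, using that $\#(gJ\cap S)+\#(\overline J\cap S)=g/2=g'$ is odd for each block $S\in W$; your set $C$ ignores the $I'$/$I''$ refinement, which is the combinatorial structure the Galois group actually preserves. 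As it stands the proposal is incomplete and the missing cases will not yield to the stated plan; realign the case analysis with $W$ from the outset.
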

	\begin{proof}
		Let $J$ be as in the theorem, we have by \Cref{lemma_partition} that $J \cup \overline J = \{1, \dots, 2g\}$ and $\#J = g$. We need to understand $J$ with respect to the partition defined in the proof of \Cref{lemma_noncommutative}:
		$$\{1, \dots, 2g\} = I' \cup I'' \cup \overline {I'} \cup \overline{I''}. $$
		First, we will show that $(I'\cup \overline{I'})\cap J$ and $(I''\cup \overline{I''})\cap J$ are of cardinal $g/2$. Indeed, as $J \cup \overline J = \{1, \dots, 2g \}$ we get by intersecting with $I'$ that $(J \cap I') \cup (\overline J \cap I')$ is of cardinal $g/2$. Hence we get
		\begin{align*}
		g/2 &\leqslant \#(J \cap I') + \# (\overline J \cap {I'}) \\
		& \leqslant \#(J \cap I') + \# ( J \cap \overline {I'}) \\
		&= \# \left [(J \cap I')\cup ( J \cap \overline {I'}) \right ]\\
		&= \# \left [J \cap (I'\cup \overline {I'}) \right ]
		\end{align*}
		and the same inequality holds for $I''$ instead of $I'$. As $J = \left [J \cap (I'\cup \overline {I'}) \right ] \cup \left [J \cap (I''\cup \overline {I''}) \right ]$ is of cardinal $g$, we get that the previous inequalities are in fact \textit{equalities}. Moreover, if $S \in W$ is one of the set of the partition, then $S\cap J$ is either $\emptyset$ or $S$. Indeed if it is non-empty and not $S$, then we will construct many conjugates of $J$, which will contradict the second condition of \Cref{meav} for $M_{\langle J \rangle}$.\\
	To do so, consider $i,j \in S$ such that $i \not \in J$ and $j \in J$. Then by transitivity of the action of $G$ on $\{1, \dots, 2g\}$, there exists $\sigma$ such that $\sigma \cdot j = i$. Hence by \Cref{lemma_action2} we get that $\sigma S = S$. Hence we have $\sigma J \not = J$ because $i \in \sigma J$ but $i \not \in J$. Now if we prove that $\sigma J \not = \overline J$, then $J$ would have too much conjugates, so that $M_{\langle J \rangle}$ could not satisfy the second condition of \Cref{meav}. We will do so by proving that the sizes of $\sigma J \cap S$ and $\overline J \cap S$ have different parities, because $\#(\sigma J \cap S) + \# (\overline J \cap S)$ is an odd number:
	\begin{align*}
		\#(\sigma J \cap S) + \# (\overline J \cap S) &= \#(\sigma J \cap \sigma S) + \#(\overline J \cap \overline {\overline S}) \\
		&= \#(J \cap S) + \# (J \cap \overline S) \\
		&= \# \left [ (J \cap S ) \cup (J \cap \overline S) \right ] \\
		&= \# \left [ J \cap (S \cup \overline S) \right ]\\
		&= g/2.
	\end{align*}
	Hence $S \cap J$ is either $S$ or $\emptyset$ for each $S \in W$, so that for size reasons, there are two elements $S$ of $W$ such that the intersection is empty and two such that the intersection is $S$. These two elements can be: $I'$ and $I''$ in which case $J$ is $I_0$, or it can be $\overline {I'}$ and $\overline {I''}$ in which case $J$ is $\overline I_0$. They cannot be $I'$ and $\overline {I'}$ because then $J = J \cap (I' \cup \overline {I'})$ would not be of size $g/2$, and the same for $I''$. The last possibilities are $I'$ and $\overline {I''}$, then $J = I' \cup \overline {I''}$, which is impossible because the Tate classes in $M_{\langle J \rangle }$ would be Lefschetz classes: $\{\pi_i|\; i\in \overline{I''}\}$ is the set of complex conjugates of $\{\pi_1, \dots, \pi_{g'}\}$. Moreover, the same argument works for $J = I'' \cup \overline{I'}$.
	\end{proof}

\section{External tensor products and Tate classes} \label{relation-representation}
In this section, we introduce a tool to describe multiplicative relations between the Frobenius eigenvalues, some of which give rise to Tate classes by \Cref{rmk-rank-eigvalues}. Following the insight of Girstmair \cite{gir99}, we will use that the space of relations is a Galois module. More precisely, let $L$ be a CM field of degree $2g$ and $\pi \in L$ be a Weil $q$-number. We consider $\tilde L$ the Galois closure of $L$ and we enumerate the set $\Hom(L, \tilde L) = \{\sigma_1, \dots, \sigma_{2g}\}.$ For $1 \leqslant i \leqslant 2g$, let $\pi_i = \sigma_i(\pi)$, these are the conjugates of $\pi$ in $\tilde L$. 

\begin{definition}
Denote by $\bZ[\Hom(L, \tilde L)]$ or $\bZ[\sigma_1, \dots, \sigma_{2g}]$ the free abelian group on $\Hom(L, \tilde L)$.
Consider the map $\text{ev}_\pi$:
\begin{align*}
	\ev_\pi: \bZ [\sigma_1, \dots \sigma_{2g}] &\to \tilde L^{\times}/(p) \\
	a_1 [\sigma_1] + \dots + a_{2g} [\sigma_{2g}] &\mapsto \pi_1^{a_1} \dots {\pi_{2g}}^{a_{2g}} \pmod p .
\end{align*}
The space of multiplicative relations $K_\pi$ between the $\pi_i$'s is defined as the kernel of $\ev_\pi$.
\end{definition}
\begin{remark}
	Let $\Gamma = \Gal(\tilde L/\bQ)$, both $\bZ [\sigma_1, \dots \sigma_{2g}]$ and $\tilde L^{\times}/(p)$ are $\Gamma$-modules. Moreover, $\ev_\pi$ is $\Gamma$-equivariant, hence $K_\pi$ is a subrepresentation of the permutation representation $\bZ [\sigma_1, \dots \sigma_{2g}]$. This is the point of view of Girstmair in \cite{gir99}. Then the next input is to put some conditions on $L$, so that $\Gamma = \mu_2 \times G \times G'$ and that $\bQ[\sigma_1, \dots \sigma_{2g}]$ becomes an external tensor product of representations of $\mu_2$, $G$ and $G'$ (c.f. \Cref{def_boxtimes}). This will allow us to describe the possible $\Gamma$-submodules of $\bQ [\sigma_1, \dots \sigma_{2g}]$, hence the possible values for $K_\pi \otimes \bQ$.
\end{remark}
\begin{definition} \label{norm_embedding}
	Let $L$ be a number field, $\tilde L$ be a Galois closure of $L$. For $E$ be a subfield of $L$ and $\sigma \in \Hom(L, \tilde L)$, we denote by $N_{L/E}(\sigma)$ the following element of $\bZ[\Hom(L, \tilde L)]$:
	\[\sum_{\substack{\nu: L \to \tilde L \\ \nu_{|E}=\sigma_{|E}}} [\nu]. \]
	Moreover, this construction extends uniquely to a map
	\[ N_{L/E} : \bZ[\Hom(L, \tilde L)] \to \bZ[\Hom(L, \tilde L)].\]
\end{definition}
\begin{lemma} \label{lemma_norm}
	For $x \in L$, $E$ a subfield $L$ and $\sigma: L \to \tilde L$ any embedding, we have
	\[\sigma(N_{L/E}(x)) = \ev_{x}(N_{L/E}(\sigma)). \]
\end{lemma}
This lemma is a reformulation in this context of the fact that the norm of an element is the product of its conjugates.
\begin{definition} \label{def_boxtimes}
	Let $k$ be a field of characteristic $0$, $G$ and $G'$ be finite groups, $V$ a $k$-linear representation of $G$ and $V'$ a $k$-linear representation of $G'$. The external tensor product $V \boxtimes_k V'$ of $V$ and $V'$ is the $k$-vector space $V \otimes_k V'$ endowed with the following $k$-linear action of $G \times G'$:
	\[(g,g')\cdot (v \otimes v') := (gv) \otimes (g'v'). \]
\end{definition}
\begin{proposition} \label{irredtensor}
	Let $G$ and $G'$ be finite groups, $V$ a complex representation of $G$ and $V'$ a complex representation of $G'$. Assume that $V$ and $V'$ are irreducible finite dimensional representations of $G$ and $G'$. Then $V \boxtimes_{\bC} V'$ is an irreducible representation of $G \times G'$.
\end{proposition}
\Cref{irredtensor} is \cite[Theorem 10]{ser77}.
\begin{theorem} \label{prop_relation}
Let $Q$ be a quadratic imaginary field, $R$ and $R'$ be totally real number fields of degree $m$ and $g'$. Let $\tilde R$, $\tilde {R'}$ be Galois closures of $R$, $R'$ and
\begin{align*}
	G &= \Gal(\tilde R/\bQ), \\
	G' &= \Gal(\tilde{R'}/\bQ).
\end{align*}
Let $L$ be a compositum of $Q$, $R$ and $R'$, and $\pi \in L$ be a Weil $q$-number. Assume moreover that
\begin{itemize}
\item $\tilde R$ and $\tilde{R'}$ are linearly disjoint.
\item $N_{L/QR}(\pi) = q^{g'/2}$.
\item No powers of $N_{L/{QR'}}(\pi)$ are in $\bQ$.
\item $G$ acts $2$-transitively on $\Hom(R, \tilde R)$ (or $G = \{e\}$).
\item $\deg( R') \geqslant 2$ and the action of $G'$ on $\Hom(R', \tilde R')$ is $2$-transitive.
\end{itemize}
Then either for all $k$, $\pi^k \not \in Q R'$ and the rational space $K_\pi \otimes_{\bZ} \bQ$ of relations between the $\pi_i$'s is generated by 
\[\left \{N_{L/QR}(\sigma), [\sigma] +[\overline \sigma]  \; |\; \sigma \in \Hom(L, \tilde L) \right \}.\]
Or there is some $k$ such that $\pi^k \in Q R'$, and the rational space of relations is generated by 
\[\left \{N_{L/QR}(\sigma), [\sigma] +[\overline \sigma], \; [\mu \sigma]  - [\sigma] \; |\; \sigma \in \Hom(L, \tilde L), \mu \in \Gal(\tilde L/ Q\tilde R') \right \} .\]
\end{theorem}
\begin{remark}
	By \Cref{prop_relation} applied with $R=\bQ$ and \Cref{exoset_exomot}, we can recover \Cref{lemma_relations}.
\end{remark}
\begin{proof}[Proof of \Cref{prop_relation}]
	It follows from $N_{L/QR}(\pi) = q^{g'/2}$ and $\pi \overline \pi = q$ that the given generators are indeed elements of $K_\pi \otimes \bQ$ in the first case. In the second case, we have in addition that $\pi^k \in Q R'$. Hence $\mu(\sigma(\pi^k)) = \sigma(\pi^k)$ for all $\sigma \in \Hom(L, \tilde L)$ and $\mu \in \Gal(\tilde L/ Q \tilde R')$, so that $k \left ( [\mu \sigma] - [\sigma] \right ) \in K_\pi$. We are left to show that these sets indeed generate $K_\pi \otimes \bQ$.

	We have that $K_\pi\otimes \bQ$ is a subrepresentation of the permutation representation
        \[P = \bQ[\Hom(L, \tilde L)].\]
        Hence to understand $K_\pi \otimes \bQ$, it suffices to understand the decomposition of $P$ as a sum of simple factors, and to find which factors appear in $K_\pi$. By \Cref{lemma-permutation-box}, $P$ is an external tensor product, and by \Cref{irredtensor}, external tensor product of irreducible representations are again irreducible. Hence understand irreducible subrepresentations of $P$, it suffices to study irreducible subrepresentations of each factor independently. This study is done in \Cref{decomp-perm}, \Cref{decomp-pm}, \Cref{decomp-sm} and \Cref{lemma-case-disj}. Here are the conclusion:
        \begin{itemize}
        \item $P$ decomposes as $P = P_+ \oplus S_+ \oplus T_+ \oplus T_-$,
        \item $P_+$ is generated by $\big \{[\sigma] +[\overline \sigma]  \; |\; \sigma \in \Hom(L, \tilde L) \big \}.$
        \item According to \Cref{lemma-case-disj}, $P_+ \oplus S_+$ is generated by
          \[\left  \{N_{L/QR}(\sigma), [\sigma] + [\overline \sigma] \; |\; \sigma \in \Hom(L, \tilde L) \right \}, \]
          and $P_+ \oplus S_+ \oplus T_-$ is generated by
          \[\left \{N_{L/QR}(\sigma), [\sigma] +[\overline \sigma], \; [\mu \sigma]  - [\sigma] \; |\; \sigma \in \Hom(L, \tilde L), \mu \in \Gal(\tilde L/ Q\tilde R') \right \} .\]
        \item $P_+ \oplus S_+ \subset K_\pi \otimes \bQ$ (\Cref{decomp-perm} and \Cref{decomp-pm}),
        \item $\big ( K_\pi \otimes \bQ \big ) \cap T_+ = 0$ (\Cref{decomp-sm}),
        \item By \Cref{lemma-case-disj}, either no power of $\pi$ is in $QR'$ and $\big ( K_\pi \otimes \bQ \big ) \cap T_- = 0$,

          or some power of $\pi$ is in $QR'$ and $T_- \subset K_\pi \otimes \bQ$.
        \end{itemize}
        
        To conclude, either no power of $\pi$ is in $QR'$ and $K_\pi \otimes \bQ$ is the subrepresentation of $P$ satisfying
        \[ K_\pi\otimes \bQ = P_+ \oplus Q_+, \]
        which is generated by
        \[\left \{N_{L/QR}(\sigma), [\sigma] +[\overline \sigma]  \; |\; \sigma \in \Hom(L, \tilde L) \right \}.\]
        Or some power of $\pi$ is in $QR'$, and 
        \[ K_\pi \otimes \bQ = P_+ \oplus Q_+ \oplus T_-, \]
        which is generated by 
        \[\left \{N_{L/QR}(\sigma), [\sigma] +[\overline \sigma], \; [\mu \sigma]  - [\sigma] \; |\; \sigma \in \Hom(L, \tilde L), \mu \in \Gal(\tilde L/ Q\tilde R') \right \} .\]
      \end{proof}
	
	\begin{lemma} \label{lemma-permutation-box}
          Let $Q$, $R$, $R'$ be fields as in \Cref{prop_relation}, and $P = \bQ[\Hom(L, \tilde L)]$ be the permutation representation of $\Gal(\tilde L/ \bQ)$. Then
          \begin{align*}
            \Gal(\tilde L/ \bQ) &\cong \Gal(Q/\bQ) \times \Gal(\tilde R/ \bQ) \times \Gal(\tilde R'/\bQ), \\
            P &\cong \bQ[\Hom(Q, Q)] \boxtimes \bQ[\Hom(R, \tilde R)] \boxtimes \bQ[\Hom(R', \tilde R')].
          \end{align*}
	\end{lemma}
	\begin{proof}
	As $Q$ is imaginary and by the disjointness assumptions, we have 
	\begin{align*}
		L &= Q \otimes R \otimes R', \\
		\tilde L &= Q \otimes \tilde R \otimes \tilde R'.
	\end{align*}
	Tensor products are colimit in the category of $\bQ$-algebras, hence
	\begin{align*}
		\Hom(L, \tilde L) &=  \Hom(Q,Q) \times \Hom(R, \tilde R) \times \Hom(R', \tilde{R}'), \\
		\Gal(\tilde L/\bQ) &= \Gal(Q/\bQ) \times \Gal(\tilde R/\bQ) \times \Gal(\tilde R'/ \bQ).
	\end{align*}
	The above descriptions are compatible with the actions, that is if $(\varepsilon, \sigma, \sigma') \in \Gal(\tilde L/\bQ)$ and $(a,b,c) \in \Hom(L, \tilde L)$, then 
	\[(\varepsilon, \sigma, \sigma') \cdot (a,b,c) = (\varepsilon a, \sigma b, \sigma' c). \]
	Hence we have that
	\[ P = \bQ[\Hom(Q, Q)] \boxtimes \bQ[\Hom(R, \tilde R)] \boxtimes \bQ[\Hom(R', \tilde R')]. \]
	\end{proof}
	
	\begin{lemma} \label{decomp-perm}
		Let $L$, $Q$, $R$, $R'$ be fields as in \Cref{prop_relation} and $P$ be the permutation representation of $\Gal(\tilde L/\bQ)$ acting on $\Hom(L, \tilde L)$. Then $P$ decomposes as $P = P_+ \oplus P_-$ where $P_+$ (resp. $P_-$) is the isotypical component of type $\chi$ for the action of $\Gal(Q/\bQ)\cong \mu_2$ on $P$ where $\chi$ is the trivial representation (resp. the sign representation). Moreover $P_+ \subset K_\pi \otimes \bQ$.
	\end{lemma}
	\begin{proof}
	First, $\bQ[\Hom(Q,Q)] = \bUn \oplus \varepsilon$ where $\bUn$ is the trivial representation and $\varepsilon$ is the sign representation of $\Gal(Q/\bQ)=\mu_2$. Hence it follows from \Cref{lemma-permutation-box} that
	\begin{align*}
		P =  \bUn \boxtimes \bQ[\Hom(R, \tilde R)] &\boxtimes \bQ[\Hom(R', \tilde R')] \qquad \quad \big \}\; P_+ \\
		&\bigoplus \\
		 \varepsilon \boxtimes \bQ[\Hom(R, \tilde R)] &\boxtimes \bQ[\Hom(R', \tilde R')] \qquad \quad \big \}\; P_-
	\end{align*}
	Now, the relation $\pi \overline \pi = q$ tells us that for any $\sigma \in \Hom(L, \tilde L)$, we have $[\sigma] + [\overline \sigma] \in K_\pi$. Moreover, $\bUn \subset \bQ[\Hom(Q,Q)]$ is the line generated by $[\id_Q] + [\tau]$ where $\tau$ is the complex conjugation of $Q$. Hence an element of $P_+$ can be written as a linear combination of $[\sigma] + [\overline \sigma]$, which are in $K_\pi$. We conclude that $P_+ \subset K_\pi \otimes \bQ$.
	\end{proof}
	
	\begin{lemma} \label{decomp-pm}
          Let $L$, $Q$, $R$, $R'$ be fields as in \Cref{prop_relation} and $P = P_+ \oplus P_-$ as in \Cref{decomp-perm}. Then $P_-$ decomposes as $P_- = S_+ \oplus S_-$ where $S_+$ (resp. $S_-$) is the isotypical component of type $\chi$ for the action of $\Gal(\tilde R'/ \bQ)$, where $\chi$ is the trivial representation (resp. the standard representation).
          Moreover $S_+ \subset K_\pi \otimes \bQ$.
	\end{lemma}
	\begin{proof}
          Let us decompose $\bQ[\Hom(R', \tilde R')] = \bUn \oplus I'$, where 
          \[\bUn \subset \bQ[\Hom(R', \tilde R')]\]
          is the line generated by $\sum_{\mu \in \Hom(R', \tilde R')} [\mu]$ and $I'$ is the orthogonal to $\bUn$ for the euclidean scalar product. As $G'$ acts $2$-transitively on $\Hom(R', \tilde R')$, we have that $I'\otimes_{\bQ} \bC$ is an irreducible representation by \cite[Exercise 2.6]{ser77}, that we will call the standard representation of $\Gal(\tilde R'/ \bQ)$.
          From
          \[P_{-} = \varepsilon \boxtimes \bQ[\Hom(R, \tilde R)] \boxtimes \bQ[\Hom(R', \tilde R')],\]
          it follows that
	\begin{align*}
		P_{-} = \varepsilon \boxtimes \bQ[\Hom(&R, \tilde R)] \boxtimes \bUn \qquad \quad \big \}\; S_+ \\
				& \bigoplus \\
			\varepsilon \boxtimes \bQ[\Hom(&R, \tilde R)] \boxtimes I' \qquad \quad \big \}\; S_-
	\end{align*}
	From the fact that $N_{L/QR}(\pi) = q^{g'}$, we deduce that $N_{L/QR}(\pi \cdot \overline {\pi}^{-1}) = 1$. Then \Cref{lemma_norm} tells us that for all $\sigma \in \Hom(L, \tilde L)$ we have,
	\[ N_{L/QR}(\sigma) - N_{L/QR}(\overline \sigma) \in K_\pi. \]
	Elements of $S_+$ are linear combinations of these differences of norms because $\varepsilon$ is generated by $[\id_Q] - [\tau]$ and $\bUn \subset \bQ[\Hom(R', \tilde R')]$ is generated by $N_{R'/\bQ}(\mu)$ for any $\mu \in \Hom(R', \tilde R')$. Hence $S_+ \subset K_\pi\otimes \bQ$.
	\end{proof}

        \begin{lemma} \label{decomp-sm}
          Let $L$, $Q$, $R$, $R'$ be fields as in \Cref{prop_relation} and $P_- = S_+ \oplus S_-$ as is \Cref{decomp-pm}. Then $S_-$ decomposes as $S_- = T_+ \oplus T_-$ where $T_+$ (resp. $T_-$) is the isotypical component of type $\chi$ for the action of $\Gal(\tilde R/ \bQ)$, where $\chi$ is the trivial representation (resp. the standard representation). Moreover $T_+$ is irreducible and $T_-$ is either $\{0\}$ or an irreducible representation of $\Gal(\tilde L/ \bQ)$, and
          \[ \big ( K_\pi \otimes \bQ \big ) \cap T_+ = \{0\}. \]
        \end{lemma}

        \begin{proof}
          We decompose $\bQ[\Hom(R, \tilde R)] = \bUn \oplus I$ as a representation of $\Gal(\tilde R/ R)$, where $\bUn$ denotes the line generated by 
          \[ \sum_{\mu \in \Hom(R, \tilde R)} [\mu]\]
          and where $I$ is the orthogonal of $\bUn$ with respect to the euclidean scalar product. Then,
          \[S_-= \varepsilon \boxtimes \bQ[\Hom(R, \tilde R)] \boxtimes I'\]
          decomposes as
          \[ S_- = \left ( \varepsilon \boxtimes \bUn \boxtimes I' \right ) \oplus \left ( \varepsilon \boxtimes I \boxtimes I' \right ).\]
          in which the first summand will be denoted $T_+$, and the second summand $T_-$.

          By \Cref{irredtensor}, we have that $T_+ \otimes \bC$ is irreducible, hence also $T_+$.
          
          We also have that $I$ is generated by the vectors $[\mu] - [\mu']$ with $\mu, \mu' \in \Hom(R, \tilde R)$. Moreover either $G$ is trivial and $I = 0$ as well as $T_-$, or $G$ acts $2$-transitively on $\Hom(R, \tilde R)$ and $I \otimes_{\bQ} \bC$ is an irreducible representation of $G$ by \cite[Exercise 2.6]{ser77}, that we will call the standard representation.

          In case $T_- \not = 0$, then $T_- \otimes \bC$ is irreducible by \Cref{irredtensor} and so is $T_-$.
          
          Assume that $T_+$ is in $K_\pi \otimes \bQ$, it follows from \Cref{decomp-perm} and \Cref{decomp-pm} that
          \begin{equation} \label{eq:inclusion}
            P_+ \oplus S_+ \oplus T_+ \subset K_\pi \otimes \bQ.
          \end{equation}
          We also have
          \[\bQ[\Hom(Q, Q)] \boxtimes \bUn \boxtimes \bQ[\Hom(R', \tilde R')] \subset P_+ \oplus S_+ \oplus T_+. \]
          Hence $P_+ \oplus S_+ \oplus T_+$ contains $N_{L/QR'}(\sigma)$ for all $\sigma \in \Hom(L, \tilde L)$, and it follows from \eqref{eq:inclusion} that $N_{L/QR'}(\sigma_1)$ is in $K_\pi \otimes \bQ$. Moreover, \Cref{lemma_norm} tells us that some power of $N_{L/QR'}(\pi)$ is in $\bQ$, and this contradicts the hypothesis of \Cref{prop_relation}. As $T_+$ is irreducible, we deduce that
          \[ \big ( K_\pi \otimes \bQ \big )\cap T_+= \{0\}. \]
        \end{proof}
        \begin{lemma} \label{lemma-case-disj}
          Let $L$, $Q$, $R$, $R'$ be fields as in \Cref{prop_relation} and $T_-$ the representation of \Cref{decomp-sm}.
          Then either some power of $\pi$ is in $QR'$ and
          \[T_- \subset K_\pi \otimes \bQ ,\]
          or no power of $\pi$ is in $QR'$ and
          \[\big ( K_\pi \otimes \bQ \big ) \cap T_- = \{0\}.\]
          Moreover $P_+ \oplus S_+$ is generated by
          \[\left \{N_{L/QR}(\sigma), [\sigma] +[\overline \sigma]  \; |\; \sigma \in \Hom(L, \tilde L) \right \},\]
          and $P_+ \oplus S_+ \oplus T_-$ is generated by
          \[\left \{N_{L/QR}(\sigma), [\sigma] +[\overline \sigma], \; [\mu \sigma]  - [\sigma] \; |\; \sigma \in \Hom(L, \tilde L), \mu \in \Gal(\tilde L/ Q\tilde R') \right \} .\]
        \end{lemma}
        \begin{proof}
          Either $T_-$ is $\{0\}$, in which case $R=\bQ$ and the result is clear. Otherwise, $T_-$ is irreducible by \Cref{decomp-sm}. By \Cref{decomp-perm}, \Cref{decomp-pm} and \Cref{decomp-sm}, we have that $T_- \subset K_\pi \otimes \bQ$ if and only if 
          \[  P_+ \oplus S_+ \oplus T_- \subset K_\pi \otimes \bQ .\]
          Moreover,
          \begin{equation} \label{eq:inclusion2}
            T_- \subset \bQ[\Hom(Q, Q)] \boxtimes I \boxtimes \bQ[\Hom(R', \tilde R')] \subset P_+ \oplus S_+ \oplus T_-.
          \end{equation}
          Hence $T_- \subset K_\pi\otimes \bQ$ if and only if
          \[\bQ[\Hom(Q, Q)] \boxtimes I \boxtimes \bQ[\Hom(R', \tilde R')] \subset K_\pi \otimes \bQ.\]
          As $I$ is generated by $[\mu] - [\mu']$ for $\mu, \mu' \in \Hom(R, \tilde R)$, we have $T_- \subset K_\pi \otimes \bQ$ if and only if 
          \[ [\gamma \sigma] - [\sigma] \in K_\pi \otimes \bQ \]
          for all $\sigma \in \Hom(L, \tilde L)$ and $\gamma \in \Gal(\tilde L/ Q \tilde R')$. This happens if and only if there exists $k$ such that $\gamma \cdot \pi^k = \pi^k$ for all $\gamma \in \Gal(\tilde L/Q \tilde R')$. Hence $T_- \subset K_\pi \otimes \bQ$ if and only if some power of $\pi$ is in $Q R'$ (because $QR' = L \cap (Q\tilde R')$).

          From
          \[ S_+ \subset \bQ[\Hom(Q, Q)] \boxtimes \bQ[\Hom(R, \tilde R)] \boxtimes \bUn \subset P_+ \oplus S_+,\]
          we deduce that $P_+ \oplus S_+$ is generated by
          \begin{align*}
            &\bQ[\Hom(Q, Q)] \boxtimes \bQ[\Hom(R, \tilde R)] \boxtimes \bUn \text{ and } \\
            &\bUn \boxtimes \bQ[\Hom(R, \tilde R)] \boxtimes \bQ[\Hom(R' \tilde R')],
          \end{align*}
          which are generated by
          \[\left \{N_{L/QR}(\sigma), [\sigma] +[\overline \sigma]  \; |\; \sigma \in \Hom(L, \tilde L) \right \}.\]
          Moreover by \eqref{eq:inclusion2}, it follows that $P_+ \oplus S_+ \oplus T_-$ is generated by
          \[\left \{N_{L/QR}(\sigma), [\sigma] +[\overline \sigma], \; [\mu \sigma]  - [\sigma] \; |\; \sigma \in \Hom(L, \tilde L), \mu \in \Gal(\tilde L/ Q\tilde R') \right \} .\]
        \end{proof}
        
\section{More examples of mildly exotic varieties} \label{proof_nc_com}
		
	The goal of this section if to prove \Cref{nc_case} and \Cref{com_case}. Fix a prime number $p$. The proofs revolve around choices of CM-fields $L$ and of CM-types $\Phi$ for $L$. The main difference between the proofs of \Cref{nc_case} and \Cref{com_case} is that fields extensions are ramified at $p$ in \Cref{ramified_diagram} and unramified in \Cref{nr_diagram}. 
		
		Let $g' \geqslant 3$ be an odd number, and $p$ be a prime number. Let $\bQ_{p^2} = \Qp[\sqrt u]$ denote the unramified extension of $\Qp$ of degree $2$ where $u$ is an integer which is not a square in $\Qp$, and let $W$ be the unique unramified extension of $\Qp$ of degree $g'-1$. 
    \begin{lemma} \label{ramified_diagram}
    There exists infinitely many fields $F$ for which there is a CM-field $L$ whose only subfields are described by this diagram: \\
		\begin{minipage}{\textwidth}
		\centering
		\begin{tikzpicture}

    \node (Q1) at (0,0) {$\mathbb{Q}$};
    \node (Q2) at (-5,2) {$Q$};
    \node (Q3) at (-3,2) {$Q'$};
    \node (Q4) at (-1,2) {$B_{\bR}$};
    \node (Q5) at (3,2) {$R_0$};
	\node (Q6) at (-3, 4) {$B$};
	\node (Q7) at (1, 4) {$F$};    
	\node (Q8) at (3, 4) {$F'$};    
	\node (Q9) at (5, 4) {$R$};    
    
    \node (Q10) at (0,6) {$L$};

   	\draw (Q1)--(Q2) node [pos=0.5, left,inner sep=0.40cm] {2};
    \draw (Q1)--(Q3) node [pos=0.5, left,inner sep=0.25cm] {2};
    \draw (Q1)--(Q4) node [pos=0.5, left,inner sep=0.1cm] {2};
    \draw (Q1)--(Q5) node [pos=0.5, right,inner sep=0.25cm] {$g \prime$};
    \draw (Q2)--(Q6);
    \draw (Q3)--(Q6);
    \draw (Q4)--(Q6);
    \draw (Q6)--(Q10) node [pos=0.5, right,inner sep=0.25cm] {$g \prime$};
    \draw (Q5)--(Q7) node [pos=0.5, right,inner sep=0.25cm] {2};
    \draw (Q5)--(Q8) node [pos=0.5, right,inner sep=0.25cm] {2};
    \draw (Q5)--(Q9) node [pos=0.5, right,inner sep=0.40cm] {2};
    \draw (Q7)--(Q10);
    \draw (Q8)--(Q10);
    \draw (Q9)--(Q10);
    \draw[dotted] (Q2)--(Q7);
    \draw[dotted] (Q3)--(Q8);
    \draw[dotted] (Q4)--(Q9);

    \end{tikzpicture}
    \end{minipage}
    with $Q$ and $Q'$ quadratic imaginary fields, $B_{\bR}$, $R_0$ totally real such that the Galois closure $\tilde R_0$  of $R_0$ is disjoint from $B_{\bR}$, with $\Gal(\tilde {R_0}/\bQ)=S_{g'}$, and such that after $-\otimes \Qp$ we get the following diagram: \\
    \begin{minipage}{\textwidth}
    \begin{center}
    		\begin{tikzpicture}[scale=0.9]
    \node (Q1) at (0,0) {$\Qp.$};
    \node (Q2) at (-5,2) {$\bQ_{p^2}$};
    \node (Q3) at (-3,2) {$\Qp[\sqrt p]$};
    \node (Q4) at (-1,2) {$\Qp[\sqrt {up}]$};
    \node (Q5) at (4,2) {$W \times \bQ_p$};
	\node (Q6) at (-3, 4) {$\bQ_{p^2}[\sqrt p]$};
	\node (Q7) at (1, 4) {$W \times W \times \bQ_{p^2}$};    
	\node (Q8) at (4, 4) {$W[\sqrt p] \times \Qp[\sqrt p]$};    
	\node (Q9) at (8, 4) {$W[\sqrt p] \times \Qp[\sqrt {up}]$};    
    
    \node (Q10) at (0,6) {$W[\sqrt p] \times W[\sqrt p] \times \bQ_{p^2}[\sqrt p]$};

   	\draw (Q1)--(Q2);
    \draw (Q1)--(Q3);
    \draw (Q1)--(Q4);
    \draw (Q1)--(Q5);
    \draw (Q2)--(Q6);
    \draw (Q3)--(Q6);
    \draw (Q4)--(Q6);
    \draw (Q6)--(Q10);
    \draw (Q5)--(Q7);
    \draw (Q5)--(Q8);
    \draw (Q5)--(Q9);
    \draw (Q7)--(Q10);
    \draw (Q8)--(Q10);
    \draw (Q9)--(Q10);
    \draw[dotted] (Q2)--(Q7);
    \draw[dotted] (Q3)--(Q8);
    \draw[dotted] (Q4)--(Q9);

    \end{tikzpicture}
    \end{center}
    \end{minipage}
    \end{lemma}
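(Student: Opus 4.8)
The plan is to assemble the whole configuration from two independent pieces: a \emph{fixed} biquadratic field $B$ realizing the prescribed degree-$4$ local behaviour at $p$, and a \emph{varying} totally real field $R_0$ of degree $g'$ realizing the prescribed split-unramified behaviour at $p$. All the remaining fields of the diagram are then composita over $\bQ$, namely $F = Q\cdot R_0$, $F' = Q'\cdot R_0$, $R = B_{\bR}\cdot R_0$ and $L = B\cdot R_0$, with $B = Q\cdot Q'\cdot B_{\bR}$, and the infinitely many $F$ come from letting $R_0$ vary.

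First I would fix the biquadratic field. Recall the three quadratic extensions of $\Qp$ occurring in the statement: the unramified $\bQ_{p^2} = \Qp[\sqrt u]$ and the ramified $\Qp[\sqrt p]$, $\Qp[\sqrt{up}]$; their quadratic characters multiply to the trivial one, so they are exactly the three quadratic subfields of the degree-$4$ field $\bQ_{p^2}[\sqrt p]$. By Dirichlet's theorem on primes in arithmetic progressions I would choose coprime positive squarefree integers $d_1, d_2$ (fixing their $p$-adic valuations and residues suitably) so that $\bQ(\sqrt{-d_1})\otimes\Qp = \bQ_{p^2}$ and $\bQ(\sqrt{d_2})\otimes\Qp = \Qp[\sqrt{up}]$; then automatically $\bQ(\sqrt{-d_1 d_2})\otimes\Qp = \Qp[\sqrt p]$. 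Setting $Q = \bQ(\sqrt{-d_1})$, $Q' = \bQ(\sqrt{-d_1 d_2})$ (both imaginary quadratic), $B_{\bR} = \bQ(\sqrt{d_2})$ (real quadratic) and $B = Q\cdot B_{\bR}$, the field $B$ is biquadratic with third quadratic subfield $Q'$, and $B\otimes\Qp = \bQ_{p^2}[\sqrt p]$ is a field because $\bQ_{p^2}$ and $\Qp[\sqrt{up}]$ are distinct quadratic extensions of $\Qp$.

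Next, exactly as in the proof of \Cref{fields_construction}, I would use the approximation theorem to produce infinitely many monic degree-$g'$ polynomials $P\in\bQ[x]$ that $p$-adically approximate a monic polynomial with one irreducible \emph{unramified} factor of degree $g'-1$ and one linear factor, and archimedeanly approximate a polynomial with $g'$ distinct real roots (if one wants the displayed fields to exhaust the subfields of $L$, one may also impose at two auxiliary primes that the Galois closure of $P$ be $S_{g'}$). A root field $R_0$ of such a $P$ is totally real of degree $g'$ with $R_0\otimes\Qp \cong W\times\Qp$, where $W$ is the unramified extension of $\Qp$ of degree $g'-1$. Since $g'$ is odd, $[B:\bQ]=4$ is Galois and coprime to $[R_0:\bQ]=g'$, so $B$ and $R_0$ are linearly disjoint over $\bQ$; hence all the composita have the degrees shown in the first diagram, $L = B\cdot R_0$ is CM with maximal totally real subfield $R = B_{\bR}\cdot R_0$, and $F = Q\cdot R_0$ is CM with maximal totally real subfield $R_0$. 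As $R_0$ is recovered from $F$ as its maximal totally real subfield, distinct $R_0$ give distinct $F$, so there are infinitely many such $F$.

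Finally I would read off the second diagram. For linearly disjoint number fields $X,Y$ with $X/\bQ$ Galois one has $X\otimes_{\bQ}Y \cong X\cdot Y$, hence $(X\cdot Y)\otimes\Qp \cong (X\otimes\Qp)\otimes_{\Qp}(Y\otimes\Qp)$; taking $Y = R_0$ reduces every entry to a tensor product of étale $\Qp$-algebras. Beyond the base cases above the only inputs are: $\bQ_{p^2}\otimes_{\Qp}W \cong W\times W$ (because $g'-1$ is even, so $\bQ_{p^2}\subseteq W$); $\Qp[\sqrt p]\otimes_{\Qp}W \cong W[\sqrt p]$, and likewise with $\sqrt{up}$ in place of $\sqrt p$ (an unramified and a totally ramified extension are linearly disjoint), together with $W[\sqrt{up}] = W[\sqrt p]$ since $\sqrt u\in W$. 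Substituting gives $F\otimes\Qp\cong W\times W\times\bQ_{p^2}$, $F'\otimes\Qp\cong W[\sqrt p]\times\Qp[\sqrt p]$, $R\otimes\Qp\cong W[\sqrt p]\times\Qp[\sqrt{up}]$ and $L\otimes\Qp\cong W[\sqrt p]\times W[\sqrt p]\times\bQ_{p^2}[\sqrt p]$, which is the second diagram. I expect the only real difficulty to be bookkeeping, together with checking uniformity in $p$ (in particular $p=2$, where there are more square classes); this is harmless here because $u$ is chosen precisely so that $\Qp[\sqrt u]$ is unramified — whence $\sqrt u\in W$ — and $\bQ_{p^2}[\sqrt p]$ remains a degree-$4$ field for every $p$.
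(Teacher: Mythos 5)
Your proposal is correct and follows essentially the same route as the paper's own (very terse) proof: fix imaginary quadratic $Q,Q'$ (hence $B_\bR$ and $B$) with the prescribed local behaviour at $p$, produce infinitely many totally real $R_0$ of degree $g'$ with $R_0\otimes\Qp\cong W\times\Qp$ and maximal Galois closure via the approximation/Chebotarev argument of \Cref{fields_construction}, and take $L=Q\cdot Q'\cdot R_0=B\cdot R_0$, using $\gcd(4,g')=1$ for linear disjointness. Your write-up merely spells out the bookkeeping — the Dirichlet choice of $d_1,d_2$, the degree-$4$ local identification $B\otimes\Qp\cong\bQ_{p^2}[\sqrt p]$, and the $\Qp$-algebra tensor computations giving the second diagram — none of which the paper records explicitly.
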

    \begin{proof}
    		Let $T$ and $C$ be distinct prime numbers, different from $p$, and consider $Q, Q'$ quadratic imaginary fields with $p$ inert in $Q$ and $Q' \otimes_{\bQ} \Qp = \Qp[\sqrt p]$. There are infinitely many choices for $Q$. Let $B = Q \cdot Q'$, and $B_{\bR}$ be the real quadratic subfield of $B$. We have that $Q \otimes \Qp = \Qp[\sqrt u]$ and $Q' \otimes \Qp = \Qp[\sqrt p]$, hence $B_{\bR}\otimes \Qp = \Qp[\sqrt {up}]$. By \Cref{lemma_real_fields}, there are infinitely many totally real fields $R_0$ of degree $g'$, such that the Galois closure $\tilde {R_0}$ is disjoint from $B_{\bR}$, such that $R_0 \otimes \Qp = W \times \Qp$ and with $\Gal(\tilde{R_0}/\bQ) = S_{g'}$. Then $L = Q \cdot Q' \cdot R_0$ will satisfy the assumptions of the lemma. Indeed, $\tilde L = Q \otimes (B_{\bR} \otimes \tilde{R_0})$, so that the Galois group $\Gamma$ can be written as a product
	 \[\Gal(\tilde L/\bQ) = \mu_2 \times \mu_2 \times S_{g'}. \]
	 The subgroup of $\Gamma$ fixing $L$ is $H_L= \{1\} \times \{1\} \times \Fix(1)$, hence the subgroups of $\Gamma$ containing $H_L$ correspond via Galois theory to subfields of $L$. A group theoretic computation gives us the desired diagram. Moreover we see that subfields of $L$ are obtained by taking compositum of $Q, Q', B_{\bR}$ and $R_0$. Hence the diagram $\otimes \Qp$ follows from computations of tensor products of finite $\Qp$-algebras.
    	\end{proof}
    
    \begin{lemma} \label{nc_abelianscheme}
    		For any field $L$ satisfying \Cref{ramified_diagram}, we have three prime ideals $\frk p$, $\overline {\frk p}$ and $\frk q$ above $p$ in $\mathcal O_L$, with valuations $v, \overline{v}$ and $w$, and the complex conjugation of $L$ swaps $\frk p$ and $\overline {\frk p}$ and fixes $\frk q$. There is an abelian scheme $\mathcal A$ over the ring of integers of a $p$-adic field $K$ with complex multiplication by $L$, and $\pi \in L$ acting as the Frobenius on the special fibre of $\mathcal A$, with valuations:
    \begin{equation*}
    \left\{
    \begin{array}{l}
    		v(\pi) = \frac{1}{2g'-2} \\
    		\overline{v}(\pi) = \frac{2g'-3}{2g'-2} \\
    		w(\pi) = \frac{1}{2}.
    	\end{array} \right.
    \end{equation*}
    
    	\end{lemma}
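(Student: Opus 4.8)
The plan is to read off the splitting of $p$ in $L$ from the second diagram of \Cref{ramified_diagram}, produce a CM-type with the prescribed local invariants by Tate's combinatorial recipe, and then conclude with \Cref{thm_bourbaki} and the Shimura--Taniyama formula \Cref{S-T_formula}; the argument runs formally parallel to \Cref{abelianscheme_construction}.

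First I would identify the primes above $p$. Since $L = B \cdot R_0$ with $B = Q \cdot Q'$ the imaginary biquadratic field (whose real quadratic subfield is $B_{\bR}$) and $R_0$ of degree $g'$ linearly disjoint from $B$, one has $L \otimes \Qp = (B \otimes \Qp) \otimes_{\Qp} (R_0 \otimes \Qp) = \bQ_{p^2}[\sqrt p] \otimes_{\Qp} (W \times \Qp)$. As $g'-1$ is even we have $\bQ_{p^2} \subset W$, so $\bQ_{p^2} \otimes_{\Qp} W \cong W \times W$ and hence $L \otimes \Qp \cong W[\sqrt p] \times W[\sqrt p] \times \bQ_{p^2}[\sqrt p]$, which exhibits the three primes $\frkp, \overline{\frkp}, \frkq$ with local degrees $2g'-2$, $2g'-2$ and $4$. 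To see that $\tau_L$ swaps $\frkp$ and $\overline{\frkp}$ and fixes $\frkq$, I would use that $\tau_L$ is trivial on the totally real $R_0$ and equals the nontrivial element of $\Gal(B/B_{\bR})$ on $B$: writing $B \otimes \Qp = \Qp[\sqrt u, \sqrt p]$, the subfield $B_{\bR} \otimes \Qp = \Qp[\sqrt{up}]$ is fixed precisely by $\sqrt u \mapsto -\sqrt u$, $\sqrt p \mapsto -\sqrt p$, so $\tau_L = (\sqrt u \mapsto -\sqrt u,\ \sqrt p \mapsto -\sqrt p) \otimes \mathrm{id}_{R_0}$; the component $\sqrt u \mapsto -\sqrt u$ interchanges the two copies of $W$ in $\bQ_{p^2} \otimes_{\Qp} W$, hence the two factors $W[\sqrt p]$, while it fixes the factor $\bQ_{p^2}[\sqrt p]$ as a field.

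Next I would build the CM-type. Let $H_v, H_{\overline v}, H_w \subset \Hom(L, \Qpbar)$ be the sets of embeddings factoring through $L_{\frkp}, L_{\overline{\frkp}}, L_{\frkq}$, so $\#H_v = \#H_{\overline v} = 2g'-2$ and $\#H_w = 4$, and I aim for $\#\Phi_v = 1$, $\#\Phi_{\overline v} = 2g'-3$, $\#\Phi_w = 2$. These satisfy the compatibility conditions for a CM-type, namely $\#\Phi_v + \#\Phi_{\overline v} = 2g'-2 = [L_{\frkp} : \Qp]$, the value $\#\Phi_w = \tfrac12 [L_{\frkq} : \Qp]$ that is forced at the conjugation-fixed prime $\frkq$, and $\#\Phi_v + \#\Phi_{\overline v} + \#\Phi_w = 2g' = \tfrac12 [L : \bQ]$; moreover all three numbers lie in the admissible ranges since $g' \geqslant 3$. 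So the combinatorial result of Tate \cite[\S 4]{tat71}, exactly as invoked in \Cref{CMtype_construction}, provides such a $\Phi$. Then \Cref{thm_bourbaki} yields a $p$-adic field $K$ and an abelian scheme $\mathcal A$ over $\mathcal O_K$ of CM-type $(L, \Phi)$, and \Cref{S-T_formula} applied to $(\mathcal A, \Phi)$ produces $\pi \in L$ inducing the Frobenius of the special fibre with $v(\pi) = \#\Phi_v / \#H_v = 1/(2g'-2)$, $\overline v(\pi) = \#\Phi_{\overline v} / \#H_{\overline v} = (2g'-3)/(2g'-2)$ and $w(\pi) = \#\Phi_w / \#H_w = 1/2$, in the normalization $v(q) = \overline v(q) = w(q) = 1$ with $q$ the residue cardinality of $K$.

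Every individual computation here is routine, essentially the same bookkeeping already carried out for \Cref{main_result}. The one step that demands some care is the action of $\tau_L$ on the \'etale algebra $L \otimes \Qp$: one has to check that complex conjugation genuinely permutes the two $W[\sqrt p]$-factors, so that $\frkp$ and $\overline{\frkp}$ are distinct and conjugate, rather than stabilizing each of them, and that it fixes the $\bQ_{p^2}[\sqrt p]$-factor, which is what makes $\frkq$ the conjugation-fixed prime carrying the slope $1/2$.
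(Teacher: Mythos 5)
Your proof is correct and follows essentially the same route as the paper's: cite Tate's combinatorial recipe to produce a CM-type with the stated local cardinalities, then invoke \Cref{thm_bourbaki} and \Cref{S-T_formula}. The paper's own proof is terser and leaves implicit the verification of the $p$-adic splitting of $L$ and the admissibility conditions for the CM-type (in particular that conjugation swaps $\frkp$ and $\overline\frkp$ and fixes $\frkq$, which forces $\#\Phi_w = \tfrac12\#H_w$); you supply these checks explicitly, which is a useful addition but not a genuinely different argument.
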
    		
    	\begin{proof}
    	By \cite[\S 4]{tat71}, we can choose a CM-type $\Phi$ for $L$ such that:
    \begin{equation*}
    \left\{
    \begin{array}{l}
    		\#\Phi_{v} = 1 \\
    		\#\Phi_{\overline{v}} = 2g'-3 \\
    		\#\Phi_{w} = 2.
    	\end{array} \right.
    \end{equation*}
    By \Cref{thm_bourbaki}, we have a $p$-adic field $K$ and an abelian scheme $\mathcal A$ over the ring of integers $\mathcal O_K$, with complex multiplication by $L$ of type $\Phi$. Hence by \Cref{S-T_formula}, we have $\pi \in L$ acting as the Frobenius on the special fibre of $\mathcal A$, with valuations prescribed as in the lemma, given in terms of the CM-type $\Phi$.
    \end{proof}
	\begin{lemma} \label{nc_avar}
		Let $F$ and $L$ be as in \Cref{ramified_diagram}, then we can choose $\mathcal A$ in \Cref{nc_abelianscheme} such that its special fiber $A$ is a geometrically simple abelian variety with non-commutative algebra of endomorphisms. More precisely, $\End^0(A)$ will be a quarternion algebra over $F$ split by $L$, and $F = \bQ(\pi)$ where $\pi$ is the Frobenius endomorphism of $A$.
	\end{lemma}
    \begin{proof}
	    Let $\mathcal A$ be any abelian scheme satisfying \Cref{nc_abelianscheme}, and $\pi \in L$ corresponding to the Frobenius endomorphism of the special fibre. By \Cref{easy_case}, we have $k$ such that $\pi^k \in F$, we will assume $k = 1$ (we can take a degree $k$ extension of the residual field). Because $v(\pi) \not = \overline v(\pi)$ we have that $\pi$ is not in $B$ nor $F'$ or $R$. Hence $F = \bQ(\pi)$ and $[L:F] = 2$. We know by \cite[Theorem 3]{tat66} that the special fibre $A$ of $\mathcal A$ is isogenous to a power of a simple variety $B$. We want to prove it is in fact simple, to do so we compute the invariants of $\End^0(B)$ as a $F$-central algebra. Let $v'$ be the restriction of $v$ to $F$, we have 
	    \[\text{inv}_{v'}(E) = (g'-1) \times v(\pi) = \frac{1}{2}\]
	   which is not an integer. Hence \cite[Theorem 1]{tat71} allows us to compute $\dim(B) = [F:\bQ]$, which is also the dimension of $A$, hence $A = B$ is simple. Moreover, from $[L:\bQ(\pi)]=2$ and  \cite[Theorem 1]{tat71}, we deduce that $[\End^0(A): \bQ(\pi)]=4$.
    \end{proof}
    \begin{proof}[Proof of \Cref{nc_case}]
    Let $A/\bF_q$ be an abelian variety satisfying \Cref{nc_avar}. Following \Cref{rmk-rank-eigvalues}, to construct submotives of $A$, we have to exhibit non-trivial relations between the conjugates of the Weil $q$-number $\pi$ associated to $A$. The norm 
    \[N_{L/B}(\pi) \in B\]
is a Weil $q$-number, and $B/\bQ$ satisfies the hypothesis of \Cref{easy_case}. Hence there exists an integer $k > 0$ such that $N_{L/B}(\pi^k)$ is a power of $q$. Moreover, $\pi^k$ is the Frobenius endomorphism of the extension of scalars of $A$ to $\bF_{q^k}$, and there is an extension $K'/K$ of $p$-adic fields such that $\mathcal A':= \mathcal A \times_{\mathcal O_K} \mathcal O_{K'}$ has $A\times_{\bF_q} \bF_{q^k}$ as its special fiber. If we replace $\mathcal A$ by $\mathcal A'$, we are still in the setup of \Cref{nc_abelianscheme}. Hence we can assume that 
\[N_{L/B}(\pi) = q^{g'/2}\]
and by transivity of norms we get
\[N_{L/Q}(\pi) = q^{g'}.\]
This last relation gives rise to an exotic submotive $M$ in $\frk h(A)$. We now want to prove that $M$ is the only exotic submotive, hence we have to study multiplicative relations between the conjugates of $\pi$. Now apply \Cref{prop_relation} with $Q:=Q$, $R:= B_{\bR}$ and $R':= R_0$. Indeed, we have that 
	\begin{itemize}
		\item $B_{\bR}$ and $\tilde R_0$ are linearly disjoint
		\item $N_{L/B}(\pi) = q^{g'/2}$
		\item No powers of $N_{L/F}(\pi) = \pi^2$ are in $\bQ$. Indeed, $v(\pi) \not = \overline v(\pi)$ hence no power of $\pi$ is stable by complex conjugation.
		\item $\mu_2$ acts $2$-transitively on $\Hom(B_{\bR}, B_{\bR})$
		\item $g' \geqslant 2$ and $S_{g'}$ acts $2$-transitively on $\Hom(R_0, \tilde R_0) = \{1, \dots, g'\}$
	\end{itemize}
We are in the case where $\pi \in Q R_0 = F$, hence we get that the relations between the conjugates of $\pi$ are generated by $N_{L/B}(\pi) = q^{g'/2}$, $\pi  \overline \pi = q$ and $\sigma(\pi) = \sigma'(\pi)$ for $\sigma, \sigma': L \to \tilde L$ such that $\sigma_{|F} = \sigma'_{|F}$. But $N_{L/B}(\pi)$ is a product of $g'$ conjugates of $\pi$, and $g'$ is odd, hence the associated motive is of odd weight, and it cannot contain Tate classes. As we know a set of generators for the relations between the conjugates of $\pi$, and by \Cref{exoset_exomot}, we have that the only relations contributing to exotic Tate classes are $N_{L/Q}(\pi) = q^{g'}$ and $N_{L/Q}(\overline \pi) = q^{g'}$. These two relations are conjugate to each other, hence the associated motive $M$ is of rank $2$ and $A$ is mildly exotic.
	\end{proof}
	\begin{remark}
		In the previous proof, one could ask if the relation $N_{L/Q'}(\pi) = q^{g/2}$ gives rise to another interesting submotive of $\frk h(\mathcal A)$. In fact, it gives rise to a motive which contains only Lefschetz classes, in other words it is non-exotic. This is because there is redundancy in the eigenvalues of the Frobenius acting on the special fibre $A$ of $\mathcal A$, coming from the non-commutativity of $\End^0(A)$. \\
		More explicitely for some numbering of the conjugates of $\pi$:
		\[N_{L/Q}(\pi) = \pi_1^2 \times \dots \times \pi_{g/2}^2 \]
		whereas
		\[N_{L/Q'}(\pi) = \pi_1 \overline{\pi_1} \times \dots \times \pi_{g/2} \overline{\pi_{g/2}}.\]
	\end{remark}
	
	Let $g' \geqslant 3$ be an odd number and $p$ be a prime number. Now we will prove \Cref{com_case}.
	\begin{lemma} \label{nr_diagram}
	 Recall that $W$ is the unique unramified extension of $\Qp$ of degree $g'-1$. There exists infinitely many CM-fields $L$ such that the subfields of $L$ are described by this diagram:\\
	\begin{minipage}{\textwidth}
	\centering
    	\begin{tikzpicture}[scale=1]
    \node (Q1) at (0,0) {$\mathbb{Q}$};
    \node (Q2) at (-5,2) {$Q$};
    \node (Q3) at (-3,2) {$Q'$};
    \node (Q4) at (-1,2) {$B_{\bR}$};
    \node (Q5) at (3,2) {$R_0$};
	\node (Q6) at (-3, 4) {$B$};
	\node (Q7) at (1, 4) {$F$};    
	\node (Q8) at (3, 4) {$F'$};    
	\node (Q9) at (5, 4) {$R$};    
    
    \node (Q10) at (0,6) {$L$};

   	\draw (Q1)--(Q2) node [pos=0.5, left,inner sep=0.40cm] {2};
    \draw (Q1)--(Q3) node [pos=0.5, left,inner sep=0.25cm] {2};
    \draw (Q1)--(Q4) node [pos=0.5, left,inner sep=0.1cm] {2};
    \draw (Q1)--(Q5) node [pos=0.5, right,inner sep=0.25cm] {$g \prime$};
    \draw (Q2)--(Q6);
    \draw (Q3)--(Q6);
    \draw (Q4)--(Q6);
    \draw (Q6)--(Q10) node [pos=0.5, right,inner sep=0.25cm] {$g \prime$};
    \draw (Q5)--(Q7) node [pos=0.5, right,inner sep=0.25cm] {2};
    \draw (Q5)--(Q8) node [pos=0.5, right,inner sep=0.25cm] {2};
    \draw (Q5)--(Q9) node [pos=0.5, right,inner sep=0.40cm] {2};
    \draw (Q7)--(Q10);
    \draw (Q8)--(Q10);
    \draw (Q9)--(Q10);
    \draw[dotted] (Q2)--(Q7);
    \draw[dotted] (Q3)--(Q8);
    \draw[dotted] (Q4)--(Q9);

    \end{tikzpicture}
    \end{minipage}
    with $Q$ and $Q'$ quadratic imaginary fields, $B_{\bR}$, $R_0$ totally real, such that the Galois closure $\tilde R_0$ of $R_0$ is disjoint from $B_{\bR}$, with $\Gal(\tilde {R_0}/\bQ)=S_{g'}$ and such that after $-\otimes \Qp$ we get the following diagram: \\
    	\begin{minipage}{\textwidth}
    	\centering
    	\begin{tikzpicture}[scale=1]
    \node (Q1) at (0,0) {$\Qp.$};
    \node (Q2) at (-5,2) {$\bQ_{p^2}$};
    \node (Q3) at (-3,2) {$\bQ_{p^2}$};
    \node (Q4) at (-1,2) {$\Qp \times \Qp$};
    \node (Q5) at (4,2) {$W \times \bQ_p$};
	\node (Q6) at (-3, 4) {$\bQ_{p^2} \times \bQ_{p^2}$};
	\node (Q7) at (1, 4) {$W \times W \times \bQ_{p^2}$};    
	\node (Q8) at (4, 4) {$W \times W \times \bQ_{p^2}$};    
	\node (Q9) at (7, 4) {$W \times W \times \Qp \times \Qp$};    
    
    \node (Q10) at (0,6) {
   		$W \times W \times W \times W \times \bQ_{p^2} \times \bQ_{p^2}$
    };

   	\draw (Q1)--(Q2);
    \draw (Q1)--(Q3);
    \draw (Q1)--(Q4);
    \draw (Q1)--(Q5);
    \draw (Q2)--(Q6);
    \draw (Q3)--(Q6);
    \draw (Q4)--(Q6);
    \draw (Q6)--(Q10);
    \draw (Q5)--(Q7);
    \draw (Q5)--(Q8);
    \draw (Q5)--(Q9);
    \draw (Q7)--(Q10);
    \draw (Q8)--(Q10);
    \draw (Q9)--(Q10);
    \draw[dotted] (Q2)--(Q7);
    \draw[dotted] (Q3)--(Q8);
    \draw[dotted] (Q4)--(Q9);

    \end{tikzpicture}
    \end{minipage}
    \end{lemma}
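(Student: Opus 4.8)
The plan is to follow the proof of \Cref{ramified_diagram} verbatim, replacing its ramified conditions at $p$ by unramified ones. First I would fix two distinct imaginary quadratic fields $Q$ and $Q'$ in which $p$ is inert; there are infinitely many such (take fundamental discriminants that are non-squares mod $p$, with the usual adjustment at $p=2$). Their compositum $B=Q\cdot Q'$ is then an imaginary biquadratic field whose three quadratic subfields are $Q$, $Q'$ and a real quadratic field $B_{\bR}$. Since $p$ is inert in both $Q$ and $Q'$, the Frobenius at $p$ in $\Gal(B/\bQ)=(\bZ/2)^2$ is the element that is nontrivial on both factors, hence it generates $\Gal(B/B_{\bR})$; therefore $p$ is unramified in $B$, splits in $B_{\bR}$, and $B\otimes\Qp\cong\bQ_{p^2}\times\bQ_{p^2}$, $B_{\bR}\otimes\Qp\cong\Qp\times\Qp$. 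This already pins down the left-hand column of both diagrams.

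Next I would produce, exactly as in \Cref{fields_construction}, infinitely many totally real fields $R_0$ of degree $g'$ with $\Gal(\tilde R_0/\bQ)=S_{g'}$, disjoint from $B$, and with prescribed completion $R_0\otimes\Qp\cong W\times\Qp$, where $W$ is the unramified extension of $\Qp$ of degree $g'-1$ (so that $W\times\Qp$ has $\Qp$-dimension $g'=[R_0:\bQ]$); in other words $p$ decomposes in $R_0$ as one unramified prime of residue degree $g'-1$ together with one prime of degree $1$. Concretely, one applies the approximation theorem to a monic polynomial of degree $g'$ that is $p$-adically close to (an irreducible factor of degree $g'-1)\times$(a linear factor), $\ell$-adically close to an irreducible polynomial (forcing a $g'$-cycle), $\ell'$-adically close to a polynomial with exactly $g'-2$ distinct roots (forcing a transposition), and totally split over $\bR$; a $g'$-cycle and a transposition generate $S_{g'}$. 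Since $g'$ is odd, $R_0\cap B$ is a totally real subfield of $B$ of degree dividing $\gcd(g',4)=1$, so $R_0\cap B=\bQ$ automatically, and as $B/\bQ$ is Galois this makes $B$ and $R_0$ linearly disjoint — no extra condition is needed.

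I would then set $L=B\cdot R_0=B\otimes_{\bQ}R_0$. Linear disjointness gives $[L:\bQ]=4g'$ and identifies the lattice of subfields of $L$ with the one drawn: the quadratic fields $Q$, $Q'$, $B_{\bR}$, the degree-$2g'$ fields $F=Q\cdot R_0$, $F'=Q'\cdot R_0$, $R=B_{\bR}\cdot R_0$, and $B$, $B_{\bR}\cdot R_0$ themselves. Since $B$ is CM with maximal totally real subfield $B_{\bR}$ and $R_0$ is totally real, $L$ is a CM-field with maximal totally real subfield $B_{\bR}\cdot R_0$. Varying $R_0$ produces infinitely many such $L$.

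Finally, the $p$-adic diagram is read off by computing each completion $M\otimes\Qp$ as a tensor product over $\Qp$. The one nonformal input is that, $g'$ being odd, $g'-1$ is even, so $\gcd(2,g'-1)=2$ and $\bQ_{p^2}\otimes_{\Qp}W$ is a product of two copies of the unramified extension of degree $\operatorname{lcm}(2,g'-1)=g'-1$, i.e.\ $\bQ_{p^2}\otimes_{\Qp}W\cong W\times W$. Granting this, everything is bookkeeping: $F\otimes\Qp\cong\bQ_{p^2}\otimes_{\Qp}(W\times\Qp)\cong W\times W\times\bQ_{p^2}$, and likewise for $F'$; $R\otimes\Qp\cong(\Qp\times\Qp)\otimes_{\Qp}(W\times\Qp)\cong W\times W\times\Qp\times\Qp$; and $L\otimes\Qp\cong(\bQ_{p^2}\times\bQ_{p^2})\otimes_{\Qp}(W\times\Qp)$, which is the product of four copies of $W$ and two copies of $\bQ_{p^2}$, exactly the bottom node of the second diagram. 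The only genuine point, as in \Cref{fields_construction}, is to secure the global condition $\Gal(\tilde R_0/\bQ)=S_{g'}$ simultaneously with the local splitting type at $p$ (and at $\ell$, $\ell'$, $\infty$), which the approximation argument handles at once; no new difficulty arises.
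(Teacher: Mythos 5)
Your proof is correct and follows essentially the same route as the paper's one-line argument: take $Q,Q'$ imaginary quadratic with $p$ inert, $R_0$ totally real with full symmetric Galois closure and the prescribed $p$-adic splitting type via the approximation theorem, and set $L=B\cdot R_0$. You also correctly spot and fix the paper's typo ($W$ must have degree $g'-1$, not $g'$, so that $W\times\Qp$ has $\Qp$-dimension $g'$), and you supply the routine local tensor computations and the automatic linear-disjointness of $B$ and $R_0$ (since $\gcd(g',4)=1$) that the paper leaves implicit.
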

    \begin{proof}
    	 Consider $Q, Q'$ quadratic imaginary fields with $p$ inert in $Q$ and $Q'$. There are infinitely many choices for $Q$. Let $B = Q \cdot Q'$, and $B_{\bR}$ be the real quadratic subfield of $B$. We have that $Q \otimes \Qp =Q' \otimes \Qp = \bQ_{p^2}$, hence $B_{\bR}\otimes \Qp = \Qp \times \Qp$. By \Cref{lemma_real_fields}, there are infinitely many totally real fields $R_0$ of degree $g'$, such that the Galois closure $\tilde {R_0}$ is disjoint from $B_{\bR}$, such that $R_0 \otimes \Qp = W \times \Qp$ and with $\Gal(\tilde{R_0}/\bQ) = S_{g'}$. Then $L = Q \cdot Q' \cdot R_0$ will satisfy the assumptions of the lemma. Indeed, $\tilde L = Q \otimes (B_{\bR} \otimes \tilde{R_0})$, so that the Galois group $\Gamma$ can be written as a product
	 \[\Gal(\tilde L/\bQ) = \mu_2 \times \mu_2 \times S_{g'}. \]
	 The subgroup of $\Gamma$ fixing $L$ is $H_L= \{1\} \times \{1\} \times \Fix(1)$, hence the subgroups of $\Gamma$ containing $H_L$ correspond via Galois theory to subfields of $L$. A group theoretic computation gives us the desired diagram. Moreover we see that subfields of $L$ are obtained by taking compositum of $Q, Q', B_{\bR}$ and $R_0$. Hence the diagram $\otimes \Qp$ follows from computations of tensor products of finite $\Qp$-algebras.
    \end{proof}
    According to the diagram, we have $p$-adic valuations $v_1, \overline {v_1}, v_2, \overline{v_2}, w_1, w_2$ on $L$. To be more precise, if $V,\overline V,W$ are the valuations of $F$, then the valuations above $V$ in $L$ are $v_1, v_2$, the valuations above $W$ are $w_1, w_2$. If $V_1, V_2,W$ are the valuations of $F'$, then the valuations above $V_1$ are $v_1, \overline{v_1}$, the valuations above $V_2$ are $v_2, \overline{v_2}$. 
    \begin{lemma} \label{com_abelianscheme}
    		Let $L$ a field satisfying \Cref{nr_diagram}. We can find an abelian scheme $\mathcal A$ with complex multiplication by $L$ and a Weil $q$-number $\pi \in L$ inducing the Frobenius of the special fiber, and such that:
    \begin{equation*}
    		\left\{
    		\begin{array}{l}
    			v_1(\pi) = 0 \\
    			v_2(\pi) = \frac{1}{g'-1} \\
    			w_1(\pi) = \frac{1}{2} \\
    			w_2(\pi) = \frac{1}{2}.
    		\end{array} \right.
    \end{equation*}
   \end{lemma}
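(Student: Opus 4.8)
The plan is to transcribe the proof of \Cref{abelianscheme_construction} (and of \Cref{nc_abelianscheme}): identify the local structure of $L$ at $p$, build a CM-type $\Phi$ realizing the prescribed valuations via the Shimura--Taniyama formula, invoke \Cref{thm_bourbaki} to obtain the abelian scheme, and read off $\pi$ from \Cref{S-T_formula}.

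First I would list the places of $L$ above $p$. Reading off the diagram of \Cref{nr_diagram}, there are six of them: two complex-conjugate pairs $v_1,\overline{v_1}$ and $v_2,\overline{v_2}$ whose completions are unramified of degree $g'-1$ over $\Qp$ (so $\#H_{v_i}=\#H_{\overline{v_i}}=g'-1$), and a pair $w_1,w_2$ whose completions have degree $2$ (so $\#H_{w_1}=\#H_{w_2}=2$), complex conjugation permuting $\{w_1,w_2\}$. Since $L$ is a CM-field, complex conjugation acts without fixed points on $\Hom(L,\Qpbar)=\bigsqcup_{v\mid p}H_v$, which is precisely the input needed to run Tate's combinatorial construction of CM-types.

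Now set $\#\Phi_{v_1}=0$, $\#\Phi_{v_2}=1$ and $\#\Phi_{w_1}=\#\Phi_{w_2}=1$; the remaining values are forced by $\#\Phi_v+\#\Phi_{\overline v}=\#H_v$, namely $\#\Phi_{\overline{v_1}}=g'-1$ and $\#\Phi_{\overline{v_2}}=g'-2$ (both nonnegative since $g'\geqslant 3$), and $\#\Phi_{\overline{w_i}}=1$. By the combinatorial result in Tate \cite[\S 4]{tat71} there is a CM-type $\Phi$ for $L$ with these intersection numbers. Applying \Cref{thm_bourbaki} produces a $p$-adic field $K$ and an abelian scheme $\mathcal A$ over $\mathcal O_K$ with complex multiplication by $L$ of type $\Phi$; applying \Cref{S-T_formula} produces $\pi\in L$ inducing the Frobenius of the special fibre, with $v(\pi)=\#\Phi_v/\#H_v$ under the normalization $v(q)=1$, $q$ the residue cardinality of $K$. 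This gives $v_1(\pi)=0$, $v_2(\pi)=\frac{1}{g'-1}$ and $w_1(\pi)=w_2(\pi)=\frac12$, as claimed; note that no base change on $K$ is needed here (in contrast with \Cref{relation}), since the statement only concerns these valuations.

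I do not expect a genuine obstacle: the argument is a routine instance of the Honda--Tate machinery already used above. The one point requiring care is the bookkeeping of the local degrees, so that the chosen numerators are integers lying in $[0,\#H_v]$ — in particular that the completion at $v_2$ has degree divisible by $g'-1$ — together with the observation that it is immaterial whether complex conjugation fixes or swaps $w_1$ and $w_2$, since $\#\Phi_{w_1}=\#\Phi_{w_2}=1$ is admissible and compatible with Shimura--Taniyama in either case.
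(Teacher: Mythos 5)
Your proof is correct and matches the paper's own approach, which is merely a cross-reference to \Cref{abelianscheme_construction} and \Cref{nc_abelianscheme}: construct a CM-type via Tate's combinatorial result, invoke \Cref{thm_bourbaki} for the abelian scheme, and read $\pi$ off from Shimura--Taniyama. You spell out the bookkeeping correctly.

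One factual slip, which you already flagged yourself as immaterial: complex conjugation does \emph{not} permute $\{w_1,w_2\}$, it fixes each one. Each $w_i$ lies over a place of the maximal totally real subfield $R$ whose completion is $\Qp$, and there $L/R$ is unramified and nonsplit (completion $\bQ_{p^2}/\Qp$), so the place above is unique and hence self-conjugate; by contrast the $v_i,\overline{v_i}$ lie over the split places of $R$ with completion $W$. This only tightens your argument rather than weakening it: with $\overline{w_i}=w_i$ the constraint $\#\Phi_{w_i}+\#\Phi_{\overline{w_i}}=\#H_{w_i}=2$ forces $\#\Phi_{w_i}=1$, exactly the value you picked. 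The rest of the numerology is right: $\#H_{v_i}=g'-1$ (the sentence just before \Cref{nr_diagram} misprints the degree of $W$ as $g'$, but the diagram and the definition preceding \Cref{ramified_diagram} give $g'-1$, which is what produces $v_2(\pi)=\tfrac{1}{g'-1}$), and your choices $\#\Phi_{v_1}=0$, $\#\Phi_{v_2}=1$ with complements $g'-1$, $g'-2$ are admissible since $g'\geqslant 3$.
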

   \begin{proof}
   		By \cite[\S 4]{tat71}, we can choose a CM-type $\Phi$ for $L$ such that:
    \begin{equation*}
    \left\{
    \begin{array}{l}
    		\#\Phi_{v_1} = 0 \\
    		\#\Phi_{v_2} = 1 \\
    		\#\Phi_{w_1} = 1 \\
    		\#\Phi_{w_2} = 1.
    	\end{array} \right.
    \end{equation*}
    By \Cref{thm_bourbaki}, we have a $p$-adic field $K$ and an abelian scheme $\mathcal A$ over the ring of integers $\mathcal O_K$, with complex multiplication by $L$ of type $\Phi$. Hence by \Cref{S-T_formula}, we have $\pi \in L$ acting as the Frobenius on the special fibre of $\mathcal A$, with valuations prescribed as in the lemma, given in terms of the CM-type $\Phi$.
   \end{proof}
    
	\begin{proof}[Proof of \Cref{com_case}]
    Let $\mathcal A$ and $\pi$ satisfying \Cref{com_abelianscheme}. For any $k >0$, we have that $v_1(\pi^k) \not = \overline {v}_1(\pi^k)$ and $v_1(\pi^k) \not = v_2(\pi^k)$ hence $\pi^k$ is not in a proper subfield of $L$, and $\bQ(\pi^k) = L$. Hence the special fibre $A$ of $\mathcal A$ is a geometrically simple abelian variety of dimension $g$.
    
    Following \Cref{rmk-rank-eigvalues}, to construct submotives of $A$, we have to exhibit non-trivial relations between the conjugates of the Weil $q$-number $\pi$ associated to $A$. The norm 
    \[N_{L/B}(\pi) \in B\]
is a Weil $q$-number, and $B/\bQ$ satisfies the hypothesis of \Cref{easy_case}. Hence there exists an integer $k > 0$ such that $N_{L/B}(\pi^k)$ is a power of $q$. Moreover, $\pi^k$ is the Frobenius endomorphism of the extension of scalars of $A$ to $\bF_{q^k}$, and there is an extension $K'/K$ of $p$-adic fields such that $\mathcal A':= \mathcal A \times_{\mathcal O_K} \mathcal O_{K'}$ has $A\times_{\bF_q} \bF_{q^k}$ as its special fiber. If we replace $\mathcal A$ by $\mathcal A'$, we are still in the setup of \Cref{nc_abelianscheme}. Hence we can assume that 
\[N_{L/B}(\pi) = q^{g'/2}\]
and by transivity of norms we get
\begin{align*}
	N_{L/Q}(\pi) &= q^{g'}, \\
	N_{L/Q'}(\pi) &= q^{g'}.
\end{align*}
These two relations give rise to exotic submotives $M$ and $M'$ in $\frk h(A)$. We now want to prove that $M$ and $M'$ are the only exotic submotives, hence we have to study multiplicative relations between the conjugates of $\pi$. We can apply \Cref{prop_relation} with $Q:=Q$, $R:= B_{\bR}$, $R':= R_0$ because:
	\begin{itemize}
		\item $B_{\bR}$ and $\tilde R_0$ are linearly disjoint,
		\item $N_{L/B}(\pi) = q^{g'/2}$,
		\item No powers of $N_{L/F}(\pi)$ is in $\bQ$. Indeed $V$, $\overline V$ are $p$-adic valuation on $F$ and 
		\[V(N_{L/F}(\pi)) = v_1(\pi) + v_2(\pi) = \frac{1}{g'-1}\]
		which is not $\overline V(N_{L/F}(\pi))=\frac{2g'-1}{g'-1}$. Hence no powers of $N_{L/F}(\pi)$ is stable by complex conjugation.
		\item $\mu_2$ acts $2$-transitively on $\Hom(B_{\bR}, B_{\bR})$,
		\item $g' \geqslant 2$ and $S_{g'}$ acts $2$-transitively on $\Hom(R_0, \tilde R_0) = \{1, \dots, g'\}$.
	\end{itemize} 
As no power of $\pi$ is in $F$, we are in the first case of \Cref{prop_relation}, and we get that the space of relations between the conjugates of $\pi$ is generated by $N_{L/B}(\pi) = q^{g'/2}$, and $\pi  \overline \pi = q$. But $N_{L/B}(\pi)$ is a product of $g'$ conjugates of $\pi$, and $g'$ is odd, hence the associated motive is of odd weight, and it cannot contain Tate classes. Using the knowledge of a set of generators of the relations between the conjugates of $\pi$, and \Cref{exoset_exomot}, we see that  the only relations contributing to exotic Tate classes are 

\begin{minipage}{0.5 \textwidth}
\begin{align*}
	N_{L/Q}(\pi) &= q^{g'} \\
	N_{L/Q}(\overline \pi) &= q^{g'}
\end{align*}
\end{minipage}
\begin{minipage}{0.5 \textwidth}
\begin{align*}
	N_{L/Q'}(\pi) &= q^{g'} \\
	N_{L/Q'}(\overline \pi) &= q^{g'}.
\end{align*}
\end{minipage}
\vspace{2mm}

Complex conjugation swaps $\pi$ and $\overline \pi$, hence the associated motives $M$ and $M'$ are of rank $2$ and $A$ is mildly exotic.
    \end{proof}
    
    \begin{lemma} \label{lemma_real_fields}
    	Let $p$ be a prime number, $g'$ be an integer and $B_{\bR}$ be a quadratic field, there are infinitely many totally real fields $R_0$ of degree $g^{\prime}$, such that the Galois closure $\tilde {R_0}$ is disjoint from $B_{\bR}$, such that $R_0 \otimes \Qp = W \times \Qp$ and with $\Gal(\tilde{R_0}/\bQ) = S_{g'}$.
    \end{lemma}
    \begin{proof}
    	Let $T$ and $C$ be two prime numbers, different from $p$. Let $\ell$ be another prime number which is inert in $B_{\bR}$ and consider 
	\begin{itemize}
		\item $P_{\bR}$ be a polynomial of degree $g'$ in $\bR[X]$ which has only simple real roots,
		\item $P_{p}$ be the minimal polynomial of a primitive element of $W$ the unramified extension of $\Qp$ of degree $g'-1$,
		\item $S$ a polynomial of degree $g'-2$ which is split with simple roots over $\bQ$,
		\item $P_{\text{transp}}$ a polynomial of degree $2$ which is irreducible over $\bQ_{T}$,
		\item $P_{\text{cycle}}$ a polynomial of degree $g'$ which is irreducible over $\bQ_C$,
		\item $P_\ell$ a polynomial of degree $g'$ which is split with simple roots over $\Ql$.
	\end{itemize}
	By the approximation theorem, we have polynomials $P$ of degree $g'$ in $\bQ[X]$ such that $P$ is arbitrarily close to
	\begin{itemize}
		\item $P_{\bR}$ for the archimedean metric,
		\item $P_p \cdot X$ for the $p$-adic metric,
		\item $S \cdot P_{\text{transp}}$ for the $T$-adic metric,
		\item $P_{\text{cycle}}$ for the $C$-adic metric,
		\item $P_\ell$ for the $\ell$-adic metric.
	\end{itemize}
	 The polynomial $P$ is irreducible because $P_{\text{cycle}}$ is, let $R_0$ be the field $\bQ[X]/(P)$. All roots of $P$ are real because $P$ is close to $P_{\bR}$, hence $R_0$ is totally real. By the Chinese remainder theorem, we get $R_0 \otimes \Qp =W \times \Qp$. The fields $B_{\bR}$ and $\tilde {R_0}$ are disjoint because $\ell$ is inert in $B_{\bR}$ and splits completely in $R_0$, hence in $\tilde {R_0}$. Moreover, it follows that the inclusion 
	\[G' = \Gal(\tilde {R_0}/\bQ) \subset S_{g'}\]
	given by the action of $G$ on conjugates of a primitive element is an equality. Indeed $P$ is close to $S \cdot P_{\text{transp}}$ so that $G'$ contains a transposition, and $P$ is close to $P_{\text{cycle}}$ so that $G'$ contains a cycle of length $g'$. A transposition and a cycle generate $S_{g'}$, hence $G'=S_{g'}$.
	
	Moreover when varying $\ell$, we get infinitely many such $R_0$.
    \end{proof}
\printbibliography
\textsc{T. Agugliaro, IRMA, Strasbourg, France}

\texttt{tagugliaro@math.unistra.fr}

\end{document}